\newtheorem{proposition}{Proposition}[section]
\newtheorem{theorem}[proposition]{Theorem}
\newtheorem{corollary}[proposition]{Corollary}
\newtheorem{lemma}[proposition]{Lemma}
\newtheorem*{theorem*}{Theorem}
\newtheorem*{proposition*}{Proposition}
\newtheorem*{lemma*}{Lemma}
\newtheorem*{corollary*}{Corollary}
\newtheorem{question*}{Question}
\newtheorem*{rep@theorem}{\rep@title}
\newcommand{\newreptheorem}[2]{
\newenvironment{rep#1}[1]{
 \def\rep@title{#2 \ref{##1}}
 \begin{rep@theorem}}
 {\end{rep@theorem}}}
\newtheorem*{rep@proposition}{\rep@title}
\newcommand{\newrepproposition}[2]{
\newenvironment{rep#1}[1]{
 \def\rep@title{#2 \ref{##1}}
 \begin{rep@proposition}}
 {\end{rep@proposition}}}
\theoremstyle{definition}
\newtheorem{definition}[proposition]{Definition}
\newtheorem{remark}[proposition]{Remark}
\newcommand{\bdry}{\partial}
\newcommand{\Q}{\mathbb{Q}}
\newcommand{\Z}{\mathbb{Z}}
\newcommand{\R}{\mathbb{R}}
\newcommand{\C}{\mathcal{C}}
\newcommand{\K}{\mathcal{K}}
\newcommand{\A}{\mathcal{A}}
\newcommand{\AC}{\mathcal{AC}}
\newcommand{\Hom}{\operatorname{Hom}}
\newcommand{\rk}{\operatorname{rk}}
\newcommand{\Bl}{\mathcal{B}\ell}
\newcommand{\Qt}{{\Q[t^{\pm 1}]}}
\newcommand{\Zt}{{\Z[t^{\pm 1}]}}
\newcommand{\into}{\hookrightarrow}
\begin{document}
\title[Obstructing Two-torsion in the Rational Knot Concordance Group]{Obstructing Two-torsion in the Rational Knot Concordance Group}

\author{Jaewon Lee}
\address{Department of Mathematical Sciences, Korea Advanced Institute for Science and Technology}
\email{freejw@kaist.ac.kr}

\date{\today}

\def\subjclassname{\textup{2020} Mathematics Subject Classification}
\expandafter\let\csname subjclassname@1991\endcsname=\subjclassname
\subjclass{57K10}

\begin{abstract} 
It is well known that there are many 2-torsion elements in the classical knot concordance group. On the other hand, it is not known if there is any torsion element in the rational knot concordance group $\C_\Q$. Cha defined the algebraic rational concordance group $\AC_\Q$, an analogue of the classical algebraic concordance group, and showed that $\AC_\Q\cong\Z^\infty\oplus\Z_2^\infty\oplus\Z_4^\infty$. The knots that represent 2-torsions in $\AC_\Q$ potentially have order $2$ in $\C_\Q$. In this paper, we provide an obstruction for knots of order $2$ in $\AC_\Q$ from being of finite order in $\C_\Q$. Moreover, we give a family consisting of such knots that generates an infinite rank subgroup of $\C_\Q$. We also note that Cha proved that in higher dimensions, the algebraic rational concordance order is the same as the rational knot concordance order. Our obstruction is based on the localized von Neumann $\rho$-invariant.
\end{abstract}

\maketitle
\section{Introduction}

Two knots in $S^3$ are said to be \textit{concordant} if they cobound a properly embedded locally flat annulus in $S^3\times I$. The concordance classes form an abelian group $\C$ under connected sum, called the \textit{knot concordance group}. The identity element of $\C$ is the concordance class of knots bounding an embedded locally flat disk in $B^4$, called \textit{slice knots}. Since the inverse for a knot $K$ in $\C$ is represented by its reversed mirror image $r\overline{K}$, any non-slice knot $K$ which is negative amphichiral, i.e., $K$ is isotopic to $r\overline{K}$, represents a 2-torsion element in $\C$. There are many such knots like the figure-eight knot, and in fact, we know that $\C$ contains $\Z_2^\infty$ as a subgroup. We make a remark that Gordon \cite[Problem 1.94]{Kir97} conjectured that any 2-torsion element in $\C$ would be concordant to a negative amphichiral knot. Naturally, we can ask if there are any other torsions in $\C$. 

\begin{question*}\cite[Problem 1.32]{Kir97}
  Does $\C$ contain a torsion element of order other than $2$? In particular, does $\C$ contain an order $4$ element?
\end{question*}

A good candidate comes from the \textit{algebraic concordance group} $\AC$. Recall that Levine \cite{Lev69} defined the following surjective homomorphism:
\begin{equation*}
  \phi:\C\twoheadrightarrow \AC
\end{equation*}
and showed that $\AC$ is isomorphic to $\Z^\infty \oplus \Z_2^\infty \oplus \Z_4^\infty$.
Moreover, he proved that for odd higher-dimension, $\phi$ is an isomorphism.

A knot is said to be \textit{algebraically slice} if its image in $\AC$ is trivial. In contrast to higher dimensions, $\ker\phi$ turned out to be nontrivial in the classical dimension, as proved by Casson and Gordon \cite{CG78, CG86}. The provided examples were shown by Jiang \cite{Jia81} to generate an infinite rank subgroup in $\ker\phi\le \C$. An element in $\C$ whose image under $\phi$ has order $4$ is a potential candidate for a 4-torsion in $\C$. Livingston and Naik \cite{LN99, LN01} (see also \cite{JN07, GL21}) proved that certain knots having order $4$ in $\AC$ are of infinite order in $\C$.

In this article, we consider an analogous question for the rational knot concordance group. A knot in $S^3$ is called \textit{rationally slice} if it bounds a locally flat embedded disk in some rational homology $4$-ball. The \textit{rational knot concordance group}, or briefly the rational concordance group, $\C_\Q$ is formed by the quotient of $\C$ modulo rationally slice knots. A negative amphichiral knot is called \textit{strongly negative amphichiral} if the isotopy from $K$ to $r\overline{K}$ is an involution on $S^3$. The figure-eight knot is the simplest nontrivial example and was proved to be rationally slice by Cochran, based on \cite{FS84}. Kawauchi \cite{Kaw09} proved that any strongly negative amphichiral knot is rationally slice.\footnote{Such a knot is smoothly rationally slice, i.e., it bounds a smoothly embedded disk in a rational ball.} Kim and Wu \cite{KW18} showed that negative amphichiral knots satisfying certain properties are (smoothly) rationally slice. It is not known if there exists a negative amphichiral knot which is not rationally slice. This motivates the following question:
\begin{question*}
  Does $\C_\Q$ contain any torsion element? In particular, does $\C_\Q$ contain an order $2$ element?
\end{question*}

Again, a good candidate for 2-torsion comes from the \textit{algebraic rational concordance group} $\AC_\Q$ defined by Cha \cite{Cha07}. He also defined the surjective homomorphism $\phi_\Q:\C_\Q\twoheadrightarrow\AC_\Q$ and proved that 
\begin{equation*}
  \AC_\Q\cong  \Z^\infty \oplus \Z_2^\infty \oplus \Z_4^\infty.
\end{equation*}
We remark that Cha also proved that $\phi_\Q:\C_\Q\twoheadrightarrow\AC_\Q$ is an isomorphism in higher dimensions. As one can observe in the following commutative diagram:
\begin{center}
\begin{tikzcd}
&\C \ar[twoheadrightarrow]{r}{\psi}\ar[twoheadrightarrow]{d}{\phi} &\C_\Q \ar[twoheadrightarrow]{d}{\phi_\Q}\\
&\AC \ar[twoheadrightarrow]{r}{\overline{\psi}} &\AC_\Q
\end{tikzcd}
\end{center}
the knots that represent 2-torsions in $\AC_\Q$ potentially have order $2$ in $\C_\Q$. We obstruct some of them from being actually torsion elements.
\begin{theorem}\label{main A}
There are knots of order $2$ in $\AC_\Q$ which generate $\Z^\infty$ in $\C_\Q$.
\end{theorem}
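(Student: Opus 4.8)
The plan is to build the family by \emph{genetic infection} on a well-chosen seed knot, detecting infinite order by our localized von Neumann $\rho$-invariant, in the spirit of the solvable-filtration constructions of Cochran--Orr--Teichner and Cochran--Harvey--Leidy but carried out with $\Q$-coefficients. It suffices to produce knots $K_1,K_2,\dots$ such that each $K_i$ has order exactly $2$ in $\AC_\Q$ and every nonzero integral combination $\#_i a_iK_i$ has infinite order in $\C_\Q$: the latter forces the homomorphism $\bigoplus_i\Z\to\C_\Q$, $e_i\mapsto[K_i]$, to be injective with torsion-free image, which is precisely the assertion that the $K_i$ generate $\Z^\infty$.

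For the seed, fix a knot $R$ representing an element of order exactly $2$ in $\AC_\Q$; such an $R$ exists since $\AC_\Q\cong\Z^\infty\oplus\Z_2^\infty\oplus\Z_4^\infty$ and $\phi_\Q$ is surjective, and we pick $R$ so that the algebra of its rational Blanchfield form $\Bl_R$ over $\Qt$ is as tractable as possible (in particular $\Bl_R$ is not metabolic but $\Bl_R\oplus\Bl_R$ is). Choose an infection curve $\eta\subset S^3\sm R$ that is unknotted in $S^3$ and lies in the \emph{second} derived subgroup $\pi_1(S^3\sm R)^{(2)}$, taken so that $\eta$ is not annihilated by any metabolizer of the relevant higher-order Blanchfield pairing lying over a metabolizer of $\Bl_R$ --- for instance with the Alexander polynomial $\Delta_\eta$ associated to $\eta$ coprime to $\Delta_R$. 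Because $\eta\in\pi_1(S^3\sm R)^{(2)}$, infecting along $\eta$ leaves the rational Alexander module, the rational Blanchfield form, and (since $\lnk(\eta,R)=0$, by Litherland's formula) the Levine--Tristram signature function unchanged; hence for \emph{any} knots $J_1,J_2,\dots$ the infections $K_i:=R(\eta,J_i)$ all inherit the rational algebraic concordance class of $R$, an element of order $2$ in $\AC_\Q$.

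For the companions, choose $J_i$ whose Levine--Tristram signature integrals $\int_{S^1}\sigma_{J_i}(\omega)\,d\omega$ --- equivalently the Cheeger--Gromov $\rho$-invariants of the $0$-surgeries $S^3_0(J_i)$ --- form a subset of $\R$ that is linearly independent over $\Q$ and, moreover, independent from the bounded (but a priori numerous) metabelian $\rho$-invariants of the seed $R$; this is arranged by a standard recursive construction, e.g.\ with each $J_i$ a suitable connected sum of torus knots. Now suppose $\#_i a_iK_i$ had finite order in $\C_\Q$; then a connected-sum power of it, namely $\#_i(Na_i)K_i$ for some $N\ge 1$, would be rationally slice, so the exterior of a rational slice disk for it is a rational homology $S^1\times D^3$, and after localizing the coefficients our obstruction provides a metabolizer $P$ of the (now metabolic) rational Blanchfield form $\bigoplus\pm\Bl_R$ such that the localized von Neumann $\rho$-invariant vanishes for every admissible character vanishing on $P$. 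By additivity of $\rho$ under connected sum and the Cochran--Orr--Teichner satellite formula, any such $\rho$-invariant is a sum of bounded ``seed'' terms contributed by the copies of $R$ plus a term $\sum_i m_i\!\int_{S^1}\sigma_{J_i}$, the integers $m_i$ being governed by which lifts of $\eta$ the character detects. The choice of $\eta$ guarantees that, for every metabolizer $P$, one can select an admissible character vanishing on $P$ that detects $\eta$ in at least one copy over the largest index with $a_i\ne0$; then $\sum_i m_i\!\int_{S^1}\sigma_{J_i}\ne0$ by $\Q$-linear independence, making the $\rho$-invariant nonzero --- a contradiction. Hence every nonzero combination has infinite order and the $K_i$ generate $\Z^\infty$.

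The hard part is the robustness of this detection: one must show the localized $\rho$-obstruction fires for \emph{every} metabolizer of the orthogonal sum $\bigoplus\pm\Bl_R$ and \emph{every} nonzero $(a_i)$. This splits into (a) showing the curve $\eta$ genuinely survives over every such metabolizer --- a higher-order Blanchfield-duality argument in which the coprimality of $\Delta_\eta$ with $\Delta_R$ and the fine structure of $\Bl_R$ are essential, and which is where the ``localized'' rational coefficient systems are really needed --- and (b) ensuring the unavoidable metabelian contributions of the seed copies cannot conspire to cancel the companion terms, which is why both a careful choice of $R$ and the $\Q$-independence clause on the $J_i$ enter. Verifying the admissibility hypotheses of the $L^{2}$-signature theorem in the localized setting is the remaining technical point, and is exactly what the machinery developed for the localized von Neumann $\rho$-invariant is meant to handle.
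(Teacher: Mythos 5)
Your proposal takes a genuinely different route from the paper, and it contains gaps that prevent it from closing as written. The paper proves Theorem \ref{main A} by applying the first-order localized obstruction (Theorem \ref{main B}) to a concrete family of $n$-twist knots $K_n$ with $n = 36k^2 - 6k + 4$, $k$ odd: these have order $2$ in $\AC_\Q$ by Levine's classification, their Alexander polynomials are strongly irreducible and pairwisely strongly coprime by Bullock--Davis, and $\rho^{(1)}(K_n) \neq 0$ by Davis's computations. The heavy lifting is done at the first order, and the localization and complexity machinery only needs to handle isotropic submodules coming from the ordinary rational Alexander module.

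Your construction instead produces $K_i = R(\eta, J_i)$ by infection on a single seed $R$, all sharing the same rational Alexander module and Blanchfield form as $R$. This immediately disqualifies the paper's Theorem \ref{main B}: its hypothesis of pairwise strong coprimality of the $\Delta_i$ is exactly what makes the localization kill all isotropic submodules of a connected sum, and the paper explicitly warns that a connected sum of $(c,p)$-anisotropic knots need not remain $(c,p)$-anisotropic --- which is precisely the situation when all summands have the same Alexander polynomial. With $\eta \in \pi_1(S^3 \sm R)^{(2)}$, the infection does not change $\rho^{(1)}$ either, so detection must happen at the \emph{second} order. That is a Cochran--Harvey--Leidy-style argument, and adapting it to the rational setting is substantially harder than what the paper's machinery provides: one needs a second-order Blanchfield pairing that interacts correctly with complexity $c$, a proof that for \emph{every} $c$ and every Lagrangian of $\bigoplus \pm \Bl_R$ (over $\A_c$, not $\A$) an admissible character detecting $\eta$ survives, and a bound on the seed contributions uniform in $c$ and in the representation. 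You state all of these as the ``hard part'' but none is supplied, and they are not ``exactly what the machinery of the localized $\rho$-invariant is meant to handle'' --- the paper's localization makes the first-order invariant $c$-independent (Proposition \ref{localization property}); it does nothing of the kind for second-order data. In short, the strategy is coherent and close to the COT/CHL lineage, but it is a much more demanding route than the paper's, and the crucial second-order detection and $c$-uniformity steps remain unproved.
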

\noindent  Theorem \ref{main A} implies a difference between higher dimensional rational concordance groups and the rational concordance group in classical dimension.

To prove Theorem \ref{main A}, we construct a rational slice obstruction by employing the von Neumann $\rho$-invariant. It was developed as a representation invariant of 3-manifolds  \cite{ChG85}. Cochran, Orr, and Teichner \cite{COT03} introduced the $\rho$-invariant of the $0$-surgery of a knot as a knot concordance invariant to find a filtration structure, called \textit{solvable filtration}, on $\C$. Cochran, Harvey, and Leidy \cite{CHL09} defined the \textit{first order signature} $\rho^{(1)}(K)$ for a knot $K$, based on the von Neumann $\rho$-invariant, and used it to prove that each layer of the filtration contains an infinite rank subgroup (see also \cite{COT04, Cha07, Har08, CHL11a, CHL11b, Dav14, Kim20, DPR21}). Using the first order signature, our obstruction can be summarized as follows:
\begin{theorem}\label{main B}
  Let $\{K_i\}$ be a set of knots whose order are finite in $\AC_\Q$ and $\Delta_i$ be the Alexander polynomial of $K_i$. If $\Delta_i$'s are strongly irreducible and pairwisely strongly coprime, and $\rho^{(1)}(K_i)\neq 0$, then $\{K_i\}$ is linearly independent in $\C_\Q$.
\end{theorem}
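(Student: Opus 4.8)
The plan is to adapt the Cochran--Harvey--Leidy higher-order signature machinery to the rational setting, running the argument one localization at a time. Suppose, for contradiction, that a nontrivial rational linear combination $\#_i\, n_i K_i$ bounds a locally flat disk in a rational homology ball; equivalently, there is a bounded rational $(1.5)$-solution $W$ for the $0$-surgery $M$ on the connected sum (after replacing each $K_i$ by the appropriate number of copies, with mirrors absorbing signs). Choose an index $j$ with $n_j \neq 0$ achieving, say, the ``outermost'' Alexander polynomial; the strong coprimality and strong irreducibility hypotheses are precisely what let us isolate the $\Delta_j$-primary part. The first step is to set up the noncommutative coefficient system: build the rational derived series (or the torsion-free derived series, à la Cochran--Harvey) of $\pi_1(M)$, localize $\Z[\pi_1]$ at the relevant polynomial, and produce a PTFA representation $\pi_1(M) \to \Gamma$ through which the $(1.5)$-solution controls the second-order term while a one-dimensional character detects $\rho^{(1)}(K_j)$.

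Next I would run the standard $L^2$-signature computation. Using Cheeger--Gromov boundedness and the additivity/homology-cobordism invariance of $\rho^{(2)}$, the existence of the rational $(1.5)$-solution $W$ forces $\rho^{(2)}(M, \phi) = 0$ for any coefficient system $\phi$ extending over $W$ — here one must check that working over $\Q$ rather than $\Z$ costs nothing, since rational $(1.5)$-solutions still kill the relevant $L^2$-signatures by the usual half-lives-half-dies argument applied with $\Q$-coefficients (this is where Cha's framework for rational concordance enters). On the other hand, a Blanchfield-pairing / linking-form argument over the localized ring shows that the coefficient system can be arranged to restrict on the $K_j$-summand to a character that computes $\pm n_j^2 \cdot \rho^{(1)}(K_j)$ (the square coming from the two copies of the solid-torus-like pieces, or more precisely from the metabolizer being self-annihilating), while the contributions of the other summands $K_i$, $i \neq j$, vanish because their Alexander polynomials are coprime to $\Delta_j$ and hence their Blanchfield pairings are invisible to the $\Delta_j$-localized coefficient system. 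Combining, $0 = \rho^{(2)}(M,\phi) = \sum_i (\text{something}) = \pm n_j^2 \rho^{(1)}(K_j) \neq 0$, a contradiction; so no such combination is rationally slice, which is exactly linear independence in $\C_\Q$.

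The main obstacle I anticipate is the ``isolation'' step: ensuring that in the connected sum the localized coefficient system sees \emph{only} $K_j$. In the integral theory this is handled by the multiplicativity of Alexander modules under connected sum together with strong coprimality, but over $\Q$ one has to be careful that the rational derived series behaves well and that the relevant modules over $\Q[t^{\pm1}]$ (a PID, which helps) still split as the coprimality hypothesis suggests — and crucially that passing to a rational homology ball, rather than $B^4$, does not introduce extra homology that could host an unwanted metabolizer. This is presumably where the hypothesis that the $K_i$ have finite order in $\AC_\Q$ is used: it guarantees the classical (genus-one-type, or at least metabolic) structure on the rational Blanchfield form that is the input to the first-order signature, so that $\rho^{(1)}(K_j)$ is well-defined and the would-be metabolizer over $\Q[t^{\pm1}]$ has the expected size. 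A secondary technical point is verifying the Cheeger--Gromov bound and the vanishing of $\rho^{(2)}$ over a rational, as opposed to integral, homology cobordism — but this should follow from Cha's rational-coefficient version of the $L^2$-signature obstruction, which I would cite rather than reprove.
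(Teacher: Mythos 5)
Your proposal correctly identifies the overall strategy — localize at a polynomial to isolate the $K_j$-contribution, then use $L^2$-signature vanishing for a rational $(1.5)$-solution to obtain a contradiction — but there are two concrete gaps, one of which is the central new difficulty in the rational setting, and one of which is a computational error.

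First, you never address \emph{complexity}. When $K$ is rationally slice, the meridian of $K$ maps to $c$ times a generator of $H_1$ of the slice-disk exterior for some integer $c \geq 1$ (the complexity), and this $c$ can \emph{a priori} be anything. Consequently the relevant coefficient system on $M_K$ factors through the Alexander module with complexity, $\A_c(K) = \A(K)\otimes_c \Qt$, which has order $\Delta_K(t^c)$ rather than $\Delta_K(t)$. Your argument is written as if one is in the integral setting, where there is only one $\Z$-cover to consider, and is silent on how to handle the unbounded family of complexities. The paper's whole point is to build the localized invariant $\rho^{(1)}_{c,p}$ and then show (Proposition \ref{localization property}) that, precisely because one localizes at $p = \Delta_j(t^c)$, the invariant $\rho^{(1)}_{c,p}(K_i)$ equals $\rho^{(0)}(K_i)=0$ for $i\neq j$ and equals $\rho^{(1)}(K_j)$ for $i = j$, \emph{for every} $c$. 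That $c$-independence is what lets a single pair of computations ($\rho^{(0)}$ and $\rho^{(1)}$) obstruct sliceness for all complexities simultaneously. Without some version of this, your argument has a gap: you need to know the obstruction does not degenerate as $c$ varies. (Relatedly, the finite-order-in-$\AC_\Q$ hypothesis is not used to guarantee that $\rho^{(1)}$ is well-defined, as you suggest; it is used to guarantee, via the vanishing of the Levine–Tristram signature function, that $\rho^{(0)}(K_i) = 0$, which is what makes the $i \neq j$ terms drop out.)

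Second, the claimed conclusion $0 = \pm n_j^2\,\rho^{(1)}(K_j)$ is incorrect: the obstruction is linear, not quadratic, in the coefficients. The paper proves additivity of $\rho^{(1)}_{c,p}$ under connected sum (Proposition \ref{additivity}) — this is itself a nontrivial cobordism argument, since $\rho^{(1)}$ alone is not additive, and you should be careful not to assume it for free — and then obtains $\sum_i a_i\,\rho^{(1)}_{c,p}(K_i) = a_j\,\rho^{(1)}(K_j) = 0$, hence $a_j = 0$. There is no self-pairing of a metabolizer that would produce a square; the framing you should have in mind is the signature defect of a cobordism, not a linking-form evaluation. One also needs the anisotropy hypothesis (ensured here by strong irreducibility) not to guarantee a metabolizer exists but rather to guarantee that \emph{no} nontrivial isotropic submodule survives the localization, so that the coefficient system is forced to extend over the slice-disk exterior and the signature-defect computation actually applies.
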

\noindent Notice that the finite order condition in Theorem \ref{main B} does not include the algebraically rationally slice cases because of strong irreducibility of $\Delta_i$, by generalized Fox-Milnor condition \cite{Cha07, CFHH13}.

\begin{figure}[h]
\hspace{\fill}
\includesvg{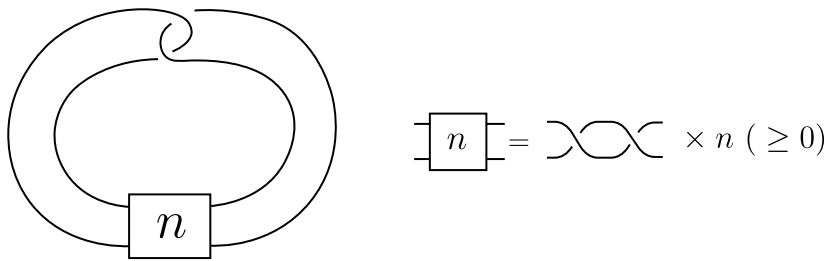}
\caption{$n$-twist knot with a positive clasp.}
\label{twist knot}
\end{figure}

We obtain an infinite family of knots satisfying the condition in Theorem \ref{main B}, thus Theorem \ref{main A} follows. Such knots of finite order in $\AC_\Q$ arise from the $n$-twist knots $K_n$ with nonnegative $n$. The $n$-twist knot is the positive Whitehead double of the unknot with $n$ twists as illustrated in the Figure \ref{twist knot}. Levine \cite{Lev69} classified their algebraic concordance order. Bullock and Davis \cite{BD12} proved that whenever $n$ is not a perfect power and $K_n$ is not algebraically slice, the Alexander polynomials $\Delta_n$ of $K_n$ are strongly irreducible. They also showed that the $\Delta_n$'s are pairwisely strongly coprime. Moreover, Davis \cite{Dav12a, Dav12b} proved that the first order signature $\rho^{(1)}(K_n)$ does not vanish for infinitely many $K_n$ of order $2$ in $\AC$.

\begin{theorem}\label{main C}
$n$-twist knots with order 2 in $\AC_\Q$ generate an infinite rank subgroup in $\C_\Q$.
\end{theorem}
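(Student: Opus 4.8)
The plan is to deduce Theorem~\ref{main C} from Theorem~\ref{main B} by exhibiting an infinite subfamily of the positive-clasp twist knots $K_n$ of Figure~\ref{twist knot} that simultaneously satisfies all four hypotheses of Theorem~\ref{main B}: finite order in $\AC_\Q$, strongly irreducible Alexander polynomial, pairwise strong coprimality of these polynomials, and nonvanishing first order signature. Once such a family $\{K_n\}_{n\in J}$ is produced, Theorem~\ref{main B} makes it linearly independent in $\C_\Q$, so the subgroup it generates is isomorphic to $\Z^\infty$; since these $K_n$ will all have order exactly $2$ in $\AC_\Q$, a fortiori the subgroup of $\C_\Q$ generated by all twist knots of order $2$ in $\AC_\Q$ has infinite rank, which is Theorem~\ref{main C}.

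First I would collect the known computations for twist knots. By Davis's evaluation of first order signatures \cite{Dav12a, Dav12b}, there is an infinite set $A$ of integers $n\ge 0$ such that $K_n$ has order $2$ in $\AC$ and $\rho^{(1)}(K_n)\neq 0$. By Bullock and Davis \cite{BD12}, for every $n$ that is not a perfect power and for which $K_n$ is not algebraically slice, the Alexander polynomial $\Delta_n$ is strongly irreducible, and the $\Delta_n$ for such $n$ are pairwise strongly coprime. Every $n\in A$ gives a knot that is not algebraically slice, since it has order $2$ in $\AC$; and one checks that infinitely many members of Davis's family have $n$ not a perfect power, so $J:=\{\,n\in A : n \text{ is not a perfect power}\,\}$ is infinite. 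This is the index set we use: for all $n\in J$, the knot $K_n$ has order $2$ in $\AC$, $\Delta_n$ is strongly irreducible, the $\{\Delta_n\}_{n\in J}$ are pairwise strongly coprime, and $\rho^{(1)}(K_n)\neq 0$.

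It remains to promote the statement about the $\AC$-order of $K_n$, $n\in J$, to a statement about its $\AC_\Q$-order and then to invoke Theorem~\ref{main B}. Because $\overline{\psi}\colon\AC\onto\AC_\Q$ is a homomorphism, each $K_n$ with $n\in J$ has order dividing $2$ in $\AC_\Q$; in particular it has finite order, which is all Theorem~\ref{main B} requires of the $\AC_\Q$-order. Moreover, strong irreducibility of $\Delta_n$ prevents $K_n$ from satisfying the generalized Fox--Milnor condition \cite{Cha07, CFHH13}, so $K_n$ is not algebraically rationally slice, i.e. it is nontrivial in $\AC_\Q$; hence its order there is exactly $2$. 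Thus $\{K_n\}_{n\in J}$ is a family of twist knots of order $2$ in $\AC_\Q$ whose Alexander polynomials are strongly irreducible and pairwise strongly coprime and whose first order signatures are nonzero, so Theorem~\ref{main B} applies and shows $\{K_n\}_{n\in J}$ is linearly independent in $\C_\Q$. Consequently these knots generate a subgroup of $\C_\Q$ isomorphic to $\Z^\infty$, and Theorem~\ref{main C} follows.

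Since Theorem~\ref{main B} is taken as input there is no deep step; the work is in the bookkeeping, and the point needing the most care is checking that the three ``infinitely many'' conditions — order $2$ in $\AC$, nonvanishing $\rho^{(1)}$, and $n$ not a perfect power — are compatible, i.e. that $J$ is genuinely infinite rather than killed by the intersection. This reduces to examining the explicit family produced in Davis's work. Two further minor compatibility checks are worth noting: that the normalization of the first order signature $\rho^{(1)}$ used by Davis agrees, up to an overall sign that does not affect vanishing, with the one in Theorem~\ref{main B}; and that the twist knots in the conventions of \cite{BD12, Dav12a, Dav12b} coincide with the positive-clasp family $K_n$ of Figure~\ref{twist knot} up to reverse mirror image, which changes neither the $\AC_\Q$-order nor the (non)vanishing of $\rho^{(1)}$.
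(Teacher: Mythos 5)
Your overall strategy is exactly the paper's: assemble an infinite family of twist knots satisfying all hypotheses of Theorem~\ref{main B} and conclude linear independence in $\C_\Q$. The passage from ``order $2$ in $\AC$'' to ``order $2$ in $\AC_\Q$'' via strong irreducibility and the generalized Fox--Milnor condition is also exactly what the paper does. But the step you flag as ``the point needing the most care'' — that $J = \{n \in A : n \text{ not a perfect power}\}$ is infinite — is left as ``one checks,'' and this is a genuine gap: being simultaneously of the form $a^2-a+b^2$ (so $K_n$ is nontrivial in $\AC$), not of the form $m^2-m$ (so $K_n$ is not algebraically slice, hence order $2$), and not a perfect power (so \cite{BD12} applies) is a nontrivial intersection of arithmetic constraints, and nothing in your write-up verifies it is realized infinitely often.

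The paper closes this gap with Proposition~\ref{infinitely exist}: take $n = 36k^2 - 6k + 4$ for odd $k$. Then $n \equiv 1 \pmod 3$, whereas $m(m-1) \equiv 0$ or $2 \pmod 3$, so $K_n$ has order $2$ in $\AC$ by Theorem~\ref{alg order}; and $n = 2(18k^2 - 3k + 2)$ is exactly divisible by $2$ (the second factor is odd), so $n$ is not a perfect power, giving strong irreducibility of $\Delta_n$ by Proposition~\ref{strong irr}. Since Davis's nonvanishing of $\rho^{(1)}(K_n)$ has only finitely many exceptions, throwing those out still leaves an infinite family. Supplying this explicit construction — or some equivalent — is the one piece of actual new content in the proof of Theorem~\ref{main C}, and it is what your argument is missing.
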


It was previously shown by Davis \cite{Dav12a, Dav12b} that $K_n$ which are of order 2 in $\AC$ generate an infinite rank subgroup in $\C$. (See also \cite{Kim05, Ilt22}.) As one can see, the Theorem \ref{main B} covers the case for order $4$ in $\AC_\Q$, but our examples only involve order 2 cases due to the general difficulty in computing $\rho^{(1)}$. We do not know if there is a knot of order $4$ in $\AC_\Q$ that satisfies the conditions of Theorem \ref{main B}.

We make a remark on our twist knot $K_n$. Recall that there are several tools to obstruct rational sliceness. For example, the knot signature and the Levine-Tristram signature function vanish for the rationally slice knots. \cite{CO93, CK02}. Moreover, many Heegaard Floer theoretic invariants such as $\tau$ \cite{OS03, Ras03}, $\epsilon$ \cite{Hom14}, $\nu^+$ \cite{HW16}, and $\Upsilon$ \cite{OSS17} also vanish for the rationally slice knots, as the $1$-surgery of a rationally slice knot bounds a rational ball.

For our $n$-twist knots $K_n$ in target, however, all the aforementioned invariants vanish. \footnote{One can take our $K_n$ with infection by a knot whose $\tau$ is non-zero and use $\Upsilon$ to prove Theorem \ref{main A} in smooth category.} This is because the $n$-twist knots $K_n$ with $n\ge 0$ are $0$-bipolar \cite{CHH13}, and in particular, they are BPH-slice \cite{MP23}. In other words,
\begin{equation*}
  \sigma (K_n) = \sigma_{K_n}(\omega) = \tau(K_n) = \epsilon (K_n) = \nu^+(K_n) = \Upsilon_{K_n} (t) = 0.
\end{equation*}

On the other hand, we introduce the notion of complexity \cite{COT03, Cha07} on the von Neumann $\rho$-invariant to prove Theorem \ref{main C}. The complexity of a rationally slice knot is, roughly speaking, the positive integer $c$ counting with sign how many times meridian meets the slice disk. We can introduce complexity $c$ on Alexander module, denoted as $\A_c$, and define corresponding von Neumann $\rho$-invariant $\rho_c$ to obstruct a knot from being rationally slice with complexity $c$. Since this invariant is defined for each complexity $c$, as a rational slice obstruction, it is necessary to verify that all values $\rho_c$ do not vanish. However, localization on the rational Alexander module at a polynomial $p$, denoted as $\A_{c, p}$, can provide a single value that does not depend on $c$. The main ingredient of our construction involves the localization on Alexander module with complexity and its corresponding $\rho_{c, p}$-invariant.

Note that a knot $K$ is rationally slice with complexity $c$ if and only if the $(c, 1)$-cable $K_{c, 1}$ is strongly rationally slice, i.e., rationally slice with complexity $1$ \cite{CFHH13}. Thus, our $\rho_{c, p}$ for $K$ gives an obstruction for $K_{c, 1}$ from being strongly rationally slice. Moreover,

\begin{corollary}\label{main D}
  Let $K_n$ be an $n$-twist knot with order $2$ in $\AC_\Q$. If $\rho^{(1)}(K_n)\neq 0$, then any cable $(K_n)_{c, 1}$ has infinite order in $\C$.
\end{corollary}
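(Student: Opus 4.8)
The plan is to deduce Corollary \ref{main D} from Theorem \ref{main B} together with the cabling characterization of complexity mentioned just before the statement. Fix an $n$-twist knot $K_n$ of order $2$ in $\AC_\Q$ with $\rho^{(1)}(K_n)\neq 0$, and fix $c\ge 1$; we want to show $(K_n)_{c,1}$ has infinite order in $\C$. The first observation is that if $(K_n)_{c,1}$ had finite order in $\C$, then it would have finite order in $\C_\Q$ as well, since $\psi:\C\to\C_\Q$ is a homomorphism. So it suffices to show $(K_n)_{c,1}$ has infinite order in $\C_\Q$, and then one must rule out the possibility that some nonzero multiple is rationally slice (which is exactly ``infinite order in $\C_\Q$'').

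Next I would verify that $(K_n)_{c,1}$ satisfies the hypotheses of Theorem \ref{main B} as a one-element family. First, the $(c,1)$-cable has the same Alexander polynomial as $K_n$ itself (the $(c,1)$-cabling operation does not change the Alexander polynomial, since the companion contributes $\Delta_{K_n}(t)$ and the pattern is trivial here), so $\Delta_{(K_n)_{c,1}} = \Delta_{K_n}$, which is strongly irreducible by Bullock--Davis \cite{BD12} for the relevant $n$; strong coprimality of a single polynomial with itself is vacuous, so the pairwise condition is trivially satisfied. Second, I must check that $(K_n)_{c,1}$ has finite order in $\AC_\Q$: since $K_n$ has order $2$ in $\AC_\Q$ and $(c,1)$-cabling induces a well-defined map on $\C_\Q$ (indeed it is a satellite operation), the image of $(K_n)_{c,1}$ in $\AC_\Q$ is controlled by that of $K_n$; concretely, the Seifert form of $(K_n)_{c,1}$ is algebraically concordant (rationally) to a form built from that of $K_n$, so its class in $\AC_\Q$ is again $2$-torsion. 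Third, and this is the crux, I must show $\rho^{(1)}((K_n)_{c,1})\neq 0$. Here I would invoke the cabling-versus-complexity principle from \cite{CFHH13}: the first order signature $\rho^{(1)}$ is designed to be insensitive to the complexity $c$, in the sense that the relevant metabolizer-and-representation data for $(K_n)_{c,1}$ pulls back from that of $K_n$, and the $\rho_{c,p}$-formalism developed in the paper shows that $\rho^{(1)}((K_n)_{c,1})$ agrees with (a nonzero rational multiple of, or exactly) $\rho^{(1)}(K_n)$. Since $\rho^{(1)}(K_n)\neq 0$ by hypothesis, we get $\rho^{(1)}((K_n)_{c,1})\neq 0$.

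With all three hypotheses verified, Theorem \ref{main B} applied to the singleton $\{(K_n)_{c,1}\}$ gives that this knot is ``linearly independent'' in $\C_\Q$, i.e. has infinite order in $\C_\Q$, hence infinite order in $\C$. The main obstacle I anticipate is the third step: tracking the behavior of $\rho^{(1)}$ under $(c,1)$-cabling. One must be careful that the Alexander module of the cable, while having the same order $\Delta_{K_n}$, is a genuinely different module (it is $\Qt^{c}$-ish in rank before localization), and one needs the localized invariant $\rho_{c,p}$ precisely to see that the signature data is unchanged; making the identification $\rho^{(1)}((K_n)_{c,1}) = \rho^{(1)}(K_n)$ (or nonvanishing of the former from the latter) rigorous requires the complexity-localization machinery and the naturality of the von Neumann $\rho$-invariant under the covering-space maps induced by the cabling, rather than any new geometric input. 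A secondary, more routine point is confirming that cabling does not accidentally make the knot algebraically rationally slice — but this is ruled out automatically once we know $\Delta_{(K_n)_{c,1}} = \Delta_{K_n}$ is strongly irreducible, via the generalized Fox--Milnor condition \cite{Cha07, CFHH13}, exactly as in the remark following Theorem \ref{main B}.
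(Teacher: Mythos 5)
Your plan---apply Theorem \ref{main B} to the singleton $\{(K_n)_{c,1}\}$ and note that infinite order in $\C_\Q$ implies infinite order in $\C$---is not the route the paper takes, and it contains a factual error. You assert $\Delta_{(K_n)_{c,1}} = \Delta_{K_n}$ on the grounds that ``the companion contributes $\Delta_{K_n}(t)$ and the pattern is trivial.'' This is wrong: by Litherland's formula a satellite with winding number $w$ picks up $\Delta_{\text{companion}}(t^w)$, and the $(c,1)$-cable pattern has winding number $c$, so in fact $\Delta_{(K_n)_{c,1}}(t) = \Delta_{K_n}(t^c)$, exactly as the paper records in the proof of Proposition \ref{c,p-anisotropy condition}. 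Your conclusion about strong irreducibility happens to survive the correction (strong irreducibility of $\Delta_{K_n}$ means every $\Delta_{K_n}(t^{cd})$ is irreducible, hence $\Delta_{K_n}(t^c)$ is itself strongly irreducible), but the wrong identity also props up your $\AC_\Q$-order step, which as written is only the informal remark that ``cabling induces a map on $\C_\Q$''; cabling is not a homomorphism, and the actual reason the class of $(K_n)_{c,1}$ in $\AC_\Q$ agrees with that of $K_n$ is that its rational Blanchfield form is $(\A_c(K_n), \Bl_c)$. The step you flag as the crux, $\rho^{(1)}((K_n)_{c,1}) = \rho^{(1)}(K_n)$, is indeed correct and follows by combining Remark \ref{cable 1} with Proposition \ref{localization property}(b) applied on both sides; you gesture at this without pinning it down.

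The paper proves Corollary \ref{main D} by a more economical route and does not re-verify the hypotheses of Theorem \ref{main B} for the cable. It uses the identity $\rho^{(1)}_{1,p}(K_{c,1}) = \rho^{(1)}_{c,p}(K)$ of Remark \ref{cable 1} to turn the complexity-$c$ obstruction for $K$ into a \emph{strong} rational slice (complexity-$1$) obstruction for $K_{c,1}$ (Remark \ref{cable 2}), and then reruns the argument of Theorem \ref{main B} with that complexity-$1$ obstruction to obtain Corollary \ref{cable 3}: linear independence of the cables $(K_i)_{c,1}$ in $\C$ (in fact in the strong rational concordance group). Corollary \ref{main D} is an immediate specialization. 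Your approach, once the Alexander polynomial formula is corrected and the $\AC_\Q$-order claim is made rigorous via the Blanchfield form identification, would in principle yield the formally stronger conclusion of infinite order in $\C_\Q$; but as written it has a gap at both of those points.
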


On the other hand, $K_n$ when $n$ is a square is algebraically rationally slice, and moreover, $K_1$, which is the figure-eight knot, is rationally slice. It is not known if $(K_1)_{c, 1}$ has infinite order in $\C$. In contrast, in the smooth knot concordance group, it is known that $(K_1)_{c, 1}$ has infinite order for odd $c$ \cite{HKPS22}, for $c=2$ \cite{DKMPS22} (see also \cite{ACMPS23}), and for $c=2n$ when $n$ is odd \cite{KPT24}. In particular, such knots are examples of infinite order rationally slice knots in the smooth category, whereas it is not known if such a knot exists in the topological category. We close the introduction with the following analogous question in the topological category:
\begin{question*}\label{q3}
  Does there exist a rationally slice knot of infinite order in $\C$?
\end{question*}

\noindent\textbf{Organization} In Section 2, we recall definition of von Neumann $\rho$-invariant and its basic properties. After that, we introduce localization and complexity separately on Alexander module and define the corresponding $\rho$-invariants. In Section 3, we define the localized $\rho$-invariant with complexity and check its properties including additivity. In Section 4, we give a constraint for a knot and show that the invariant vanishes for rationally slice knots satisfying the constraint. Then we prove Theorem \ref{main B}. In Section 5, we prove Theorem \ref{main C} by applying Theorem \ref{main B}. We also prove \ref{main D} and ask some questions about rational concordance order of certain knots and concordance order of their cable. In Section 6, we briefly recall the notions of 0-bipolar and BPH-slice, and discuss some relations of 0-bipolar, BPH-slice, and rational slice.\\

\noindent\textbf{Acknowledgement} The author is grateful to his advisor JungHwan Park for generous advice. The author would also like to thank Jae Choon Cha and Taehee Kim for helpful comments. This work is partially supported by Samsung Science and Technology Foundation (SSTF-BA2102-02) and the POSCO TJ Park Science Fellowship.

\section{Background}\label{s-2}
In this section, we first review the definition of von Neumann $\rho$-invariant by using $L^2$-signature defect and see its basic properties. Then we briefly recall some ordinary concordance invariants based on the $\rho$-invariant such as the zeroth and first order signatures. After that, we explain the localization of the Alexander module, the extended Blanchfield form, and the corresponding $\rho$-invariant. At last, we recall the notion of complexity, introduce it on Alexander module and Blanchfield form, and define a rational version of the first signatures. Notice that the last two subsections will be the main ingredients of our construction. We claim no own result in this section. Throughout this section, every manifold is assumed to be compact and oriented, unless otherwise stated.

\subsection{von Neumann $\rho$-invariant}
A von Neumann $\rho$-invariant is an invariant of a 3-manifold $M$ and its representation $\phi:\pi_1(M)\rightarrow \Gamma$ for a discrete group $\Gamma$. It can be obtained by the signature of the bordism class of $(M, \phi)$. Hence, the $L^2$-signature defect (the difference between the $L^2$-signature and the ordinary signature) of a 4-manifold $W$ bounded by $M$ with extension $\psi:\pi_1(W)\rightarrow \Gamma$ of $\phi$, can be used to define von Neumann $\rho$-invariant. For the definition of $L^2$-signature $\sigma^{(2)}$, see \cite{Luc02}, \cite[Section 5]{COT03}, and \cite{Cha08}.

\begin{lemma}\cite[Appendix]{CW03}, \cite[below of Lemma 2.7]{Cha08}\label{existence}
  For a closed, connected 3-manifold $M$ with a representation $\phi:\pi_1(M)\rightarrow \Gamma$, there exists a 4-manifold $W$ bounded by $k$ copies $M_i$ of $M$ for some $k$ and $\psi:\pi_1(W)\rightarrow \Lambda$ for a group $\Lambda$ of which $\Gamma$ is a subgroup such that below diagram commutes for each $i=1,\cdots, k$.
\begin{center}
\begin{tikzcd}
\pi_1(M_i)\ar{r}{\phi}\ar{d} &\Gamma \ar[hook]{r} &\Lambda\\
\pi_1(W)\ar[swap]{urr}{\psi}
\end{tikzcd}
\end{center}
\end{lemma}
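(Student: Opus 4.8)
The plan is to realize $(M,\phi)$ as an element of a bordism group and then to kill that element by replacing $\Gamma$ with a larger, homologically trivial group; this is exactly why the statement allows $\Lambda\supsetneq\Gamma$. Indeed, one cannot take $\Lambda=\Gamma$ in general: a choice of classifying map $r_\phi\colon M\to B\Gamma$ for $\phi$ realizes the pair $(M,\phi)$ as a class $[M,r_\phi]\in\Omega^{SO}_3(B\Gamma)$, and since $\Omega^{SO}_1=\Omega^{SO}_2=\Omega^{SO}_3(\mathrm{pt})=0$ the Atiyah--Hirzebruch spectral sequence identifies $\Omega^{SO}_3(B\Gamma)\cong H_3(\Gamma;\Z)$, a group which is in general nonzero and not even torsion. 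So there is typically no $k$ for which $k$ copies of $M$ bound a $4$--manifold admitting a map to $\Gamma$ compatible with $\phi$.

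To circumvent this, first I would invoke the classical fact that every group embeds into an acyclic group: choose an injection $\iota\colon\Gamma\into\Lambda$ with $\widetilde H_*(B\Lambda;\Z)=0$, and let $j\colon B\Gamma\to B\Lambda$ be a map inducing $\iota$ on fundamental groups. Composing, $j\circ r_\phi\colon M\to B\Lambda$ classifies $\iota\circ\phi$. Now the same spectral sequence, together with $\widetilde H_3(B\Lambda;\Z)=0$, gives $\Omega^{SO}_3(B\Lambda)=0$ (it is harmless to pass between the smooth and topological categories here, since closed $3$--manifolds are smoothable and oriented bordism vanishes in degree $3$ in both). Hence $[M,\,j\circ r_\phi]=0$ in $\Omega^{SO}_3(B\Lambda)$, i.e.\ there is a compact oriented $4$--manifold $W$ with $\partial W=M$ together with a map $G\colon W\to B\Lambda$ whose restriction to $M$ is $j\circ r_\phi$. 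Discarding closed components, I may assume $W$ connected.

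To finish, set $\psi:=G_*\colon\pi_1(W)\to\pi_1(B\Lambda)=\Lambda$. Since $G|_M=j\circ r_\phi$ classifies $\iota\circ\phi$, the composite $\pi_1(M)\to\pi_1(W)\xrightarrow{\psi}\Lambda$ is precisely $\iota\circ\phi$, which is the required commutativity, with $k=1$ and $M_1=M$; the version with $k$ copies then follows by taking $k$ disjoint copies of $W$. (If one prefers to avoid acyclic groups, it suffices to embed $\Gamma$ in a group $\Lambda$ with $H_3(B\Lambda;\Q)=0$: then $[M,j\circ r_\phi]$ is torsion, say of order $k$, in $\Omega^{SO}_3(B\Lambda)\cong H_3(B\Lambda;\Z)$, and $k$ copies of $M$ bound $W$.) The only non-formal ingredient, and hence the hard part, is producing the embedding $\iota$ into an acyclic (or merely rationally acyclic) group; everything else is the vanishing of low--dimensional oriented bordism together with bookkeeping of classifying maps.
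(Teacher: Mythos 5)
Your argument is correct and matches the standard bordism-theoretic proof that the paper's cited sources [CW03, Appendix] and [Cha08] rely on; the paper itself states the lemma without proof. The one non-formal input you correctly isolate --- that every group embeds into an acyclic group (Baumslag--Dyer--Heller / Kan--Thurston) --- is what makes $\Omega^{SO}_3(B\Lambda)\cong H_3(\Lambda;\Z)$ vanish, and your parenthetical remark about rationally acyclic targets is exactly the reason the lemma is phrased with $k$ copies rather than insisting on $k=1$.
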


\begin{definition}
The \textit{von Neumann $\rho$-invariant} for above $(M, \phi)$ is defined by
\begin{equation*}
  \rho(M, \phi) = \frac{1}{k}\left(\sigma^{(2)}(W, \psi) - \sigma(W)\right),
\end{equation*}
where $k$ and $(W, \psi)$ are given in Lemma \ref{existence}.
\end{definition}
\noindent This value is independent of the choice of $(W, \psi)$ by \cite[Lemma 3.6]{Har08} and the following lemma, called \textit{subgroup property}:

\begin{proposition}\cite[Proposition 5.13]{COT03} Given $\phi:\pi_1(M)\rightarrow \Gamma$, if $j:\Gamma\rightarrow \Lambda$ is injective, then
  \begin{equation*}
    \rho(M,j\circ \phi) = \rho(M, \phi).
  \end{equation*}
\end{proposition}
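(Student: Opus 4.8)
The plan is to read off the subgroup property directly from the definition of $\rho$ as an $L^2$-signature defect, reducing the whole statement to the elementary fact that the von Neumann trace is preserved under an inclusion of groups. Write $(M,\phi)$ with $\phi:\pi_1(M)\to\Gamma$, and (renaming the ambient group of Lemma \ref{existence} to avoid a clash with the $\Lambda$ in the statement) choose $k$ and a $4$-manifold $W$ with $\psi:\pi_1(W)\to\Lambda'$, $\Gamma\le\Lambda'$, such that $\bdry W$ is $k$ copies of $M$ and $\psi$ restricts to $\phi$ on each boundary component. The first step is to manufacture, from this \emph{same} $W$, a bounding pair adapted to $j\circ\phi:\pi_1(M)\to\Lambda$. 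Form the amalgamated free product $\widetilde\Lambda:=\Lambda'\ast_\Gamma\Lambda$, using $\Gamma\le\Lambda'$ on one side and $j:\Gamma\hookrightarrow\Lambda$ on the other. By the normal-form theorem for amalgams, both $\Lambda'$ and $\Lambda$ embed in $\widetilde\Lambda$ and their copies of $\Gamma$ coincide there; hence $\iota'\circ\psi$, where $\iota':\Lambda'\hookrightarrow\widetilde\Lambda$, restricts on each boundary component to $j\circ\phi$ followed by $\Lambda\hookrightarrow\widetilde\Lambda$. So the very same $(W,k)$ computes $\rho(M,\phi)$ with ambient group $\Lambda'$ and $\rho(M,j\circ\phi)$ with ambient group $\widetilde\Lambda\supseteq\Lambda$.

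Next I compare the two defects. The ordinary signature $\sigma(W)$ is representation-independent, so it is enough to prove $\sigma^{(2)}(W,\iota'\circ\psi)=\sigma^{(2)}(W,\psi)$. Recall that $\sigma^{(2)}(W,\psi)$ is the von Neumann trace of the difference of the positive and negative spectral projections of the intersection form on the $L^2$-homology $H_2^{(2)}(W;\mathcal N\Lambda')$. An injective homomorphism $\Lambda'\hookrightarrow\widetilde\Lambda$ induces a trace-preserving inclusion $\mathcal N\Lambda'\hookrightarrow\mathcal N\widetilde\Lambda$, and $\ell^2\widetilde\Lambda$ decomposes as an orthogonal sum of copies of $\ell^2\Lambda'$ indexed by the cosets $\Lambda'\backslash\widetilde\Lambda$; consequently induction $\mathcal N\widetilde\Lambda\otimes_{\mathcal N\Lambda'}(-)$ preserves von Neumann dimension and commutes with the functional calculus used to split the intersection form into positive, negative and null parts. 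Hence the signed dimensions, and therefore the $L^2$-signature, are unchanged, giving $\sigma^{(2)}(W,\iota'\circ\psi)=\sigma^{(2)}(W,\psi)$ and
\[
\rho(M,j\circ\phi)=\frac1k\bigl(\sigma^{(2)}(W,\iota'\circ\psi)-\sigma(W)\bigr)=\frac1k\bigl(\sigma^{(2)}(W,\psi)-\sigma(W)\bigr)=\rho(M,\phi).
\]

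The only substantive input is the invariance of $L^2$-signatures under induction along an injective group homomorphism; I would isolate this as a lemma, deducing it either from Lück's dimension theory (faithful flatness of $\mathcal N\widetilde\Lambda$ over $\mathcal N\Lambda'$ in the dimension-theoretic sense) or concretely from the identity $\operatorname{tr}_{\mathcal N\widetilde\Lambda}(p\otimes 1)=\operatorname{tr}_{\mathcal N\Lambda'}(p)$ for a projection $p$. Everything else is formal. The one point requiring care is to avoid circularity: since the present Proposition is itself invoked (together with \cite[Lemma 3.6]{Har08}) to show that $\rho$ is well defined, the argument must be phrased at the level of a fixed bounding pair $(W,\psi)$ rather than of the already-defined numbers $\rho(M,\cdot)$ — which is exactly what the amalgamation trick above accomplishes, by producing a single $W$ that computes both sides.
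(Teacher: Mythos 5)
Your proof is correct. The paper itself gives no argument for this proposition --- it is quoted verbatim as \cite[Proposition 5.13]{COT03} --- so there is no in-text proof to compare against. The substantive input you isolate, namely that $\sigma^{(2)}$ is unchanged when one post-composes the coefficient representation with an injective group homomorphism (via the trace-preserving inclusion $\mathcal{N}\Lambda'\hookrightarrow\mathcal{N}\widetilde\Lambda$ and the coset decomposition of $\ell^2\widetilde\Lambda$), is exactly the $L^2$-induction property that underlies the COT03 proof, so you have not replaced the key lemma but rather correctly identified it. What you add is the amalgam $\widetilde\Lambda=\Lambda'\ast_\Gamma\Lambda$, which manufactures a single bounding pair $(W,k)$ witnessing both numbers simultaneously; this is a clean way to deal with the fact that Lemma \ref{existence} hands you an ambient group $\Lambda'\supseteq\Gamma$ that need not already contain $\Lambda$, and it sidesteps the bootstrap concern you flag (well-definedness of $\rho$ leaning on the very statement being proved) by keeping everything at the level of a fixed $(W,\psi)$. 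Two minor remarks: the normal-form theorem guarantees the two embeddings of $\Gamma$ agree in $\widetilde\Lambda$ only because both $\Gamma\hookrightarrow\Lambda'$ and $j$ are injective --- worth saying explicitly, since injectivity of $j$ is the hypothesis being used; and one could instead enlarge $\Lambda'$ from the outset so that it receives an injection of $\Lambda$ extending $j$, which is essentially the same pushout packaged differently. As written the argument is valid.
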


\noindent The value defined above coincides with the originally defined von Neumann $\rho$-invariant by Cheeger and Gromov \cite{ChG85}, and the following lemma follows.
\begin{lemma}\cite[Remark 5.10]{COT03}\label{rho-sg}
Suppose that closed connected 3-manifolds $M_1, \cdots, M_n$ cobound a 4-manifold $W$ and the following diagram commutes for each $i$:
\begin{center}
\begin{tikzcd}
\pi_1(M_i)\ar{r}{\phi_i}\ar{d} &\Gamma_i \ar{d}{j_i}\\
\pi_1(W)\ar{r}[hook]{\psi}&\Lambda
\end{tikzcd}
\end{center}
If each $j_i$ is injective, then we obtain
\begin{equation*}
  \sum\limits_{i=1}^{n}\rho(M_i, \phi_i) = \sigma^{(2)}(W, \psi) - \sigma(W).
\end{equation*}
\end{lemma}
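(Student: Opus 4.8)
The plan is to cap off $W$ with the auxiliary bounding $4$-manifolds supplied by Lemma~\ref{existence} so as to build a \emph{closed} $4$-manifold, and then to use that on a closed manifold the $L^2$-signature agrees with the ordinary signature; the difference of the two additivity formulas will then collapse to the asserted identity. Concretely, I would first apply Lemma~\ref{existence} to each $(M_i,\phi_i)$, obtaining a $4$-manifold $V_i$ whose boundary is $k_i$ copies of $M_i$ together with a representation $\psi_i\colon\pi_1(V_i)\to\Lambda_i$ with $\Gamma_i\le\Lambda_i$, restricting on each boundary copy to $\phi_i$, so that by definition $\rho(M_i,\phi_i)=\tfrac{1}{k_i}\bigl(\sigma^{(2)}(V_i,\psi_i)-\sigma(V_i)\bigr)$. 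Let $k$ be a common multiple of $k_1,\dots,k_n$.

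The key preliminary step is to produce a single group receiving $\Lambda$ and all the $\Lambda_i$ compatibly: set
\[
\Lambda' := \Lambda *_{\Gamma_1}\Lambda_1 *_{\Gamma_2}\cdots *_{\Gamma_n}\Lambda_n,
\]
formed one amalgamation at a time, where at stage $i$ the subgroup $\Gamma_i$ is mapped into the group built so far via $j_i\colon\Gamma_i\hookrightarrow\Lambda$ and into $\Lambda_i$ via the inclusion from Lemma~\ref{existence}. Since each $j_i$ is injective --- this is exactly where the hypothesis enters --- and $\Gamma_i\le\Lambda_i$, the normal form theorem for amalgamated free products ensures that $\Lambda$ and every $\Lambda_i$ embed in $\Lambda'$ and that the two resulting homomorphisms $\Gamma_i\to\Lambda'$ (through $\Lambda$ and through $\Lambda_i$) coincide. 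By the subgroup property (Proposition~\ref{rho-sg} is proved from it; see \cite[Proposition~5.13]{COT03}) and its analogue for $\sigma^{(2)}$, pushing all target groups forward into $\Lambda'$ changes none of the invariants involved.

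Next I would glue: take $k$ copies of $W$ and, for each $i$, attach $k/k_i$ copies of $V_i$ with reversed orientation along the corresponding $M_i$-boundary components, producing a closed oriented $4$-manifold $X$. On each gluing region both the $W$-side and the $V_i$-side restrict to $\pi_1(M_i)\xrightarrow{\phi_i}\Gamma_i\hookrightarrow\Lambda'$, so the pieces' representations assemble, via van Kampen, into a representation $\Psi\colon\pi_1(X)\to\Lambda'$. Additivity of the signature and of the $L^2$-signature under gluing along boundary components, together with the change of sign under orientation reversal, then give
\begin{align*}
\sigma(X) &= k\,\sigma(W) - \sum_{i=1}^{n}\frac{k}{k_i}\,\sigma(V_i),\\
\sigma^{(2)}(X,\Psi) &= k\,\sigma^{(2)}(W,\psi) - \sum_{i=1}^{n}\frac{k}{k_i}\,\sigma^{(2)}(V_i,\psi_i).
\end{align*}
Since $X$ is closed, Atiyah's $L^2$-index theorem gives $\sigma^{(2)}(X,\Psi)=\sigma(X)$. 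Subtracting the two displayed lines, dividing by $k$, and recognizing $\tfrac{1}{k_i}\bigl(\sigma^{(2)}(V_i,\psi_i)-\sigma(V_i)\bigr)=\rho(M_i,\phi_i)$ yields $\sum_{i=1}^{n}\rho(M_i,\phi_i)=\sigma^{(2)}(W,\psi)-\sigma(W)$, as claimed.

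I expect the only genuine subtlety to be the group-theoretic bookkeeping in constructing $\Lambda'$: one has to arrange a single target group into which $\Lambda$ and all the $\Lambda_i$ embed in a way compatible with the identifications over each $\Gamma_i$, and it is precisely the injectivity of the $j_i$ that makes the iterated amalgamated product accomplish this. Everything else --- the additivity of $\sigma$ and $\sigma^{(2)}$ under boundary gluing and the closed-manifold $L^2$-index theorem --- is standard.
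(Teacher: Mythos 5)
The paper does not give its own proof of Lemma~\ref{rho-sg}; it is cited verbatim from \cite[Remark 5.10]{COT03}. Your reconstruction is correct and is in fact the standard argument underlying that remark (and the Chang--Weinberger appendix it relies on): cap off with the auxiliary bounding $4$-manifolds from Lemma~\ref{existence}, pass to a common target group via iterated amalgamated free products (where injectivity of the $j_i$ is exactly what makes the embedding properties of the amalgam go through), use Novikov additivity for $\sigma$ and its $L^2$-analogue together with orientation-reversal sign changes, and invoke Atiyah's $L^2$-index theorem to kill the closed-manifold defect. The arithmetic at the end checks out, and the appeal to the subgroup property to justify pushing targets forward into $\Lambda'$ is correct. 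The only thing I would flag is that Novikov additivity of $\sigma^{(2)}$ under boundary gluing is itself a nontrivial fact that deserves an explicit citation (it is established, for PTFA and more general coefficient settings, in the references accompanying \cite{COT03} and in Lück's book), but this is a matter of attribution rather than a logical gap.
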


In this paper, we mainly have an interest in a cobordism between 3-manifolds which may not be homeomorphic. Thus, Lemma \ref{rho-sg}, combined with the following lemma, will be frequently used to compute a $\rho$-invariant of 3-manifold. Below proposition immediately comes from the trivial cobordism $M\times I$ by Lemma \ref{rho-sg}.

\begin{proposition}\label{orientation-reversing}
Let $-M$ be the orientation-reversed one of $M$. By abusing of notation, we still use $\phi$ for the induced representation of $-M$. Then,
\begin{equation*}
  \rho(-M, \phi) = -\rho(M, \phi).
\end{equation*}
\end{proposition}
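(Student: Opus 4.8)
The statement is flagged in the text as an immediate consequence of Lemma \ref{rho-sg} applied to the trivial cobordism, so the plan is simply to make that precise. First I would fix orientations: giving $M \times I$ the product orientation, one has $\partial(M \times I) = M \sqcup (-M)$. Thus in Lemma \ref{rho-sg} I take $n = 2$, $W = M \times I$, $M_1 = M$ and $M_2 = -M$. The projection $M \times I \to M$ is a homotopy equivalence, so I may set $\Lambda = \Gamma$ and let $\psi : \pi_1(W) \to \Gamma$ be $\phi$ transported along the induced isomorphism $\pi_1(W) \cong \pi_1(M)$. Each inclusion $M_i \hookrightarrow W$ likewise induces an isomorphism on $\pi_1$, so with $\Gamma_1 = \Gamma_2 = \Gamma$ the maps $j_1 = j_2 = \mathrm{id}_\Gamma$ are injective and the square in the hypothesis of Lemma \ref{rho-sg} commutes, with $\phi_1 = \phi$ and $\phi_2$ equal to the representation induced on $-M$ --- which is precisely the $\phi$ appearing in the statement.

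Next I would check that both signatures of the product cobordism vanish. For the ordinary signature, note that each of the slices $M \times \{1/3\}$ and $M \times \{2/3\}$ is a deformation retract of $M \times I$, so every class in $H_2(M \times I;\R)$ has a representative surface inside $M \times \{1/3\}$ and also one inside $M \times \{2/3\}$; surfaces in these disjoint slices have zero intersection number, so the intersection form on $H_2(M \times I;\R)$ is identically zero and $\sigma(M\times I) = 0$. The same pushing-off-into-slices argument, carried out for the $\Gamma$-equivariant intersection form over the group von Neumann algebra, gives $\sigma^{(2)}(M \times I, \psi) = 0$.

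Plugging these into Lemma \ref{rho-sg} yields
\[
  \rho(M, \phi) + \rho(-M, \phi) = \sigma^{(2)}(M \times I, \psi) - \sigma(M \times I) = 0,
\]
and rearranging gives $\rho(-M, \phi) = -\rho(M, \phi)$. The only point that genuinely needs care is the orientation bookkeeping --- checking that $\partial(M \times I)$ is $M \sqcup (-M)$ rather than two copies of $M$ --- together with the (standard) vanishing of the two signatures of a product cobordism; there is no real obstacle here, which is why the text records the statement as immediate. One may also avoid Lemma \ref{rho-sg} and argue straight from the definition: if $(W, \psi)$ with $\partial W$ equal to $k$ copies of $M$ computes $\rho(M, \phi)$ as in Lemma \ref{existence}, then $-W$ bounds $k$ copies of $-M$ and satisfies $\sigma(-W) = -\sigma(W)$ and $\sigma^{(2)}(-W, \psi) = -\sigma^{(2)}(W, \psi)$, whence $\rho(-M, \phi) = \frac{1}{k}\bigl(\sigma^{(2)}(-W, \psi) - \sigma(-W)\bigr) = -\rho(M, \phi)$.
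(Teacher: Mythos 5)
Your argument is correct and is exactly the one the paper intends: the paper simply records the proposition as immediate from Lemma \ref{rho-sg} applied to the product cobordism $M\times I$, and you have filled in the routine details (orientation of $\partial(M\times I)$, vanishing of $\sigma$ and $\sigma^{(2)}$ of the product, and the commuting diagram). The closing remark giving the alternative direct-from-definition route via $-W$ is also fine.
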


\noindent We close this subsection by introducing a useful lemma to compute $\sigma^{(2)}(W, \phi)$ when $\Gamma$ is a poly-torsion-free-abelian (PTFA) group. Note that all concerned group for $\rho$-invariant in this paper are PTFA. For the definition of PTFA group, see \cite[Definition 2.1]{COT03}. 

\begin{lemma}\label{sg-bound}
Let $\Gamma$ be a PTFA group, and $\K$ be the fraction field of the group ring $\Q\Gamma$. Then
\begin{equation*}
  |\sigma^{(2)}(W,\psi:\pi_1(W)\rightarrow \Gamma)|\le \rk_{\K}H_2(W;\K).
\end{equation*}
In particular, if $H_2(W;\K)=0$, then the $L^2$-signature $\sigma^{(2)}(W, \psi)$ vanishes.
\end{lemma}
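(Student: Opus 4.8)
The plan is to bound the $L^2$-signature by the second $L^2$-Betti number $\beta^{(2)}_2(W):=\dim_{\mathcal{N}\Gamma}H_2(W;\mathcal{N}\Gamma)$, where $\mathcal{N}\Gamma$ is the group von Neumann algebra, and then to identify that $L^2$-Betti number with $\rk_{\K}H_2(W;\K)$. First I would recall that, after the bordism manipulations used to define $\sigma^{(2)}(W,\psi)$, the $L^2$-signature equals the von Neumann signature of the equivariant intersection form $\lambda$ on $H_2(W;\mathcal{N}\Gamma)$; quotienting by the radical $\mathrm{rad}(\lambda)$ produces a nonsingular Hermitian form with the same signature while only decreasing the underlying von Neumann dimension. (Checking that the bordism definition agrees with the intersection-form definition on $W$ itself, and that passing to the nonsingular quotient is harmless, is routine but should be stated.)

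For the first step I would use the standard fact that a bounded self-adjoint form $\lambda$ over $\mathcal{N}\Gamma$ on a finitely generated Hilbert $\mathcal{N}\Gamma$-module $\mathcal{H}$ satisfies $\sigma^{(2)}(\lambda)=\dim_{\mathcal{N}\Gamma}\mathcal{H}_{+}-\dim_{\mathcal{N}\Gamma}\mathcal{H}_{-}$, where $\mathcal{H}_{\pm}$ are the positive and negative spectral subspaces of the associated operator. Since $0\le \dim_{\mathcal{N}\Gamma}\mathcal{H}_{\pm}\le \dim_{\mathcal{N}\Gamma}\mathcal{H}$, this gives $|\sigma^{(2)}(\lambda)|\le \dim_{\mathcal{N}\Gamma}\mathcal{H}$. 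Applying this to $\mathcal{H}=H_2(W;\mathcal{N}\Gamma)/\mathrm{rad}(\lambda)$ yields $|\sigma^{(2)}(W,\psi)|\le \dim_{\mathcal{N}\Gamma}H_2(W;\mathcal{N}\Gamma)=\beta^{(2)}_2(W)$.

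The second and main step is the identification $\beta^{(2)}_2(W)=\rk_{\K}H_2(W;\K)$. Since $\Gamma$ is PTFA, $\Q\Gamma$ is a (left and right) Ore domain, so $\K=\Q\Gamma(\Q\Gamma\setminus\{0\})^{-1}$ is a skew field, flat over $\Q\Gamma$, and there are inclusions $\Q\Gamma\subseteq \K\subseteq \mathcal{U}\Gamma$, where $\mathcal{U}\Gamma$ is the ring of operators affiliated to $\mathcal{N}\Gamma$. I would invoke the (nontrivial) fact that $\mathcal{U}\Gamma$ is flat over $\Q\Gamma$, so that $H_2(W;\mathcal{U}\Gamma)\cong H_2(W;\Q\Gamma)\otimes_{\Q\Gamma}\mathcal{U}\Gamma\cong H_2(W;\K)\otimes_{\K}\mathcal{U}\Gamma$; here $H_2(W;\Q\Gamma)$ is finitely generated because $W$ is compact, and its $\Q\Gamma$-torsion dies upon tensoring with $\K$. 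The von Neumann dimension extends to $\mathcal{U}\Gamma$-modules with $\dim_{\mathcal{N}\Gamma}H_*(W;\mathcal{N}\Gamma)=\dim_{\mathcal{U}\Gamma}H_*(W;\mathcal{U}\Gamma)$, and it is normalized and additive so that $\dim_{\mathcal{U}\Gamma}(V\otimes_{\K}\mathcal{U}\Gamma)=\dim_{\K}V$ for every finite-dimensional $\K$-vector space $V$. Combining these gives $\beta^{(2)}_2(W)=\dim_{\mathcal{U}\Gamma}\bigl(H_2(W;\K)\otimes_{\K}\mathcal{U}\Gamma\bigr)=\dim_{\K}H_2(W;\K)=\rk_{\K}H_2(W;\K)$, which with the first step proves the inequality. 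The final assertion is then immediate: if $H_2(W;\K)=0$ then $\rk_{\K}H_2(W;\K)=0$, forcing $\sigma^{(2)}(W,\psi)=0$.

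I expect the genuine work to sit in the second step, namely the passage between the analytic $L^2$-picture ($\mathcal{N}\Gamma$, von Neumann dimension, $L^2$-Betti numbers) and the algebraic localization picture ($\K$, $\rk_{\K}$). The flatness of $\mathcal{U}\Gamma$ over $\Q\Gamma$ and the compatibility of the various dimension functions are the substantive inputs, and since they are not specific to the knot-theoretic setting I would simply cite Lück's book and the work of Reich (and the treatment in \cite{COT03, Cha08}) rather than reproving them. Everything else is bookkeeping.
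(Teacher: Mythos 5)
Your proof is correct and takes essentially the same approach as the paper's: both bound $|\sigma^{(2)}(W,\psi)|$ by a rank of $H_2$ with twisted coefficients and then identify that rank with $\rk_{\K}H_2(W;\K)$. The paper simply cites \cite{Luc02, COT03} for the signature bound and \cite[Lemma 2.4]{Cha08} for the rank comparison, whereas you unpack the intermediate von Neumann algebra and affiliated-operator machinery that those references encapsulate.
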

\begin{proof}
  By definition of $L^2$-signature \cite{Luc02, COT03}, $|\sigma^{(2)}(W,\psi)|\le\rk_{\Q\Gamma} H_2(W;\Q\Gamma)$. Since $\rk_{\Q\Gamma}H_2(W;\Q\Gamma)=\rk_{\K}H_2(W;\K)$ (for example, see \cite[Lemma 2.4]{Cha08}), the result holds.
\end{proof}

\subsection{Knot invariants based on $\rho$}
The von Neumann $\rho$-invariant of the 0-surgery $M_K$ of a knot $K$ with a specified representation can be adopted as a knot concordance invariant. In this subsection, we recall several invariants defined in such a way.
\begin{definition}\cite[Definition 3.1]{CHL10}
The \textit{zeroth order signature} $\rho^{(0)}(K)$ of a knot $K$ is defined by $\rho(M_K, \phi_0)$ where $\phi_0:\pi_1(M_K)\rightarrow \Z$ sends a meridian to $1$.
\end{definition}
\noindent Note that $\phi$ is just the abelianization, so above $\Z$ is indeed the first homology $H_1(M_K;\Z)$ of $M_K$. Although we will not compute this invariant directly in this paper, we present how this \textit{abelian} invariant can be related with the Levine-Tristram signature function. For simplicity, we normalize $\int_{S^1}d\omega = 1$.
\begin{theorem}\label{thm: zeroth}\cite{COT04}
Let $\sigma_K$ be the Levine-Tristram signature function for a knot $K$. Then,
\begin{equation*}
  \rho^{(0)}(K) = \int_{S^1}\sigma_K(\omega) d\omega.
\end{equation*}
In particular, if $K$ is algebraically slice, then $\rho^{(0)}(K)=0$.
\end{theorem}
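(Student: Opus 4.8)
The plan is to realize $\rho^{(0)}(K)=\rho(M_K,\phi_0)$ as an $L^2$-signature defect and to evaluate it using the fact that, for $\Gamma=\Z$, the group von Neumann algebra $\mathcal{N}(\Z)$ is isomorphic to $L^\infty(S^1)$ with the von Neumann trace given by integration against the normalized Haar measure. Concretely, for any compact $4$-manifold $W$ equipped with a homomorphism $\psi\colon\pi_1(W)\to\Z$ one has
\[
  \sigma^{(2)}(W,\psi)\;=\;\int_{S^1}\sigma_\omega(W)\,d\omega,
\]
where $\sigma_\omega(W)$ is the signature of the Hermitian intersection form on $H_2(W;\CC_\omega)$ and $\CC_\omega$ denotes $\CC$ regarded as a $\pi_1(W)$-module through $\psi$ followed by multiplication by $\omega\in S^1\subset\CC$. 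The function $\omega\mapsto\sigma_\omega(W)$ is a step function with finitely many jumps, so this integral is defined, and at $\omega=1$ it is the ordinary signature, $\sigma_1(W)=\sigma(W)$. Since $\int_{S^1}d\omega=1$, for any such $W$ with $\partial W=M_K$ and $\psi$ extending $\phi_0$ this gives
\[
  \rho^{(0)}(K)\;=\;\int_{S^1}\bigl(\sigma_\omega(W)-\sigma(W)\bigr)\,d\omega.
\]

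To identify the right-hand side with the Levine--Tristram integral, I would take a convenient $W$ built from a genus $g$ Seifert surface $F$ for $K$: the standard $4$-manifold $W_F$ with $\partial W_F=M_K$, obtained by pushing the interior of $F$ into $B^4$ and attaching $2g$ two-handles along a basis of $H_1(F)$ realized by disjoint curves on the normal circle bundle of the pushed-in surface. One then checks the usual properties of $W_F$: $H_1(W_F)\cong\Z$ is generated by the meridian, so $\phi_0$ extends to $\psi\colon\pi_1(W_F)\to\Z$; the ordinary intersection form on $H_2(W_F;\Z)$ is metabolic, so $\sigma(W_F)=0$; and for $\omega$ not a zero of $\Delta_K$ the twisted intersection form on $H_2(W_F;\CC_\omega)$ is isometric to $(1-\omega)V+(1-\overline{\omega})V^{T}$ for a Seifert matrix $V$ of $F$. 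By definition the signature of the latter is the Levine--Tristram signature $\sigma_K(\omega)$, so $\sigma_\omega(W_F)=\sigma_K(\omega)$ almost everywhere, and substituting into the displayed formula yields $\rho^{(0)}(K)=\int_{S^1}\sigma_K(\omega)\,d\omega$.

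For the last assertion, if $K$ is algebraically slice then the Seifert form is metabolic, so by Levine's work $\sigma_K(\omega)=0$ for every $\omega$ which is not a zero of $\Delta_K$; hence the integrand vanishes almost everywhere and $\rho^{(0)}(K)=0$.

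The main obstacle is the $\Z$-coefficient integral formula $\sigma^{(2)}(W,\psi)=\int_{S^1}\sigma_\omega(W)\,d\omega$ from the first paragraph: one must justify measurability and integrability of $\omega\mapsto\sigma_\omega(W)$ (its jumps occur only at the finitely many zeros of an Alexander-type polynomial, and none at $\omega=1$, where $\Delta_K(1)=\pm1$), identify the von Neumann trace with integration on $S^1$ via Fourier analysis, and confirm that the value is independent of the auxiliary manifold $W_F$ — the latter being immediate from the additivity of signature defects recorded in Lemma~\ref{rho-sg} together with the subgroup property, since $\Z\hookrightarrow\Z$ is injective. Once this analytic input is in place, the construction and the computation of the intersection forms of $W_F$ is a routine Kirby-calculus and Mayer--Vietoris exercise, and the algebraically slice case is formal.
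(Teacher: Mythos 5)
Your proposal is correct and reproduces the standard argument used in the cited reference (Cochran--Orr--Teichner 2004): realize $\rho^{(0)}(K)$ as an $L^2$-signature defect, identify $\sigma^{(2)}$ over $\Z$ with the integral of $\CC_\omega$-twisted signatures via the trace on the group von Neumann algebra of $\Z$ viewed as $L^\infty(S^1)$, compute the twisted signatures of a bounding $4$-manifold built from a Seifert surface as the Levine--Tristram signatures, and deduce the algebraically slice case from the vanishing of $\sigma_K$ away from roots of $\Delta_K$. The paper supplies no proof of its own here, only the citation, and your route is essentially the one in that reference.
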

To distinguish non-slice knots among the algebraically slice knots, we need a finer obstruction. Notice that the PTFA group $\Z$, say $\Gamma_0$, used to define $\rho^{(0)}(K)$ is
\begin{equation*}
  \Gamma_0 = \frac{\pi_1(M_K)}{\pi_1(M_K)^{(1)}}.
\end{equation*}
Thus, $\rho^{(0)}(K)$ captures homological information of the infinite cyclic cover of $M_K$, which can be interpreted as
\begin{equation*}
  H_1(M_K;\Z\Gamma_0)\cong \frac{\pi_1(M_K)^{(1)}}{\pi_1(M_K)^{(2)}}.
\end{equation*}
To catch information lying deeper inside such as $\pi_1(M_K)^{(2)}$, one may construct a larger PTFA group $\Gamma_1$ and see the corresponding higher cover. Before going on, we recall the definition of Blanchfield form. From now on, we denote the Alexander module $H_1(M_K;\Qt)$ by $\A(K)$. To avoid confusion, note that $\A(K)$ is induced by the trivial inclusion $\Zt \hookrightarrow \Qt$. Since $\A(K)$ is a torsion $\Qt$-module, the linking form is well-defined.

\begin{definition}\label{def: Bl} \cite{COT03}
The \textit{Blanchfield form} $\Bl:\A(K)\times \A(K)\rightarrow \Q(t)/\Qt$ is defined to make the following diagram commutes:
\begin{center}
\begin{tikzcd}
\A(K)\ar{d}{\cong}[swap]{PD}\ar{r}{x\mapsto\Bl(x,\cdot)} &\Hom_{\Qt}\left(\A(K), \frac{\Q(t)}{\Qt}\right)\\
H^2(M_K;\Qt)\ar{r}[swap]{\cong}{B^{-1}}&H^1\left(M_K;\frac{\Q(t)}{\Qt}\right)\ar{u}{\cong}[swap]{\kappa}
\end{tikzcd}
\end{center}
$PD$ is the Poincaré duality, $B^{-1}$ is the inverse of Bockstein homomorphism, and the $\kappa$ is the Kronecker evaluation map in the universal coefficient spectral sequence. $B$ is isomorphism since $H^2(M_K;\Qt)$ is torsion, and the isomorphism of $\kappa$ comes from the universal coefficient spectral sequence theorem \cite{Lev77}.
\end{definition}

We say a submodule $P\subset\A(K)$ is \textit{isotropic} if $P\subset P^\perp$ with respect to $\Bl$. If $P=P^\perp$, it is called \textit{Lagrangian}.

\begin{definition}\cite[Definition 4.1]{CHL10}
Let $P$ be an isotropic submodule of $\A(K)$. \textit{A first order} $L^2$-\textit{signature} $\rho^{(1)}(K, P)$ is defined by $\rho(M_K,\phi_P)$ where $\phi_P$ is the quotient map by the kernel of:
\begin{equation*}
  \pi_1(M_K)^{(1)}\rightarrow \frac{\pi_1(M_K)^{(1)}}{\pi_1(M_K)^{(2)}}\hookrightarrow \A(K) \rightarrow \A(K)/P.
\end{equation*}
In particular, when $P=0$, by subgroup property,
\begin{equation*}
  \rho(M_K, \phi_P) = \rho\left(M_K,\phi_1:\pi_1(M_K)\rightarrow \frac{\pi_1(M_K)}{\pi_1(M_K)^{(2)}}\right).
\end{equation*}
For this case, we call it by \textit{the first order signature} and write it as $\rho^{(1)}(K)$.
\end{definition}
\noindent Observe that the following sequence is exact.
\begin{equation*}
  0\rightarrow \frac{\pi_1(M_K)^{(1)}}{\pi_1(M_K)^{(2)}} \rightarrow \frac{\pi_1(M_K)}{\pi_1(M_K)^{(2)}} \rightarrow \frac{\pi_1(M_K)}{\pi_1(M_K)^{(1)}} \rightarrow 0
\end{equation*}
Since $\frac{\pi_1(M_K)^{(1)}}{\pi_1(M_K)^{(2)}}$ can be identified with the integral Alexander module $\A^\Z=H_1(M_K;\Zt)$ and $\frac{\pi_1(M_K)}{\pi_1(M_K)^{(1)}}\cong \Z$, our $\Gamma_1=\frac{\pi_1(M_K)}{\pi_1(M_K)^{(2)}}$ can be regarded as $\A^\Z\rtimes \Z$. 

A first order $L^2$-signature is not a concordance invariant by itself, but the set of such values can work as a slice obstruction.
\begin{theorem}\cite[Theorem 4.2, 4.4]{COT03} \cite[Theorem 4.2]{CHL10}\label{CHL-derivative}
If $K$ is slice, then there exists a Lagrangian submodule $P$ of $\A(K)$ such that $\rho^{(1)}(K, P)=0$. For a slice disk $\Delta$, above $P$ is given by
\begin{equation*}
  P = \ker(H_1(M_K;\Qt)\rightarrow H_1(B^4-\Delta;\Qt)).
\end{equation*}
\end{theorem}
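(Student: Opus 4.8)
The plan is to run the classical Cochran--Orr--Teichner argument with the exterior of a slice disk, so assume $K$ bounds a slice disk $\Delta\subset B^4$ and set $W=B^4\smallsetminus\nu(\Delta)$. A Mayer--Vietoris computation applied to $B^4=W\cup\nu(\Delta)$ shows $W$ is an integral homology $S^1\times D^3$ with $H_1(W;\Z)\cong\Z$ generated by the image of a meridian of $K$, and with $\partial W=M_K$. Two consequences will be used: first $H_2(W;\Q)=0$, so the ordinary signature $\sigma(W)=0$; second, by the standard properties of a slice disk exterior \cite{COT03}, $\A^W:=H_1(W;\Qt)$ is a finitely generated torsion $\Qt$-module and $i_*\colon\A(K)=H_1(M_K;\Qt)\to\A^W$ is surjective, equivalently $H_1(W,M_K;\Qt)=0$.

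First I would check that $P:=\ker i_*$ is a Lagrangian of $\Bl$. This is the ``half lives, half dies'' argument: from the long exact sequence of $(W,M_K)$ with $\Qt$-coefficients one has $P=\operatorname{Im}\big(\partial\colon H_2(W,M_K;\Qt)\to H_1(M_K;\Qt)\big)$; Poincar\'e--Lefschetz duality gives $H_2(W,M_K;\Qt)\cong H^2(W;\Qt)$, which the universal coefficient spectral sequence over $\Qt$ expresses through the torsion module $\A^W$. Chasing these identifications shows simultaneously that $\Bl|_{P\times P}=0$ (two classes in $P$ bound in $W$ and $\Bl$ is recovered as the linking-form boundary of the intersection pairing of $W$) and that $P$ and $\A(K)/P$ have the same order, whence $P=P^\perp$.

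Next, and this is the heart of the proof, I would lift $i_*$ to fundamental groups. Let $\Gamma=\pi_1(W)/\pi_1(W)^{(2)}_r$, where $(\,\cdot\,)^{(2)}_r$ denotes the second term of the \emph{rational} derived series; since $H_1(W;\Z)\cong\Z$ is torsion-free, $\Gamma$ is PTFA with $\Gamma/\Gamma^{(1)}\cong\Z$. The inclusion $M_K\hookrightarrow W$ induces $\psi\colon\pi_1(M_K)\to\Gamma$, and the claim is that the representation $\phi_P$ defining $\rho^{(1)}(K,P)$ factors as $\pi_1(M_K)\xrightarrow{\phi_P}\Gamma_P\hookrightarrow\Gamma$ with $\Gamma_P\to\Gamma$ injective, so that $\psi$ extends $\phi_P$ in the sense of Lemma~\ref{rho-sg}. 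Indeed, $\phi_P$ is obtained by killing $P\subset\A(K)$, and $P$ was defined precisely as the kernel of $\A(K)\to\A^W$, through which the relevant quotient of $\pi_1(W)^{(1)}$ maps; tracking derived quotients gives $\ker\phi_P=\pi_1(M_K)\cap\pi_1(W)^{(2)}_r$ and hence the injection $\Gamma_P=\pi_1(M_K)/\ker\phi_P\hookrightarrow\pi_1(W)/\pi_1(W)^{(2)}_r=\Gamma$. This step is the main obstacle: one genuinely needs the rational derived series (the ordinary one need not yield a PTFA group, as $\pi_1(W)^{(1)}/\pi_1(W)^{(2)}$ may have torsion) together with a careful check that $\phi_P$ extends over $W$ with the subgroup property preserved.

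Finally I would compute. Since $M_K=\partial W$ and $\psi$ extends $\phi_P$ along $\Gamma_P\hookrightarrow\Gamma$, Lemma~\ref{rho-sg} gives
\begin{equation*}
  \rho^{(1)}(K,P)=\rho(M_K,\phi_P)=\sigma^{(2)}(W,\psi)-\sigma(W)=\sigma^{(2)}(W,\psi).
\end{equation*}
Letting $\K$ be the fraction field of $\Q\Gamma$, by Lemma~\ref{sg-bound} it suffices to show $H_2(W;\K)=0$. Because $W$ is a rational homology $S^1\times D^3$ and $\psi$ is a nontrivial coefficient system, $H_0(W;\K)=H_1(W;\K)=0$, and $H_4(W;\K)=0$ since $W$ has nonempty boundary; the long exact sequence of $(W,M_K)$ then forces $H_1(W,M_K;\K)=0$, Poincar\'e--Lefschetz duality gives $H_3(W;\K)\cong\overline{H^1(W,M_K;\K)}=0$, and since $\chi(W)=0$ we conclude $H_2(W;\K)=0$. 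Hence $\sigma^{(2)}(W,\psi)=0$, so $\rho^{(1)}(K,P)=0$, completing the proof. The duality bookkeeping in this last step is routine once one is comfortable with the universal coefficient spectral sequence and with the vanishing of $\K$-homology for rational homology $S^1\times D^3$'s.
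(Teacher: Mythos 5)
Your proof is correct, and its overall architecture matches the way the paper handles the rational analogue (Theorem~\ref{CHL-style Q}): take $W=B^4\sm\nu(\Delta)$, identify $P=\ker(\A(K)\to \A^W)$ as Lagrangian via Poincar\'e--Lefschetz duality, extend $\phi_P$ over $\pi_1(W)$ by passing to a quotient by the rational derived series, and compute $\rho(M_K,\phi_P)$ as the signature defect of $W$. The paper itself does not reprove Theorem~\ref{CHL-derivative} — it cites \cite{COT03, CHL10} — and in the proof of Theorem~\ref{CHL-style Q} it likewise outsources the final vanishing to \cite[Theorem~4.2]{COT03}; you instead finish in a self-contained way, showing $H_2(W;\K)=0$ by an Euler-characteristic count together with the vanishing of $H_0,H_1,H_3,H_4$ over $\K$. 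That shortcut is a genuine simplification available here because $W$ is an honest slice-disk exterior rather than merely an $n$-solution (for an $n$-solution one only gets a half-rank isotropic submodule of $H_2(W;\K)$, not vanishing). You also correctly flag the necessity of the \emph{rational} derived series so that $\Gamma=\pi_1(W)/\pi_1(W)^{(2)}_r$ is PTFA and $\pi_1(W)^{(1)}/\pi_1(W)^{(2)}_r$ injects into $H_1(W;\Qt)$, which is exactly what makes $\ker\phi_P=\iota^{-1}(\pi_1(W)^{(2)}_r)$ (write it as a preimage rather than an intersection, since $\iota\colon\pi_1(M_K)\to\pi_1(W)$ need not be injective) and hence $\Gamma_P\hookrightarrow\Gamma$. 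No gaps; this is a clean reproof of the cited result.
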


\begin{theorem}\cite[Proposition 5.8]{CHL09}\label{CHL-Q}
  If $K$ is rationally slice, then one of the first order $L^2$-signatures of $K$ vanishes. In other words, there exists an isotropic submodule $P$ of $\A(K)$ such that $\rho^{(1)}(K, P)=0$.
\end{theorem}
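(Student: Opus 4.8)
The plan is to run the proof of Theorem~\ref{CHL-derivative} with the ambient $B^4$ replaced by a rational homology $4$-ball. Suppose $K$ bounds a locally flat, properly embedded disk $\Delta$ in a rational homology $4$-ball $V$ with $\partial V=S^3$, and set $W=V\sm\nu(\Delta)$. Since the normal framing of $\Delta$ induces the $0$-framing on $K$, we have $\partial W=M_K$. A Mayer--Vietoris computation shows that $W$ is a rational homology circle: $H_1(W;\Q)\cong\Q$, generated by a meridian $\mu$ of $K$, and $H_i(W;\Q)=0$ for $i\ge2$; in particular $\chi(W)=0$, the ordinary signature $\sigma(W)$ vanishes, and $H_1(W;\Qt)$ is $\Qt$-torsion (hence a finite-dimensional $\Q$-vector space). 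The boundary inclusion $M_K\hookrightarrow W$ induces $\pi_1(M_K)\to\pi_1(W)$ compatibly with the abelianizations $\pi_1(M_K)\to\Z$ and $\pi_1(W)\to H_1(W;\Z)\to\Z$, so the $\Qt$-coefficient system on $W$ restricts on $M_K$ to the one defining $\A(K)$.

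I would then take
\[
  P\;=\;\ker\!\big(\A(K)=H_1(M_K;\Qt)\longrightarrow H_1(W;\Qt)\big)
\]
as the required submodule. That $P$ is isotropic follows from the standard fact that the Blanchfield form extends over a $\Qt$-homology null-cobordism: Poincar\'e--Lefschetz duality for $(W,M_K)$ together with the Bockstein produces a pairing on the $\Qt$-homology of $W$ which restricts along $\A(K)\to H_1(W;\Qt)$ to $\Bl$, so classes dying in $W$ pair trivially; as in Definition~\ref{def: Bl} and the proof of Theorem~\ref{CHL-derivative}, this gives $P\subseteq P^{\perp}$. (In the rational-ball setting one extracts only isotropy, which is exactly what the statement asserts; over $B^4$ the same argument yields a Lagrangian.)

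Finally I would bound $\rho(M_K,\phi_P)$ by $W$. Because $H_1(W;\Qt)$ is $\Qt$-torsion (hence torsion-free-abelian as a group), the metabelian group $\Gamma'=H_1(W;\Qt)\rtimes\Z$, with $\Z$ acting by the deck transformation, is PTFA; and the natural map $\psi\colon\pi_1(W)\to\Gamma'$ restricts on $\pi_1(M_K)$, by naturality of Alexander modules, to $\phi_P$ followed by an injection into $\Gamma'$, since $\A(K)/P\cong\im(\A(K)\to H_1(W;\Qt))$ and hence the target of $\phi_P$ embeds in $\Gamma'$. Thus $(W,\psi)$ realizes the hypotheses of Lemma~\ref{rho-sg}, giving $\rho^{(1)}(K,P)=\rho(M_K,\phi_P)=\sigma^{(2)}(W,\psi)-\sigma(W)=\sigma^{(2)}(W,\psi)$. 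Now let $\K$ be the skew field of $\Q\Gamma'$. As $\psi$ has infinite image, $H_0(W;\K)=0$; as $\dim_\Q H_1(W;\Q)=1$, the standard rank estimate for coefficient systems into PTFA groups (see \cite{COT03}) gives $H_1(W;\K)=0$; feeding this into the long exact sequence of $(W,M_K)$ together with $H_0(M_K;\K)=0$ gives $H_1(W,M_K;\K)=0$, so $H_3(W;\K)=0$ by Lefschetz duality; and $H_4(W;\K)\cong H^0(W,M_K;\K)=0$ since $W$ is connected with nonempty boundary. Since $\chi(W;\K)=\chi(W)=0$, these vanishings force $\rk_\K H_2(W;\K)=0$, so $\sigma^{(2)}(W,\psi)=0$ by Lemma~\ref{sg-bound}. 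Hence $\rho^{(1)}(K,P)=0$.

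The one place genuine care is needed --- and the step I expect to be the crux --- is the homological input that $W$ is a rational homology circle, so that $H_1(W;\Qt)$ is $\Qt$-torsion and $\Gamma'=H_1(W;\Qt)\rtimes\Z$ is actually PTFA. This is exactly where the hypothesis ``rational homology $4$-ball'' (rather than merely a $4$-manifold bounded by $S^3$) is used, and it is also why one works with the rational Alexander module $\A(K)$, induced via $\Zt\hookrightarrow\Qt$, rather than the integral one: the latter can carry $\Z$-torsion, which would destroy the PTFA condition. Granting this, the rest is a routine transcription of the slice-disk case.
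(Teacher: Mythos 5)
Your overall plan --- take $W=V\sm\nu(\Delta)$, set $P=\ker\bigl(\A(K)\to H_1(W;\Qt)\bigr)$, show $P$ is isotropic, and bound $\rho^{(1)}(K,P)$ by the signature defect of $W$ --- is correct and is essentially what the paper's own Theorem~\ref{CHL-style Q} carries out. But there is a genuine gap at the sentence ``so the $\Qt$-coefficient system on $W$ restricts on $M_K$ to the one defining $\A(K)$.'' In the rational-ball setting the meridian $[\mu_K]\in H_1(M_K;\Z)$ maps to $c$ times a generator of $H_1(W;\Z)$ modulo torsion for some integer $c\ge 1$ (the \emph{complexity}; already for the figure-eight knot one has $c=2$). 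Hence the coefficient system on $W$ pulls back on $M_K$ to the one defining $\A_c(K)=\A(K)\otimes_c\Qt$, not $\A(K)$, and the induced map $\A(K)\to H_1(W;\Qt)$ is only $(t\mapsto t^c)$-semilinear, not $\Qt$-linear. Everything you deduce from this false compatibility --- that $P$ is a $\Qt$-submodule, that the linking pairing of $W$ restricts to $\Bl$ so $P$ is isotropic, that $\psi|_{\pi_1(M_K)}$ equals $\phi_P$ followed by an injection --- therefore needs re-justification, and your parenthetical remark about why one gets ``only isotropy'' does not identify the actual mechanism. The PTFA/rational-homology-circle point you singled out as the crux is necessary but routine; the complexity $c$ is where the rational-ball case truly departs from $B^4$.

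The repair is exactly what the paper's Theorem~\ref{CHL-style Q} does: pass to $\A_c(K)$, where $j_c\colon\A_c(K)\to H_1(W;\Qt)$ is honestly $\Qt$-linear and $P_c=\ker j_c$ is Lagrangian with respect to $\Bl_c$. Then observe that $\A(K)\hookrightarrow\A_c(K)$ via $x\mapsto x\otimes 1$, that $\Bl_c(x\otimes1,y\otimes1)=h(\Bl(x,y))$ with $h\colon\Q(t)/\Qt\to\Q(t)/\Qt$, $t\mapsto t^c$, injective, and hence that $P=\{x\in\A(K):x\otimes1\in P_c\}$ is a $\Qt$-submodule (since $x\mapsto 0$ forces $tx\mapsto t^c\cdot 0=0$) which is isotropic for $\Bl$; in general it is \emph{only} isotropic, since $\A(K)$ is a proper $\Qt$-submodule of $\A_c(K)$ when $c>1$. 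Similarly, the image of $\pi_1(M_K)$ in $\Gamma'=H_1(W;\Qt)\rtimes\Z$ lands in the index-$c$ subgroup $H_1(W;\Qt)\rtimes c\Z$, still with kernel exactly $\ker\phi_P$, so the subgroup property applies and the rest of your $\chi$-and-$\sigma^{(2)}$ computation goes through unchanged.
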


\subsection{Localized $\rho$-invariant}\label{ss-2.3} From the perspective of obstructing a knot $K$ from being slice, it is required to check that for all isotropic submodule $P\subset \A$, $\rho^{(1)}(K, P)\neq 0$. Instead of looking through all such a submodule, one strategy is to impose a strong condition for Alexander module $\A(K)$ and check a single $\rho$-invariant. A localization technique fits well for this strategy. This subsection is based on the work by Davis \cite{Dav12b}. We assume that every polynomial $p$ be a symmetric Laurent polynomial with rational coefficients.

\begin{definition}
Let $R_p=\left\{\frac{f}{g}\in\Q(t): (p, g)=1, f, g\in \Qt\right\}$. The \textit{localized Alexander module at} $p$ is
\begin{equation*}
  \A_p(K) = \A(K)\otimes R_p,
\end{equation*}
where the $\Qt$-tensor product is the usual multiplication.
\end{definition}

\begin{definition}
Let $\pi_1(M_K)^{(2)}_p$ be the kernel of the following map:
\begin{equation*}
  \pi_1(M_K)^{(1)}\rightarrow\frac{\pi_1(M_K)^{(1)}}{\pi_1(M_K)^{(2)}}=H_1(M_K; \Z[t^{\pm 1}])\into \A(K)\rightarrow \A(K)\otimes R_p=\A_p(K).
\end{equation*}
The \textit{localized $\rho$-invariant of $K$ at $p$} is defined as:
\begin{equation*}
  \rho_p^{(1)}(K) = \rho\left(M_K, \phi_p:\pi_1(M_K)\rightarrow \frac{\pi_1(M_K)}{\pi_1(M_K)^{(2)}_p}\right).
\end{equation*}
\end{definition}

\noindent After localization at $p$, any submodule of $\A(K)$ whose order is relatively prime with $p$ is annihilated in $\A_p(K)$. Then we may expect some isotropic submodules to be killed. Since $p$ is assumed to be symmetric, the Blanchfield form $\Bl$ naturally extends to $\Bl_p$ on $\A_p(K)$ as
\begin{equation*}
  \Bl_p(x\otimes f(t), y\otimes g(t)) = f(t)\cdot \Bl(x, y)\cdot g(t^{-1}).
\end{equation*}

\begin{definition}
A knot $K$ is \textit{$p$-anisotropic} if $\A_p(K)$ has no nontrivial isotropic submodule with respect to $\Bl_p$.
\end{definition}
\noindent Now $p$-anisotropy condition for $K$ makes $\rho_p^{(1)}(K)$ be enough to obstruct $K$ from being slice. Moreover,
\begin{theorem}\cite[Theorem 4.1]{Dav12b}\label{loc obs}
  Let $K_1, \cdots, K_n$ be $p$-anisotropic knots. If $K=K_1\# \cdots \# K_n$ is slice, then $\rho_p^{(1)}(K)=0$.
\end{theorem}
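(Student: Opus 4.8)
The plan is to follow the standard Cheeger--Gromov bordism strategy used for first-order signatures, but with the localized coefficient system at $p$ and keeping careful track of ranks of second homology over the group von Neumann algebra. Suppose for contradiction that $K=K_1\#\cdots\#K_n$ is slice, with slice disk $\Delta\subset B^4$ and exterior $X=B^4\smallsetminus\nu\Delta$. The boundary of $X$ is $M_K$, and $M_K$ is homology-cobordant (rel nothing) to the disjoint union $M_{K_1}\sqcup\cdots\sqcup M_{K_n}$ via a standard cobordism $C$ built from the connected-sum; glue $C$ to $X$ along $M_K$ to obtain a $4$-manifold $W$ with $\partial W = M_{K_1}\sqcup\cdots\sqcup M_{K_n}$. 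The first step is to produce the right PTFA group: the quotient $\Gamma = \pi_1(W)/\pi_1(W)^{(2)}_{p}$, where the subscript $p$ means we further quotient the metabelian piece $\pi_1(W)^{(1)}/\pi_1(W)^{(2)}$ by the torsion relatively prime to $p$, exactly as in the definition of $\rho^{(1)}_p$. One checks, using the Fox-Milnor--type factorization of the Alexander polynomial of a slice knot and the half-lives-half-dies principle applied to $\Bl$, that the inclusion $M_{K_i}\hookrightarrow W$ induces on the relevant metabelian quotients a map whose image, after localizing at $p$, is controlled by the isotropic submodule $P_i=\ker(\A_p(K_i)\to \A_p(K_i)\otimes\text{(coeff over }W))$; since each $K_i$ is $p$-anisotropic, $P_i=0$, so the map $\A_p(K_i)\to$ (the metabelian layer of $\Gamma$) is injective, and hence the coefficient system restricted to each boundary piece is exactly $\phi_p$ for $K_i$ (up to the subgroup property).

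Granting this, Lemma \ref{rho-sg} gives
\begin{equation*}
  \sum_{i=1}^n \rho^{(1)}_p(K_i) = \sigma^{(2)}(W,\psi) - \sigma(W),
\end{equation*}
and since $\rho^{(1)}_p$ is additive under connected sum (this is where one uses that the connected-sum cobordism contributes nothing, a direct computation with $C$), the left side is $\rho^{(1)}_p(K)$. It remains to show the right side vanishes. The ordinary signature $\sigma(W)$ is zero because $W$ is built from a rational (in fact integral) homology $4$-ball-type piece glued to a homology cobordism: $H_2(W;\Q)=0$. For the $L^2$-signature, by Lemma \ref{sg-bound} it suffices to show $H_2(W;\K)=0$, where $\K$ is the fraction field of $\Q\Gamma$. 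Here one runs the usual argument: the slice exterior $X$ satisfies $H_1(X;\Q(t))=0$ and more, and a Mayer--Vietoris / homology-with-PTFA-coefficients computation (using that $P_i=0$ so no extra homology is created on the boundary) forces $H_2(W;\K)=0$. Thus $\sigma^{(2)}(W,\psi)=0$, so $\rho^{(1)}_p(K)=0$, as claimed.

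The main obstacle is the middle step: verifying that the coefficient system $\psi:\pi_1(W)\to\Gamma$ restricts on each boundary component $M_{K_i}$ to (a conjugate/subgroup-equivalent copy of) $\phi_p$ for $K_i$, and simultaneously that $H_2(W;\K)=0$. Both hinge on understanding $H_1(W;\Qt)$ after localization at $p$: one must show that the image of $\A_p(K_i)$ in the $W$-side metabelian module is \emph{injective}, which is precisely where $p$-anisotropy is used --- the kernel $P_i$ would be an isotropic submodule of $\A_p(K_i)$ (by the standard duality/half-lives-half-dies argument applied to the Blanchfield pairing of the rational homology ball bounded by $M_K$), hence is forced to be $0$. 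Once injectivity on the boundary is established, the vanishing of the higher homology over $\K$ follows from the same exact-sequence bookkeeping used in \cite{COT03, CHL09, Dav12b}; the only genuinely new point over the classical case is checking that localization at $p$ is compatible with all these maps, which is routine since $R_p$ is flat over $\Qt$ and $p$ is symmetric so $\Bl_p$ is well-defined.
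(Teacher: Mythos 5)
This is a cited result (Davis \cite{Dav12b}), not one the paper re-proves; the paper's closest in-text argument is for the $(c,p)$-analogue, Theorem \ref{main obstruction}, and your proposal follows exactly that strategy: glue the slice exterior to the connected-sum cobordism, use $p$-anisotropy together with isotropy of the inclusion kernel (the role of Lemma \ref{ker=isotropic}) to get injectivity on the metabelian quotients, invoke Lemma \ref{rho-sg} and additivity (Proposition \ref{additivity}), and kill the signature defect via Lemma \ref{sg-bound}.

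One inaccuracy: your claim that $H_2(W;\Q)=0$ is false. Each $0$-surgery boundary component contributes a $\Q$ to $H_2$, so $H_2(W;\Q)$ is generally nonzero. The correct reason that $\sigma(W)=0$ — and the one the paper uses for $X$ — is that the boundary inclusion $H_2(\partial W;\Q)\to H_2(W;\Q)$ is surjective, so every class in $H_2(W;\Q)$ is carried by $\partial W$ and the intersection form is identically zero. This is a fixable slip, not a structural gap; with that correction the argument matches the paper's.
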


\noindent Note that even if each $K_i$ is $p$-anisotropic, the connected sum $K_1\#\cdots \# K_n$ may not be $p$-anisotropic.

The vanishing of the signature defect of the slice disk complement bounded by $M_K$ is anticipated. The essence of localization and $p$-anisotropy is that we expect all possible Lagrangian (or generally isotropic) submodule of $\A(K)$ to be killed in $\A_p(K)$. The key role of $p$-anisotropy is to make $\A_p(K)$ inject to $\A_p(W)= H_1(W;\Qt)\otimes R_p$ so that $\rho^{(1)}_p(K)$ can be well-defined as the signature defect of $W$. Then it works well as a single-valued concordance invariant, without computing all $\rho^{(1)}(K, P)$ for all isotropic submodule $P$. One trade-off is that $\rho_p^{(1)}(K)$ can only be applied to the subgroup generated by the $p$-anisotropic knots. Before moving on to the next subsection, we give a sufficient condition for $p$-anisotropy.
\begin{proposition}\label{p-anisotropy condition}
If each factor of $p$ is symmetric and divides the Alexander polynomial of $K$ with multiplicity at most 1, then $K$ is $p$-anisotropic.
\end{proposition}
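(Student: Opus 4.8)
The plan is to use the two hypotheses to pin down the $R_p$-module structure of $\A_p(K)$: it will turn out to be a finite direct sum of \emph{simple} $R_p$-modules that are pairwise orthogonal under $\Bl_p$, and a nonsingular form on such a module admits no nontrivial isotropic submodule. First I would recall that $R_p$, being a localization of the PID $\Qt$, is itself a PID, and its nonzero prime ideals are exactly the $(q)$ with $q$ an irreducible factor of $p$. The order of $\A(K)$ over $\Qt$ is the Alexander polynomial $\Delta_K$, so the order of $\A_p(K)=\A(K)\otimes R_p$ over $R_p$ is $\Delta_K$ with every factor coprime to $p$ inverted; by hypothesis each irreducible factor of $p$ that survives divides $\Delta_K$ with multiplicity exactly $1$, so this order is \emph{squarefree}. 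Taking the primary decomposition then gives $\A_p(K)\cong\bigoplus_{i=1}^{m}R_p/(q_i)$ for distinct irreducible factors $q_i$ of $p$ (each symmetric, by the other hypothesis), and each $R_p/(q_i)$ is a field, hence a simple $R_p$-module.

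Next I would verify two properties of $\Bl_p$. (i) \emph{Nonsingularity}: since $\A(K)$ is finitely presented over $\Qt$ and $\Bl$ is nonsingular, localizing the adjoint isomorphism $\A(K)\cong\Hom_{\Qt}\!\big(\A(K),\Q(t)/\Qt\big)$ at the multiplicative set defining $R_p$ yields the adjoint isomorphism for $\Bl_p$ (compare \cite{Dav12b}). (ii) \emph{Block-diagonality} with respect to the decomposition above: for $x\in R_p/(q_i)$ and $y\in R_p/(q_j)$ with $i\ne j$ one has $q_i\,\Bl_p(x,y)=\Bl_p(q_i x,y)=0$ and, using that $q_j$ is symmetric so that $\overline{q_j}=q_j$, also $q_j\,\Bl_p(x,y)=\Bl_p(x,q_j y)=0$; since $(q_i,q_j)=1$ in $R_p$ this forces $\Bl_p(x,y)=0$.

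To finish, I would take an arbitrary submodule $L\subseteq\A_p(K)=\bigoplus_i R_p/(q_i)$. Because the summands have pairwise coprime orders, $L=\bigoplus_i\big(L\cap R_p/(q_i)\big)$, and each $R_p/(q_i)$ being simple, $L\cap R_p/(q_i)$ is either $0$ or all of $R_p/(q_i)$. If $L$ is isotropic then, by block-diagonality, $\Bl_p$ restricted to every summand that $L$ meets must vanish identically; but that restriction is a diagonal block of a nonsingular block-diagonal form, hence nonsingular on the nonzero module $R_p/(q_i)$ — a contradiction. Therefore $L=0$, i.e.\ $K$ is $p$-anisotropic. I do not anticipate a serious obstacle: the content is the observation that squarefree order forces semisimplicity and that a nonsingular form on a semisimple module with simple summands of distinct types is anisotropic. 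The only places demanding care are checking that localization preserves nonsingularity of the Blanchfield pairing and, more pointedly, that the symmetry of the factors of $p$ is genuinely needed — it is exactly what makes the primary components of $\A_p(K)$ mutually orthogonal and thereby reduces anisotropy to the one-dimensional pieces.
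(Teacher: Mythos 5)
Your proof is correct, and since the paper itself states Proposition~\ref{p-anisotropy condition} without proof (deferring to the cited work of Davis), there is no in-paper argument to compare against; your reasoning supplies the standard one. The structure is sound: $R_p$ is the semi-local PID obtained by inverting everything coprime to $p$, the multiplicity hypothesis forces the $R_p$-order of $\A_p(K)$ to be squarefree so that $\A_p(K)\cong\bigoplus_i R_p/(q_i)$ with each summand a simple $R_p$-module, flatness of $R_p$ plus finite presentation of $\A(K)$ transports nonsingularity of $\Bl$ to $\Bl_p$, and the symmetry hypothesis $\overline{q_j}\doteq q_j$ combined with coprimality gives pairwise orthogonality of the primary blocks, after which any isotropic submodule, being a sum of full primary blocks, would force a nonsingular diagonal block to vanish. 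You correctly flag the two places where the hypotheses are genuinely used: multiplicity $\le 1$ for semisimplicity, and symmetry for block-orthogonality (without it, a pair $q,\overline q$ with $q\ne\overline q$ could pair the blocks $R_p/(q)$ and $R_p/(\overline q)$ against each other, making each individually isotropic). One minor point worth a sentence if you were to write this up: the claim $L=\bigoplus_i\bigl(L\cap R_p/(q_i)\bigr)$ is the decomposition of a submodule into its primary components, valid because the summands are primary for pairwise-coprime primes; it is not automatic for an arbitrary direct sum decomposition.
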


\subsection{$\rho$-invariant with complexity}\label{ss-2.4} Now suppose that $K$ is slice in a rational homology ball $V$ via a disk $\Delta^2$. We can consider the complexity of $c$ from the injection: 
\begin{equation*}
  H_1(M_K;\Z)\xrightarrow{\times c} H_1(V- \Delta;\Z)/torsion.
\end{equation*}
Sometimes we just say a boundary component $M$ of a 4-manifold $W$ has complexity $c$ if 
\begin{equation*}
  H_1(M;\Q)\cong H_1(W;\Q)\cong \Q
\end{equation*}
and the inclusion induces multiplication by $c$ on the free part of first integral homology. Note that $c$ can be always adjusted to be positive, so we assume that $c>0$. Complexity can be introduced on Alexander module and the Blanchfield form, and then the corresponding $\rho$-invariant can be defined. Readers can find this notion in \cite{Cha07}, and also in \cite{COT03} with different terminology ``multiplicity''.

\begin{definition}
The \textit{Alexander module with complexity} $c$ is defined as:
\begin{equation*}
  \A_c (K) = \A(K) \otimes_c \Qt,
\end{equation*}
where the $\Qt$-tensor product $\otimes_c$ is given by the right action of $t\in\Qt$ as multiplication by $t^c$.
\end{definition}

\noindent In the sense that $\A (K)$ is the first homology with rational coefficient of the infinite cyclic cover corresponding $\pi_1(M_K)\xrightarrow{\phi_0} \Z=\langle t \rangle$, we can regard $\A_c (K)$ as the one similarly obtained from the top row composition:

\begin{center}
\begin{tikzcd}
  \pi_1(M_K)\ar{r}{\phi_0}\ar{d} &\Z \ar{r}{t\mapsto t^c} &\Z\\
\pi_1(W)\ar{urr}
\end{tikzcd}
\end{center}
Thus, if $M_K$ bounds $W$ with complexity $c$, then the coefficient system extends to $\pi_1(W)\rightarrow \Z$ as described in above diagram so that the inclusion induces the natural $\Qt$-module homomorphism:
\begin{equation*}
  j_c: \A_c(K)\rightarrow \A(W)=H_1(W;\Qt).
\end{equation*}
Notice that if $\Delta_K(t)$ is the Alexander polynomial, then the order of $\A_c(K)$ is $\Delta_K(t^c)$ and the map $\A(K)\rightarrow \A_c(K)$ sends $t$ to $t^c$.

The Blanchfield form $\Bl_c$ on $\A_c(K)$ is defined similarly as Definition \ref{def: Bl}, but equivalently, can be also defined as a natural extension of the ordinary $\Bl$.
\begin{proposition} \cite{Cha07}
Let $x, y\in \A(K)$ and $f(t), g(t)\in \Qt$. Then the rational Blanchfield form $\Bl_c$ on $\A_c(K)$ extends $\Bl$ in the sense that:
\begin{equation*}
  \Bl_c (x\otimes f(t), y\otimes g(t)) = f(t)\cdot h(\Bl(x, y))\cdot g(t^{-1})
\end{equation*}
where $h:\frac{\Q(t)}{\Qt}\xrightarrow{t\mapsto t^c}\frac{\Q(t)}{\Qt}$.
\end{proposition}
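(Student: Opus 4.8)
The plan is to recognize $\Bl_c$ as the extension of scalars of the ordinary Blanchfield form $\Bl$ along the ring inclusion $\Qs\into\Qt$, $s\mapsto t^c$, and then read off what such an extension does to a sesquilinear pairing.

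First I would record the homological meaning of $\A_c(K)$. Let $\phi_c$ be the composite $\pi_1(M_K)\xrightarrow{\phi_0}\Z\xrightarrow{t\mapsto t^c}\Z=\langle t\rangle$, so that the meridian acts on $\Qt$ by multiplication by $t^c$. This action is $\Q[t^{\pm c}]$-linear, and on the sub-local-system $\Q[t^{\pm c}]\subset\Qt$ it is the usual ``meridian acts by the variable'' action; hence, as $\pi_1(M_K)$-modules, $\Qt$ with the $\phi_c$-action is $\bigl(\Q[t^{\pm c}]\text{ with the usual action}\bigr)\otimes_{\Q[t^{\pm c}]}\Qt$, with $\pi_1(M_K)$ acting on the first factor only. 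Taking $H_1(M_K;-)$ and using that $\Qt$ is free, hence flat, over $\Q[t^{\pm c}]$, this gives $H_1(M_K;\Qt^{\phi_c})\cong\A(K)\otimes_{\Qs}\Qt$, where $\A(K)$ is viewed as a $\Qs$-module via the identification $s\leftrightarrow t$ and $\Qs\into\Qt$ is $s\mapsto t^c$. Comparing universal properties, the right-hand side is precisely $\A(K)\otimes_c\Qt=\A_c(K)$; in particular the canonical map $\A(K)\to\A_c(K)$ is $x\mapsto x\otimes1$ and carries multiplication by $t$ to multiplication by $t^c$, as claimed right after the definition. Since $\Bl_c$ is defined exactly as in Definition \ref{def: Bl} with every coefficient module twisted by $\phi_c$, it is the Blanchfield pairing of $H_1(M_K;\Qt^{\phi_c})$.

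Next I would argue that forming the Blanchfield pairing commutes with $-\otimes_{\Qs}\Qt$. Tensoring the defining square of Definition \ref{def: Bl} (written in the variable $s$) with $\Qt$ over $\Qs$: $M_K$ is compact, so its chains form a finite free complex and homology and cohomology base-change along the flat map $\Qs\into\Qt$; Poincar\'e duality, the Bockstein, and the Kronecker map are natural; and $(\Q(s)/\Qs)\otimes_{\Qs}\Qt\cong\Q(t)/\Qt$, since $\Q(s)\otimes_{\Qs}\Qt=\Q(t)$ (localizing $\Qt$ at the nonzero elements of $\Q[t^{\pm c}]$ inverts all of $\Qt\setminus\{0\}$, as $\Qt$ is integral over $\Q[t^{\pm c}]$) and $\Qt$ is flat over $\Qs$. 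Hence $\Bl_c$ is the extension of scalars of $\Bl$. It remains to spell this out. Since $\A_c(K)$ is generated over $\Qt$ by the elements $x\otimes1$ with $x\in\A(K)$, and $\Bl_c$ is $\Qt$-sesquilinear,
\begin{equation*}
  \Bl_c\bigl(x\otimes f(t),\,y\otimes g(t)\bigr)=f(t)\cdot\Bl_c(x\otimes1,\,y\otimes1)\cdot g(t^{-1}),
\end{equation*}
while naturality of the defining square identifies $\Bl_c(x\otimes1,y\otimes1)$ with the image of $\Bl(x,y)$ under the map $\Q(s)/\Qs\to\Q(t)/\Qt$ induced by $s\mapsto t^c$, that is, with $h(\Bl(x,y))$. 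That this induced map is well defined amounts to $\Q(t^c)\cap\Qt=\Q[t^{\pm c}]$ inside $\Q(t)$, which holds because any element of $\Q(t^c)$ lying in $\Qt$ is integral over the integrally closed ring $\Q[t^{\pm c}]$ and hence lies in it. Combining the two assertions gives the stated formula.

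The step I expect to require the most care is the flat-base-change bookkeeping just above: the Blanchfield pairing is assembled from the non-exact functor $\Hom_{\Qt}(-,\Q(t)/\Qt)$ and from the module $\Q(t)/\Qt$ itself, so one must verify that the entire commutative square of Definition \ref{def: Bl} --- not merely each of its vertices --- survives $-\otimes_{\Qs}\Qt$, and that the conjugation $t\mapsto t^{-1}$ on $\Qt$ is compatible with $s\mapsto s^{-1}$ on $\Qs$ under $s\mapsto t^c$, so that the outcome is genuinely sesquilinear. An alternative that bypasses this is to compute with a square presentation $\Qt^n\xrightarrow{\Lambda(s)}\Qt^n\to\A(K)\to0$, $\det\Lambda(s)\doteq\Delta_K(s)$: then $\Qt^n\xrightarrow{\Lambda(t^c)}\Qt^n\to\A_c(K)\to0$ presents $\A_c(K)$ (the presentation base-changes along $s\mapsto t^c$), the Blanchfield pairing of both modules is given by $(\xi,\eta)\mapsto\overline{\eta}^{\,T}\Lambda(\cdot)^{-1}\xi\bmod\Qt$, and substituting $s\mapsto t^c$ into the matrix turns the formula for $\Bl$ into the one for $\Bl_c$; since $h$ is exactly the effect of this substitution on $\Q(t)/\Qt$, the claimed identity follows after distributing the scalars $f(t)$ and $g(t^{-1})$. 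Either route proves the proposition.
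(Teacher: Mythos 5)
The paper does not give a proof of this proposition; it is stated with the citation \cite{Cha07}, so there is no in-paper argument to compare against. Your proof is correct and follows the natural route: realize $\A_c(K)$ as the base change of $\A(K)$ along $\Qs\into\Qt$, $s\mapsto t^c$ (the flatness of $\Qt$ over $\Q[t^{\pm c}]$, together with the identification $\Qt^{\phi_c}\cong \Q[t^{\pm c}]\otimes_{\Q[t^{\pm c}]}\Qt$, does give $H_1(M_K;\Qt^{\phi_c})\cong\A(K)\otimes_{\Qs}\Qt$), then observe that the defining square for the Blanchfield pairing survives this flat base change so that $\Bl_c(x\otimes1,y\otimes1)$ is the image of $\Bl(x,y)$ under $\Q(s)/\Qs\to\Q(t)/\Qt$, and finally distribute $f(t)$ and $g(t^{-1})$ by sesquilinearity. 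The compatibility of the involutions ($s\mapsto s^{-1}$ versus $t\mapsto t^{-1}$) and the computation $(\Q(s)/\Qs)\otimes_{\Qs}\Qt\cong\Q(t)/\Qt$ via integrality of $\Qt$ over $\Q[t^{\pm c}]$ are exactly the points that need checking, and you handle them correctly. The alternative via a square presentation matrix is also valid and is arguably the cleanest way to sidestep the base-change bookkeeping you flag; one small typo there: the presentation of $\A(K)$ should read $\Qs^n\xrightarrow{\Lambda(s)}\Qs^n\to\A(K)\to0$, not $\Qt^n$.
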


A previously used $\rho$-invariant rational obstruction \cite{Cha07, Kim23} is defined by introducing complexity on the $\rho$-invariant determined by $x\in P\subset \A_c(K)$ in \cite{COT03}. Our obstruction is slightly different than those. We introduce complexity on the first order $L^2$-signatures. However, by the solvability of the quotient group, below vanishing theorem directly comes from \cite[Theorem 4.2] {COT03}. Although these invariants would not be used explicitly by themselves in this paper, except for the case when $P=0$, we contain these for compatibility with above obstructions and better understanding of the usage of localization in later.

\begin{definition}
Let $P$ be an isotropic submodule of $\A_c(K)$ with respect to $\Bl_c$. \textit{A first order $L^2$-signature} $\rho^{(1)}_c(K, P)$ \textit{with complexity} $c$ is defined by $\rho(M_K,\phi)$ where $\phi$ is the quotient map by the kernel of:
\begin{equation*}
  \pi_1(M_K)^{(1)}\rightarrow \frac{\pi_1(M_K)^{(1)}}{\pi_1(M_K)^{(2)}}\rightarrow \A (K)\rightarrow\A_c(K)\rightarrow \A_c(K)/P.
\end{equation*}
\end{definition}

\begin{theorem}\label{CHL-style Q}
If $K$ is rationally slice in a rational homology ball $V^4$ with complexity $c$, then there exists a Lagrangian submodule $P$ of $\A_c (K)$ such that $\rho_c(K, P)=0.$
For a slice disk $\Delta$, above $P$ is given by
\begin{equation*}
  P = \ker (\A_c (K)\rightarrow H_1(V-\Delta;\Qt)).
\end{equation*}
\end{theorem}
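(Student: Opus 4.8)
The plan is to adapt the proof of the Cochran--Orr--Teichner vanishing theorem (Theorem~\ref{CHL-derivative}) to the rational-ball setting with complexity; the essentially new ingredient is a bookkeeping of the complexity-$c$ surgery curve through the relevant exact sequences.

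First I would set $W = V\setminus\nu(\Delta)$, the exterior of the slice disk, so that $\partial W = M_K$ (the normal framing of a $2$-disk is the $0$-framing). Applying Mayer--Vietoris to the decomposition $V = W\cup_{\Delta\times S^1}(\Delta\times D^2)$ and using $H_*(V;\Q)\cong H_*(\mathrm{pt};\Q)$, one gets $H_1(W;\Q)\cong\Q$ and $H_i(W;\Q)=0$ for $i\ge 2$, so $W$ is a rational homology circle; the same computation shows that a meridian $\mu$ of $K$ generates $H_1(W;\Q)$, and hence equals $c$ times a generator of $H_1(W;\Z)/\mathrm{tors}$, where $c$ is the complexity. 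Choosing the coefficient system $\pi_1(W)\to H_1(W;\Z)/\mathrm{tors}=\Z=\langle t\rangle$, its restriction to $\pi_1(M_K)$ sends $\mu$ to $t^{c}$, which is precisely the system defining $\A_c(K)$; it therefore induces $j_c\colon\A_c(K)\to H_1(W;\Qt)$, and I would \emph{define} $P:=\ker j_c$, exactly as in the statement.

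Second I would show that $P$ is Lagrangian for $\Bl_c$. Isotropy ($P\subseteq P^\perp$) is a naturality statement: $\Bl_c$ is the image, under Poincar\'e--Lefschetz duality and the inverse Bockstein, of the intersection data of $M_K$, so classes dying in $H_1(W;\Qt)$ pair trivially; to match conventions one invokes the description of $\Bl_c$ as the natural extension of $\Bl$ through $h\colon\Q(t)/\Qt\to\Q(t)/\Qt$ recorded before the statement. For the reverse inclusion $P^\perp\subseteq P$ one runs ``half lives, half dies'' with $\Qt$-coefficients: the long exact sequence of $(W,M_K)$, the dualities $H_k(W;\Qt)\cong H^{4-k}(W,M_K;\Qt)$, and the universal coefficient theorem over the PID $\Qt$ produce a Fox--Milnor-type factorization of the order $\Delta_K(t^{c})$ of $\A_c(K)$ into the orders of $P$ and of $\A_c(K)/P=\operatorname{im}j_c$; since $\Bl_c$ is nonsingular this is equivalent to $|P|=|P^\perp|$, hence to $P=P^\perp$. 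Compared with the classical ($V=B^4$) case the extra point is that one must keep track of the term $H_0(M_K;\Qt)\cong\Qt/(t^{c}-1)$, of $\Q$-dimension $c$ rather than $1$, contributed by the complexity-$c$ surgery curve when threading the exact sequences. I expect this order count --- pinning $P$ down as \emph{exactly} Lagrangian rather than merely isotropic --- to be the main obstacle; the rest is formal.

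Finally I would deduce $\rho_c(K,P)=0$. Since $P=\ker j_c$, the metabelian PTFA quotient $\Gamma$ of $\pi_1(M_K)$ used to define $\rho_c(K,P)$ maps injectively, through $\pi_1(M_K)\to\pi_1(W)$, into the corresponding metabelian quotient $\Lambda$ of $\pi_1(W)$: on first derived quotients the map is $\A_c(K)/P=\operatorname{im}j_c\hookrightarrow H_1(W;\Qt)$, and on abelianizations it is multiplication by $c$, which is injective. Lemma~\ref{rho-sg} then gives $\rho_c(K,P)=\rho(M_K,\phi)=\sigma^{(2)}(W,\psi)-\sigma(W)$, and both terms vanish: $\sigma(W)=0$ because $H_2(W;\Q)=0$, and $\sigma^{(2)}(W,\psi)=0$ by Lemma~\ref{sg-bound} once $H_2(W;\K)=0$ for $\K$ the fraction field of $\Q\Lambda$. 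This last vanishing --- equivalently, the whole assertion $\sigma^{(2)}(W,\psi)-\sigma(W)=0$ --- is the conclusion of \cite[Theorem 4.2]{COT03}, whose proof only uses that $W$ is a rational homology circle with $\partial W=M_K$, that $\psi$ factors through a solvable (here PTFA) group, and that the induced metabelian system on $M_K$ kills exactly the self-annihilating submodule $P$, all of which we have arranged. This completes the proof, the Lagrangian count of the second step being the crux.
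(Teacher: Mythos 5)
Your proposal takes essentially the same route as the paper: set $W=V-\nu\Delta$, define $P=\ker\bigl(j_c:\A_c(K)\to H_1(W;\Qt)\bigr)$, check via Lemma~\ref{inj-cond} that the induced metabelian coefficient system extends injectively over $\pi_1(W)$, and conclude from \cite[Theorem 4.2]{COT03} that the signature defect of $W$ vanishes. The only discrepancy is the Lagrangian step, where the paper simply cites \cite[Theorem 5.13]{Cha07} after noting (\cite[Remark 5.5 (6)]{CHL09}) that the slice-disk exterior is a rational $n$-solution for every $n$, whereas you sketch the half-lives-half-dies argument directly — a legitimate alternative that reproduces how Cha's cited result is actually established, and you correctly flag the complexity-$c$ order count as the point requiring care.
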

\begin{proof}
  Note that the complement $W = V - \nu\Sigma$ is a rational $n$-solution for any positive (half) integer $n$ of the zero surgery $M_K$ (For example, see \cite[Remark 5.5 (6)]{CHL09}). Then $P$ is Lagrangian with respect to $\Bl_c$ \cite[Theorem 5.13]{Cha07}.
 Consider the induced map $j_c:\A_c(K)\rightarrow \A(W)$. Let $P = \ker{j_c}$. Then the following diagram commutes:
\begin{center}
\begin{tikzcd}
  &\frac{\pi_1(M)^{(1)}}{\pi_1(M)^{(2)}} \ar{dd}{j_*} \ar{rr} \ar{dr} & & \A_c(K)/P \ar[hook]{dd}{j_c} \\
  & &\frac{\pi_1(M)^{(1)}}{\ker\phi}\ar[dashed]{dl} \ar[hook]{ur} &\\
&\frac{\pi_1(W)^{(1)}}{\pi_1(W)^{(2)}} \ar[hook]{rr} & &\A(W)
\end{tikzcd}
\end{center}
The dashed arrow is naturally induced since any $\alpha\in\ker\phi$ is sent to $0\in\A(W)$ so that it goes to $\pi_1(W)^{(2)}$. It follows from injectivity of the dashed map that $\phi$ extends to $\pi_1(W)\rightarrow \frac{\pi_1(W)}{\pi_1(W)^{(2)}}$. This extension argument is proved in Lemma \ref{inj-cond} in detail. Now by \cite[Theorem 4.2]{COT03}, $\rho_c(K, P)$ vanishes.
\end{proof}

\begin{remark}
  Readers may compare Theorem \ref{CHL-style Q} with Theorem \ref{CHL-derivative} and Theorem \ref{CHL-Q}. Theorem \ref{CHL-style Q} is a rational version of Theorem \ref{CHL-derivative}, and Theorem \ref{CHL-Q} also obstructs rational sliceness.
\end{remark}

If the set of all Lagrangian submodules can be controlled as complexity $c$ varies, it might be possible to obstruct with $\rho_c$ for only finitely many $c$. This phenomenon is based on the subgroup property and is remarked in \cite[Section 5.2]{Cha07} for the originally defined von Neumann $\rho$-invariants determined by $x\in\A_c(K)$.

\section{Localized $\rho$-invariant with complexity}\label{s-3}
To control the set of all Lagrangian submodules of $\A_c$, we introduce localization on the rational Alexander module $\A_c$.  Then we define a $\rho$-invariant $\rho^{(1)}_{c, p}$ and prove basic properties. The proof for additivity is rather long, so a reader may skip it. Throughout this section, $K$ denotes a knot in $S^3$.

\begin{definition}
  Let $R_p$ be the $\Qt$-module defined in Section \ref{ss-2.3}. \textit{The localized Alexander module with complexity} is
\begin{equation*}
  \A_{c, p}(K) = \A_c(K)\otimes R_p,
\end{equation*}
where the $\Qt$-tensor product is the usual multiplication.
\end{definition}
\noindent As in Section \ref{ss-2.3}, here we similarly expect that some isotropic submodule of $\A_c(K)$ would be killed since the torsion submodules of order relatively prime with $p$ are annihilated in $\A_{c, p}(K)$.
\begin{definition}
Let $\pi_1(M_K)^{(2)}_{c,p}$ be the kernel of the following map:
\begin{equation*}
  \pi_1(M_K)^{(1)}\rightarrow \frac{\pi_1(M_K)^{(1)}}{\pi_1(M_K)^{(2)}} \hookrightarrow \A(K)\hookrightarrow \A_c(K)\rightarrow \A_{c, p}(K).
\end{equation*}
\textit{A localized $\rho$-invariant of $K$ at $p$ with complexity $c$} is defined as:
\begin{equation*}
  \rho^{(1)}_{c, p}(K) = \rho\left(M_K, \phi:\pi_1(M_K)\rightarrow \frac{\pi_1(M_K)}{\pi_1(M_K)^{(2)}_{c, p}}\right).
\end{equation*}
\end{definition}
\begin{remark}
Since $R_p$ is a free $\Qt$-module, it is flat so that $\otimes_{\Qt}R_p$ is exact.
\end{remark}

\begin{proposition}\label{localization property}
Let $p\in \Qt$, $c$ be a positive integer, and $\Delta_K$ be the Alexander polynomial of $K$.
\begin{itemize}
\item [(a)] If $\Delta_K(t^c)$ and $p(t)$ are coprime, then $\rho^{(1)}_{c,p}(K)=\rho^{(0)}(K)$.
\item [(b)] If $\Delta_K(t^c) = p(t)$, then $\rho^{(1)}_{c,p}(K)=\rho^{(1)}(K)$.
\end{itemize}
\end{proposition}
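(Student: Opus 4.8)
The plan is to analyze the two extreme cases of the localization parameter $p$ relative to the complexity-scaled Alexander polynomial $\Delta_K(t^c)$, showing that in each case the quotient group $\pi_1(M_K)/\pi_1(M_K)^{(2)}_{c,p}$ collapses to a familiar PTFA group for which the $\rho$-invariant has already been named. Recall that the order of $\A_c(K)$ is $\Delta_K(t^c)$, so the behavior of $\A_{c,p}(K) = \A_c(K)\otimes R_p$ is governed by how $p$ interacts with $\Delta_K(t^c)$.

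For part (a), suppose $\Delta_K(t^c)$ and $p(t)$ are coprime. Since $\A_c(K)$ is a torsion $\Qt$-module annihilated by $\Delta_K(t^c)$, and every element of $\Delta_K(t^c)\cdot\Qt$ becomes a unit in $R_p$ by coprimality, tensoring with $R_p$ kills everything: $\A_{c,p}(K)=0$. Hence the composite $\pi_1(M_K)^{(1)}\to\A(K)\to\A_c(K)\to\A_{c,p}(K)$ is the zero map, so $\pi_1(M_K)^{(2)}_{c,p}=\pi_1(M_K)^{(1)}$, and therefore
\begin{equation*}
  \frac{\pi_1(M_K)}{\pi_1(M_K)^{(2)}_{c,p}} \cong \frac{\pi_1(M_K)}{\pi_1(M_K)^{(1)}} \cong \Z = \Gamma_0,
\end{equation*}
with $\phi$ being precisely the abelianization $\phi_0$ sending a meridian to $1$. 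By definition $\rho^{(1)}_{c,p}(K)=\rho(M_K,\phi_0)=\rho^{(0)}(K)$.

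For part (b), suppose $\Delta_K(t^c)=p(t)$. The key observation is that the map $\A_c(K)\to\A_{c,p}(K)=\A_c(K)\otimes R_p$ is \emph{injective}: since $R_p$ is a flat (indeed free) $\Qt$-module (see the Remark above), the kernel of $\A_c(K)\to\A_c(K)\otimes R_p$ is the $R_p$-torsion, which consists of elements killed by some $g\in\Qt$ coprime to $p=\Delta_K(t^c)$; but $\A_c(K)$ is annihilated by $\Delta_K(t^c)$, and an element of a module over $\Qt$ (a PID localization) annihilated by both $g$ and $\Delta_K(t^c)$ with $(g,\Delta_K(t^c))=1$ must be zero. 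Consequently $\pi_1(M_K)^{(2)}_{c,p}$ equals the kernel of $\pi_1(M_K)^{(1)}\to\A_c(K)$, which by the discussion preceding Theorem \ref{CHL-style Q} (the case $P=0$) is exactly $\pi_1(M_K)^{(2)}$. Thus
\begin{equation*}
  \frac{\pi_1(M_K)}{\pi_1(M_K)^{(2)}_{c,p}} \cong \frac{\pi_1(M_K)}{\pi_1(M_K)^{(2)}} = \Gamma_1,
\end{equation*}
and $\phi$ is the quotient map $\phi_1$ defining $\rho^{(1)}(K)$. Here one should be slightly careful: the map $\A(K)\to\A_c(K)$ (sending $t\mapsto t^c$) need not itself be injective, but the relevant composite $\pi_1(M_K)^{(1)}/\pi_1(M_K)^{(2)}\hookrightarrow\A(K)\to\A_c(K)$ has the same kernel as the map into $\A(K)$ precisely when we track it correctly against the coefficient system $\phi_0$ followed by $t\mapsto t^c$; the point is that $\pi_1(M_K)^{(2)}$ is by definition the kernel of $\pi_1(M_K)^{(1)}\to H_1(M_K;\Zt)\hookrightarrow\A(K)$, and post-composing with an injection (as just established for $\A_c(K)\hookrightarrow\A_{c,p}(K)$) does not change the kernel. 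Hence $\rho^{(1)}_{c,p}(K)=\rho(M_K,\phi_1)=\rho^{(1)}(K)$.

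The main obstacle is the injectivity claim in part (b) and the bookkeeping of which kernel is which: one must distinguish the non-injective localization map $\A(K)\to\A_c(K)$ from the injective localization map $\A_c(K)\to\A_{c,p}(K)$, and verify that when $p=\Delta_K(t^c)$ the kernel of the full composite $\pi_1(M_K)^{(1)}\to\A_{c,p}(K)$ coincides with $\pi_1(M_K)^{(2)}$ as used in the definition of $\rho^{(1)}(K)$ via the exact sequence $0\to\A^\Z\to\Gamma_1\to\Z\to 0$. Everything else is a direct unwinding of the definitions together with the flatness Remark and the subgroup property.
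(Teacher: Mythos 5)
Your proof takes essentially the same route as the paper: in (a) you observe that $\Delta_K(t^c)$ annihilates $\A_c(K)$ and becomes a unit in $R_p$, forcing $\A_{c,p}(K)=0$ and $\pi_1(M_K)^{(2)}_{c,p}=\pi_1(M_K)^{(1)}$; in (b) you show that $\A_c(K)\to\A_{c,p}(K)$ is injective because any element in the kernel is killed by some $h$ coprime to $p=\Delta_K(t^c)$, and since $\A_c(K)$ is $\Delta_K(t^c)$-torsion such an element must vanish. That is exactly the paper's argument.

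Two small imprecisions are worth flagging. First, in (b) your appeal to flatness of $R_p$ is not doing any work: flatness of $R_p$ does not by itself imply that $M\to M\otimes R_p$ is injective (consider $\Z/2\to\Z/2\otimes_\Z\Q=0$). The correct mechanism is the one you give next — the kernel of localization is the set of elements annihilated by something in the multiplicative system, and this torsion argument stands on its own without the flatness remark. Second, you raise a worry about whether $\A(K)\to\A_c(K)$ (the map $t\mapsto t^c$) is injective and then do not cleanly resolve it; the sentence about ``tracking the coefficient system correctly'' does not constitute an argument. In fact this map is injective — the paper builds this into the definition of $\pi_1(M_K)^{(2)}_{c,p}$ by writing a hooked arrow $\A(K)\hookrightarrow\A_c(K)$ — and the reason is elementary: on each cyclic summand $\Qt/(q(t))\to\Qt/(q(t^c))$, if $g(t^c)=q(t^c)h(t)$ for $h\in\Qt$, then $h=g(t^c)/q(t^c)$ is a Laurent polynomial depending only on $t^c$, hence $h(t)=h_0(t^c)$, so $g\in(q(t))$. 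Your proof would be complete if you simply stated and used this injectivity rather than flagging it as a possible obstacle; both your argument and the paper's rely on it.
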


\begin{proof}\hfill
\begin{itemize}
\item[(a)] Let $f(t)$ be an element of $\A_{c, p}(K)$. Since $\A_c(K)$ is annihilated by $\Delta_K(t^c)$ and $\Delta_K(t^c)$ is invertible in $\A_{c, p}(K)$, 
  \begin{equation*}
    f(t)=f(t)\otimes 1=f(t)\Delta_K(t^c)\otimes\frac{1}{\Delta_K(t^c)}=0.
  \end{equation*}
Thus, $\A_{c, p}(K)=0$, which implies that the kernel $\pi_1(M_K)^{(2)}_{c, p}$ is $\pi_1(M_K)^{(1)}$. Therefore,
\begin{equation*}
  \rho_{c,p}^{(1)}(K)=\rho\left(M_K, \phi_0:\pi_1(M_K)\rightarrow \frac{\pi_1(M_K)}{\pi_1(M_K)^{(1)}}\right)=\rho^{(0)}(K).
\end{equation*}
\item[(b)] Let $g(t)$ be an element of the kernel of $\A_c(K)\rightarrow \A_{c, p}(K)$. Then there exists $h(t)\in \Qt$ such that
  \begin{equation*}
    g(t)\otimes 1 =g(t)h(t)\otimes \frac{1}{h(t)} = 0 \otimes \frac{1}{h(t)} = 0,
  \end{equation*}
where $(h(t), \Delta_K(t^c))=1$. Since $g(t)h(t)=0$ in $\A_c(K)$, $g(t)$ has a factor $\Delta_K(t^c)$. Thus, $g(t)=0$ in $\A_c(K)$, which implies that $\A_{c}(K)$ injects to $\A_{c, p}(K)$. Then $\pi_1(M_K)^{(2)}_{c, p}$ is the same as the kernel $\pi_1(M_K)^{(2)}$ of the quotient $\pi_1(M_K)^{(1)}\rightarrow \frac{\pi_1(M_K)^{(1)}}{\pi_1(M_K)^{(2)}}$. Thus,
\begin{equation*}
  \rho_{c,p}^{(1)}(K)=\rho(M_K, \phi_1:\pi_1(M_K)\rightarrow \frac{\pi_1(M_K)}{\pi_1(M_K)^{(2)}})=\rho^{(1)}(K).
\end{equation*}
\end{itemize}
\end{proof}

\noindent Proposition \ref{localization property} is where the localization plays a role. Each condition on $p$ with respect to the Alexander polynomial in (a) and (b) makes $\phi$ be independent of the complexity, so we can obtain a fixed value as $c$ varies.

We prove several propositions which will be used later. Instead of the $0$-surgery $M_K$, we consider a boundary component $M^3$ of $W^4$ with complexity $c$, and use notations such as $\A(M)$, $\A_c(M)$, and $\A_{c, p}(M)$ to denote various versions of Alexander module of $M$.

\begin{proposition}\label{induced}
Suppose that $M$ is a boundary component of $W$ with complexity $c$. Then the inclusion induces a well-defined homomorphism
\begin{equation*}
  i_c: \frac{\pi_1(M)}{\pi_1(M)^{(2)}_{c, p}}\rightarrow \frac{\pi_1(W)}{\pi_1(W)^{(2)}_p}.
\end{equation*}
\end{proposition}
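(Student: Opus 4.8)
The plan is to show that the homomorphism $\iota_*\colon\pi_1(M)\to\pi_1(W)$ induced by the inclusion $\iota\colon M\hookrightarrow W$ carries $\pi_1(M)^{(2)}_{c,p}$ into $\pi_1(W)^{(2)}_{p}$, and then to observe that both of these subgroups are normal, so that $\iota_*$ descends to the claimed map $i_c$ on the quotients. Normality is immediate from the definitions: $\pi_1(M)^{(2)}_{c,p}$ lies inside the characteristic subgroup $\pi_1(M)^{(1)}$, and modulo $\pi_1(M)^{(2)}$ it is the kernel of a $\Zt$-module homomorphism out of $H_1(M;\Zt)=\pi_1(M)^{(1)}/\pi_1(M)^{(2)}$; hence it is invariant under the conjugation action of $\pi_1(M)$, which factors through $\pi_1(M)/\pi_1(M)^{(1)}\cong\Z$ and is precisely the $t$-action. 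The same reasoning applies to $\pi_1(W)^{(2)}_{p}$.

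For the containment I would first invoke functoriality of the derived series, so that $\iota_*$ restricts to maps $\pi_1(M)^{(i)}\to\pi_1(W)^{(i)}$ for $i=1,2$ and hence induces $\bar\iota\colon H_1(M;\Zt)=\pi_1(M)^{(1)}/\pi_1(M)^{(2)}\to\pi_1(W)^{(1)}/\pi_1(W)^{(2)}=H_1(W;\Zt)$, fitting into the evident commuting square with $\iota_*|_{\pi_1(M)^{(1)}}$. The essential input is the complexity-$c$ hypothesis: writing $\epsilon_M\colon\pi_1(M)\to\Z$ and $\epsilon_W\colon\pi_1(W)\to\Z$ for the abelianizations onto the free parts of $H_1(M;\Z)$ and $H_1(W;\Z)$, it says exactly that $\epsilon_W\circ\iota_*=c\cdot\epsilon_M$, equivalently that the coefficient system $\epsilon_W$ defining $\A(W)$ restricts along $\iota_*$ to the composite $\pi_1(M)\xrightarrow{\epsilon_M}\Z\xrightarrow{t\mapsto t^{c}}\Z$ used to define $\A_c(M)$ in Section~\ref{ss-2.4}. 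This is precisely the situation in which the inclusion induces the natural $\Qt$-module homomorphism $j_c\colon\A_c(M)\to\A(W)$ of Section~\ref{ss-2.4}, and, by the construction of $j_c$, the composite $H_1(M;\Zt)\hookrightarrow\A(M)\to\A_c(M)\xrightarrow{j_c}\A(W)$ coincides with $H_1(M;\Zt)\xrightarrow{\bar\iota}H_1(W;\Zt)\hookrightarrow\A(W)$.

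Next I would apply the functor $-\otimes_{\Qt}R_p$ (exact, since $R_p$ is flat over $\Qt$) to $j_c$ to get $g\colon\A_{c,p}(M)\to\A_{p}(W)$, and consider the two composites
\begin{gather*}
\alpha_M\colon\pi_1(M)^{(1)}\twoheadrightarrow H_1(M;\Zt)\hookrightarrow\A(M)\to\A_c(M)\to\A_{c,p}(M),\\
\alpha_W\colon\pi_1(W)^{(1)}\twoheadrightarrow H_1(W;\Zt)\to\A(W)\to\A_{p}(W),
\end{gather*}
whose kernels are, by definition, $\pi_1(M)^{(2)}_{c,p}$ and $\pi_1(W)^{(2)}_{p}$. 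Combining the commuting square from the previous paragraph with the identity relating $j_c$ and $\bar\iota$ yields $g\circ\alpha_M=\alpha_W\circ\bigl(\iota_*|_{\pi_1(M)^{(1)}}\bigr)$, whence $\iota_*\bigl(\ker\alpha_M\bigr)\subseteq\ker\alpha_W$, i.e.\ $\iota_*\bigl(\pi_1(M)^{(2)}_{c,p}\bigr)\subseteq\pi_1(W)^{(2)}_{p}$. The proposition follows.

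The one step I expect to require care — and which I view as the heart of the argument — is the identification in the second paragraph: that ``complexity $c$'' is exactly the hypothesis under which the restriction of the coefficient system of $W$ agrees with the $t\mapsto t^{c}$ system on $M$, so that $j_c$ is available and the relevant square commutes. Once that is in place, what remains is a diagram chase built from functoriality of the derived series and exactness of localization at $p$.
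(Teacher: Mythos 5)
Your proposal is correct and follows essentially the same route as the paper: construct $j_c\otimes\operatorname{id}\colon\A_{c,p}(M)\to\A_p(W)$, chase the commuting square to see that $\pi_1(M)^{(2)}_{c,p}$ maps into $\pi_1(W)^{(2)}_p$, and descend to the quotients. The paper's proof is terser — it does not spell out normality, functoriality of the derived series, or the coefficient-system compatibility that you isolate in your second paragraph, treating those as implicit from Section~\ref{ss-2.4} — but the underlying argument is the same.
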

\begin{proof}
  As mentioned in Section \ref{ss-2.4}, there is a well-defined $\Qt$-homomorphism $j_c:\A_c(M)\rightarrow \A(W)$, and this induces $j_c\otimes id:\A_{c,p}(M)\rightarrow \A_p(W)$. Then the diagram below commutes.
\begin{center}
\begin{tikzcd}
&\frac{\pi_1(M)^{(1)}}{\pi_1(M)^{(2)}}\ar{d} \ar[hook]{r} &\A(M) \ar[hook]{r} &\A_c(M)\ar{d}{j_c} \ar{r} & \A_{c, p}(M) \ar{d}{j_c\otimes id} \\
&\frac{\pi_1(W)^{(1)}}{\pi_1(W)^{(2)}} \ar[hook]{rr} &&\A(W) \ar{r} &\A_p(W)
\end{tikzcd}
\end{center}
Since any $x\in \pi_1(M)^{(2)}_{c,p}$ is $0$ in $\A_{c, p}(M)$, it goes to $0$ in $\A_p(W)$ via $j_c\otimes id$. Thus, the image of $x$ under $\pi_1(M)\rightarrow \pi_1(W)$ is contained in $\pi_1(W)_p^{(2)}$. 
\end{proof}
For the value $\rho(M_K,\phi)$ to be obtained by the signature defect of $W$ as Lemma \ref{rho-sg}, it is necessary for the above induced map to be injective. We provide a sufficient condition for such a map to be injective.

\begin{lemma}\label{inj-cond} For a boundary component $M$ of a 4-manifold $W$ with complexity $c$, if the inclusion induced map $j_c\otimes id: \A_{c, p}(M)\rightarrow \A_p(W)$ is injective, then so is $i_c$:
  \begin{equation*}
    \frac{\pi_1(M)}{\pi_1(M)^{(2)}_{c, p}}\xhookrightarrow{i_c} \frac{\pi_1(W)}{\pi_1(W)^{(2)}_p}.
  \end{equation*}
\end{lemma}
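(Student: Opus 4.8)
The plan is to show that the kernel of $i_c$ is trivial by a standard commutator/derived-series argument, exactly paralleling the extension argument sketched in the proof of Theorem~\ref{CHL-style Q}. First I would unwind the definitions: an element of $\frac{\pi_1(M)}{\pi_1(M)^{(2)}_{c,p}}$ that dies in $\frac{\pi_1(W)}{\pi_1(W)^{(2)}_p}$ maps, in particular, to the trivial element of $\frac{\pi_1(W)}{\pi_1(W)^{(1)}}\cong\Z$, so it must already lie in the image of $\frac{\pi_1(M)^{(1)}}{\pi_1(M)^{(2)}_{c,p}}$; here I use that the complexity-$c$ inclusion induces multiplication by $c$ on $H_1$, which is injective on the free part, together with the commuting square relating the abelianizations of $\pi_1(M)$ and $\pi_1(W)$. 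Thus it suffices to prove that the restricted map
\begin{equation*}
\frac{\pi_1(M)^{(1)}}{\pi_1(M)^{(2)}_{c,p}}\longrightarrow \frac{\pi_1(W)^{(1)}}{\pi_1(W)^{(2)}_p}
\end{equation*}
is injective. But by construction $\frac{\pi_1(M)^{(1)}}{\pi_1(M)^{(2)}_{c,p}}$ is (isomorphic to) the image of $\frac{\pi_1(M)^{(1)}}{\pi_1(M)^{(2)}}$ inside $\A_{c,p}(M)$, and likewise on the $W$ side, so this reduces precisely to the injectivity of $j_c\otimes\mathrm{id}\colon\A_{c,p}(M)\to\A_p(W)$, which is the hypothesis.

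More carefully, the key step is the following diagram chase. Using the commuting square from Proposition~\ref{induced},
\begin{center}
\begin{tikzcd}
\frac{\pi_1(M)^{(1)}}{\pi_1(M)^{(2)}} \ar{r}\ar{d} & \A_{c,p}(M) \ar[hook]{d}{j_c\otimes id}\\
\frac{\pi_1(W)^{(1)}}{\pi_1(W)^{(2)}} \ar{r} & \A_p(W)
\end{tikzcd}
\end{center}
I would argue that if $\alpha\in\pi_1(M)^{(1)}$ maps into $\pi_1(W)^{(2)}_p$, then its image in $\A_p(W)$ is $0$; chasing the square backwards (injectivity of $j_c\otimes\mathrm{id}$) forces the image of $\alpha$ in $\A_{c,p}(M)$ to be $0$, i.e.\ $\alpha\in\pi_1(M)^{(2)}_{c,p}$. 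This is exactly what it means for $i_c$ restricted to $\frac{\pi_1(M)^{(1)}}{\pi_1(M)^{(2)}_{c,p}}$ to be injective. Combining with the first paragraph's reduction (the snake between the abelianization sequences for $M$ and $W$), injectivity of $i_c$ follows.

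The main subtlety — and the step I would be most careful about — is the reduction in the first paragraph: I must make sure that an element killed by $i_c$ really lies in the $\pi_1^{(1)}$-part before invoking the hypothesis. This uses that $H_1(M;\Q)\cong H_1(W;\Q)\cong\Q$ and that the inclusion is multiplication by the nonzero integer $c$ on free parts, so an element of $\pi_1(M)/\pi_1(M)^{(2)}_{c,p}$ whose image in $\pi_1(W)/\pi_1(W)^{(2)}_p$ is trivial has trivial image in $\pi_1(W)/\pi_1(W)^{(1)}$, hence (by the injectivity just noted, after clearing the factor $c$, which only matters modulo torsion but $\Z$ is torsion-free) trivial image in $\pi_1(M)/\pi_1(M)^{(1)}$ as well. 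One should also note that $\pi_1(M)^{(2)}_{c,p}\supseteq\pi_1(M)^{(2)}$, so the quotient $\frac{\pi_1(M)^{(1)}}{\pi_1(M)^{(2)}_{c,p}}$ makes sense and is a quotient of the Alexander module $\frac{\pi_1(M)^{(1)}}{\pi_1(M)^{(2)}}$, which is what lets the diagram above be drawn. Everything else is formal, and the argument mirrors the one already given for Theorem~\ref{CHL-style Q}, so I expect no genuine obstacle beyond bookkeeping.
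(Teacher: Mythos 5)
Your argument is correct and is essentially the paper's proof: both reduce injectivity of $i_c$ to the short-five-lemma diagram chase on the two short exact sequences $1\to\pi_1^{(1)}/\pi_1^{(2)}_{\ast}\to\pi_1/\pi_1^{(2)}_{\ast}\to\pi_1/\pi_1^{(1)}\to 1$ for $M$ and $W$, using that the right-hand vertical map is injective (multiplication by $c$ on $H_1(M;\Z)\cong\Z$) and that the left-hand vertical map is injective because it factors through the hypothesized injection $\A_{c,p}(M)\hookrightarrow\A_p(W)$. Your extra commentary on the need for $\pi_1(M)^{(2)}_{c,p}\supseteq\pi_1(M)^{(2)}$ and on torsion-freeness of $\Z$ is just making explicit the same bookkeeping the paper leaves implicit.
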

\begin{proof} Consider the following commutative diagram with the exact rows and all vertical maps induced by inclusions.
\begin{center}
\begin{tikzcd}
0 \ar{r} &\frac{\pi_1(M)^{(1)}}{\pi_1(M)^{(2)}_{c, p}}\ar{d}{j_*} \ar{r} &\frac{\pi_1(M)}{\pi_1(M)^{(2)}_{c, p}}\ar{d}{i_c} \ar{r} &\frac{\pi_1(M)}{\pi_1(M)^{(1)}}\ar{d}{k_*} \ar{r} &0\\
0 \ar{r} &\frac{\pi_1(W)^{(1)}}{\pi_1(W)^{(2)}_p}\ar{r} &\frac{\pi_1(W)}{\pi_1(W)^{(2)}_p}\ar{r} &\frac{\pi_1(W)}{\pi_1(W)^{(1)}}\ar{r} &0
\end{tikzcd}
\end{center}
Note that $k_*$ is the map between the first homology with coefficient $\Z$ so that it is injective. Now observe that $j_*$ is injective because the below diagram commutes.
\begin{center}
\begin{tikzcd}
\frac{\pi_1(M)^{(1)}}{\pi_1(M)^{(2)}_{c, p}} \ar{d}{j_*} \ar[hook]{r} & \A_{c, p}(M) \ar[hook]{d}{j_c\otimes id} \\
\frac{\pi_1(W)^{(1)}}{\pi_1(W)^{(2)}_p} \ar[hook]{r} & \A_p(W)
\end{tikzcd}
\end{center}
Since $j_*$ and $k_*$ are injective, so is $i_c$.
\end{proof}

To obstruct a knot from being of finite order, one particularly important aspect of localization is that this value is additive under connected sum. It is not expected in Theorem \ref{CHL-derivative} and Theorem \ref{CHL-style Q}.

\begin{proposition}\label{additivity} $\rho^{(1)}_{c, p}(J \# K) = \rho^{(1)}_{c, p}(J) + \rho^{(1)}_{c, p}(K)$.
\end{proposition}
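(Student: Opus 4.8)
The plan is to prove additivity by the standard cobordism argument for $\rho$-invariants: build a $4$-manifold $W$ whose boundary is $M_{J\#K}\sqcup (-M_J)\sqcup(-M_K)$, control its homology with the relevant twisted coefficients, and then apply Lemma \ref{rho-sg}. First I would recall the well-known cobordism $C$ with $\partial C = M_{J\#K}\sqcup(-M_J)\sqcup(-M_K)$ obtained from the standard connected-sum cobordism (attach a $1$-handle to $M_J\times I\sqcup M_K\times I$ along neighborhoods of the meridians, then a $2$-handle identifying the two meridians); this $C$ satisfies $H_1(C;\Z)\cong\Z$ generated by the common meridian class, $H_2(C;\Z)\cong\Z$, and the three boundary inclusions each induce isomorphisms on $H_1(-;\Q)$, all with complexity $1$ (hence complexity $c$ after composing with $t\mapsto t^c$). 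So $C$ has complexity $c$ as a cobordism, and via $\phi_0:\pi_1\to\Z\xrightarrow{t\mapsto t^c}\Z$ and its derived quotient $\pi_1(C)\to\pi_1(C)/\pi_1(C)^{(2)}_p$ we get a representation $\psi$ restricting to $\phi$ on each boundary piece.

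Next I would verify the injectivity hypothesis of Lemma \ref{inj-cond} for each of the three boundary components of $C$. By that lemma it suffices to check that $j_c\otimes\mathrm{id}:\A_{c,p}(M)\to\A_p(C)$ is injective for $M\in\{M_{J\#K},M_J,M_K\}$. Here one uses the classical computation $\A(M_{J\#K})\cong\A(J)\oplus\A(K)$ together with a Mayer--Vietoris argument for $C$ to identify $\A(C)=H_1(C;\Qt)$, from which one sees that the three maps $\A_c(M)\to\A(C)$ fit together so that $\A_c(M_{J\#K})\to\A(C)$ is injective and $\A_c(M_J)\oplus\A_c(M_K)\to\A(C)$ is an isomorphism (after inverting nothing — just rationally). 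Tensoring with the flat $\Qt$-module $R_p$ preserves these injectivities, so the hypothesis of Lemma \ref{inj-cond} holds for all three boundary components, and each $i_c$ is injective. This is the step I expect to be the main obstacle: one must pin down $H_1(C;\Qt)$ and the behaviour of the coefficient system precisely enough — in particular that no homology is created in $C$ beyond what the boundary pieces contribute — and getting the Mayer--Vietoris bookkeeping right with the complexity-$c$ twist is the delicate point.

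Then I would bound $H_2$ of the closed-up object. By Lemma \ref{rho-sg} applied to $C$ with the injective $i_c$'s,
\begin{equation*}
  \rho^{(1)}_{c,p}(J\#K) - \rho^{(1)}_{c,p}(J) - \rho^{(1)}_{c,p}(K) = \sigma^{(2)}(C,\psi) - \sigma(C).
\end{equation*}
It therefore remains to show the right-hand side vanishes. The ordinary signature $\sigma(C)=0$ because $C$ is built from products and handles of index $1$ and $2$ giving trivial intersection form (equivalently $H_2(C;\Q)\cong\Q$ is represented by a class of square zero, or one computes directly from the handle decomposition). For the $L^2$-signature I would invoke Lemma \ref{sg-bound}: letting $\Gamma = \pi_1(C)/\pi_1(C)^{(2)}_p$, which is PTFA, it suffices to check $H_2(C;\K)=0$ where $\K$ is the fraction field of $\Q\Gamma$; this follows from the handle structure of $C$ (one $1$-handle and one $2$-handle attached to $M_J\times I\sqcup M_K\times I$) together with the fact that $\Q\K$-homology of the products $M_J\times I$, $M_K\times I$ in the relevant degrees is controlled by the Alexander modules, so the algebraic mapping-cone computation forces $H_2$ to vanish. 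Combining, the defect is zero and additivity follows.

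Finally I would remark that the argument also shows $\rho^{(1)}_{c,p}$ changes sign under mirror-reverse (since reversing orientation of $M_K$ negates $\rho$ by Proposition \ref{orientation-reversing}), so $\rho^{(1)}_{c,p}$ descends to a homomorphism on the subgroup of $\C$ generated by knots for which the relevant injectivity holds — this is exactly what is needed downstream for Theorem \ref{main B}. A cleaner packaging, which I would use if the Mayer--Vietoris computation becomes unwieldy, is to instead take $W$ to be the connected-sum cobordism with the $-M_{J\#K}$ piece capped by nothing and argue via a single $4$-manifold with $H_2(W;\K)=0$; but the three-boundary-component version above is the most transparent route.
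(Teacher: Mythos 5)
The gap is at the very first step: the standard connected-sum cobordism $C$ you describe has all three boundary components $M_J$, $M_K$, $M_{J\#K}$ sitting at complexity $1$, not complexity $c$, because the meridians of all three pieces are identified with the single generator of $H_1(C;\Z)\cong\Z$. Complexity is a topological property of the inclusion $H_1(M;\Z)\to H_1(W;\Z)/\mathrm{torsion}$, and ``composing with $t\mapsto t^c$'' does not change it; it only changes the coefficient system. Consequently there is no inclusion-induced $\Qt$-linear map $j_c:\A_c(M)\to\A(C)$ when $C$ has complexity $1$, so the composite $\A_{c,p}(M)\to\A_p(C)$ that you want to feed into Lemma \ref{inj-cond} simply does not exist; the map induced by inclusion is $\A_p(M)\to\A_p(C)$, which controls $\rho^{(1)}_{1,p}=\rho^{(1)}_p$, not $\rho^{(1)}_{c,p}$. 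If one commits fully to the twisted coefficient system $\pi_1(C)\to\Z\xrightarrow{t\mapsto t^c}\Z$, the natural target becomes $\bigl(\A(C)\otimes_c\Qt\bigr)\otimes R_p$ rather than $\A_p(C)$, and the quotient of $\pi_1(C)$ one must use is a twisted analogue of $\pi_1(C)/\pi_1(C)^{(2)}_p$; Lemma \ref{inj-cond} as stated does not cover this situation, so your proposal tacitly assumes a variant of the lemma that would itself need proof. Your own remark flags the Mayer--Vietoris bookkeeping ``with the complexity-$c$ twist'' as the delicate point, and this is exactly where the argument breaks.

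The paper avoids the issue by manufacturing a cobordism in which the boundaries genuinely have complexity $c$. It first builds a rational homology cobordism $W_1$ from $M_J$ to $M_{J_{c,1}}$ using the $(c,1)$-cable pattern (following \cite{CFHH13}); in $W_1$ one has $[\mu_J]=c[\mu_{J_{c,1}}]$, so $M_J$ has complexity $c$. It then splices in $M_K\times I$ to obtain a cobordism $W_2$ with top boundary $M_{(J\#K)_{c,1}}$, and finally glues on an upside-down copy $W_3$ returning to $M_{J\#K}$. In the resulting $W$ the meridians of $M_J$, $M_K$, and $M_{J\#K}$ all equal $c$ times the generator of $H_1(W;\Z)$, so Lemma \ref{inj-cond} applies verbatim, and the required injectivity of the localized Alexander modules as well as the vanishing of $\sigma(W)$ and $\sigma^{(2)}(W,\psi)$ are checked piece by piece with Mayer--Vietoris in $\Qt$- and $\K$-coefficients. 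Your instincts about what must be verified (injectivity of $i_c$ via Lemma \ref{inj-cond}, then $\sigma=\sigma^{(2)}=0$ via Lemmas \ref{rho-sg} and \ref{sg-bound}) are correct and match the paper's Steps 2 and 3; the missing ingredient is that Step 1 must produce a cobordism with complexity $c$, which the standard connected-sum cobordism does not do.
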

\begin{proof}
We first construct a cobordism $W$ between $M_{J}\amalg M_{K}$ and $-M_{J \# K}$. Then we show that for $M = M_J$, $M_K$, and $M_{J\# K}$, the induced map $i_c$ given in Proposition \ref{induced} is injective so that we obtain $\rho_{c, p}^{(1)}(J) + \rho_{c, p}^{(1)}(K) - \rho_{c, p}^{(1)}(J\# K)=\sigma^{(2)}\left(W, \psi:\pi_1(W)\rightarrow \frac{\pi_1(W)}{\pi_1(W)^{(2)}_p}\right) - \sigma (W)$ by the Lemma \ref{rho-sg}. See the commutative diagram below.
\begin{center}
\begin{tikzcd}
\pi_1(M) \ar{d} \ar{r}{\phi} & \frac{\pi_1(M)^{(1)}}{\pi_1(M)^{(2)}_{c, p}} \ar[hook]{d}{i_c} \\
\pi_1(W) \ar{r}{\psi} & \frac{\pi_1(W)^{(1)}}{\pi_1(W)^{(2)}_{c, p}}
\end{tikzcd}
\end{center}
To complete the proof, we show that $\sigma^{(2)}(W,\psi)$ and $\sigma (W)$ vanish. After construction, most of the remains are just routines, but the proof is long because our cobordism is obtained through several times of glueing. Readers who are familiar with this kind of homology computation may skip those parts. The schematic picture for $W$ is given in Figure \ref{cobordism}. For any knot $K$, the $0$-surgery for $K$ is denoted by $M_K$, and its meridian is denoted by $\mu_K$. We abuse notation by removing the minus sign for orientation-reversed manifolds on the top boundary of cobordisms until \textit{Step 3}.\\

\textit{Step 1. Construct a cobordism $W$ between $M_J \amalg M_K$ and $M_{J\# K}$.}\\

Let $V$ be the 4-manifold obtained by attaching a 1-handle to $M_J\times 1 \subset M_J\times I$ with its dotted circle as an unknot unlinked with $J$ in the Kirby diagram. Isotope the dotted circle to form the $(c, 1)$-cable pattern $P$. Now let $W_1$ be the 4-manifold obtained by attaching a $0$-framed 2-handle to $V$ along the circle $\alpha$ in the leftmost of Figure \ref{Kirby1}. By Kirby calculus illustrated as Figure \ref{Kirby1}, the top boundary is $M_{J_{c, 1}}$. This cobordism $W_1$ between $M_J$ and $M_{J_{c, 1}}$ is a special case of rational homology cobordisms between the 0-surgery for a knot and the 0-surgery for its satellite with a $\Z[\frac{1}{c}]$-slice pattern constructed in \cite{CFHH13}. As proved there, we have isomorphisms
\begin{align*}
H_1(M_J;\Q)&\cong H_1(W_1;\Q)\cong H_1(M_{J_{c, 1}};\Q),\\
H_2(W_1;\Q)&\cong H_2(M_P;\Q)\oplus H_2(M_J;\Q),
\end{align*}
where the first homology is generated by $c[\mu_{J_{c, 1}}]=[\mu_{J}]$.

\begin{figure}[ht!]
\centering
\includesvg{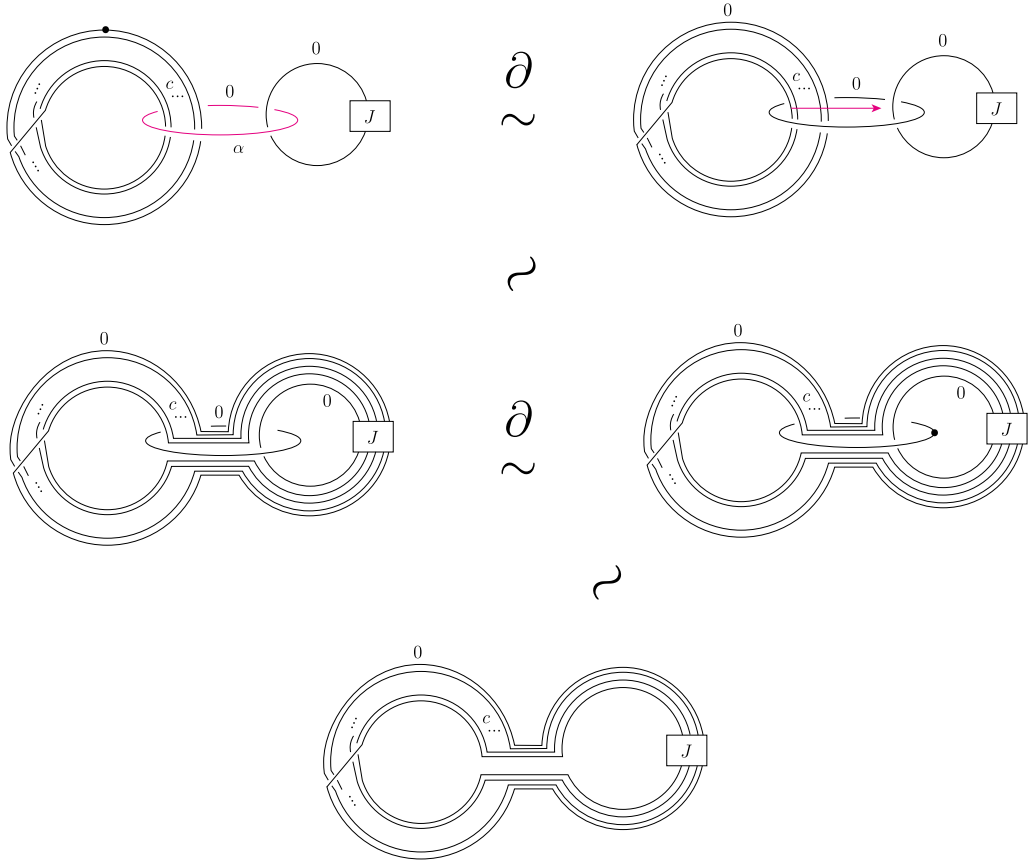}
\caption{Slide $c$ strands highlighted in the top right over the 0-framed 2-handle knotted by $J$ and cancel the 1-handle. The top boundary is $\bdry_+ W_1\cong M_{J_{c, 1}}$.}
\label{Kirby1}
\end{figure}

\begin{figure}[hb!]
\centering
\includesvg{./fig/W2.svg}
\caption{$\bdry_+ W_2\cong M_{(J\# K)_{c, 1}}$.}
\label{W2}
\end{figure}

Take a meridian $\mu_{J_{c, 1}}$ in $M_{J_{c, 1}}=\bdry_+ W_1$ and a meridian $\mu_{K}$ in $M_K\times 1 \subset M_K\times I$, and let $W_2$ be the 4-manifold obtained by glueing $W_1$ and $M_K\times I$ along tubular neighborhoods of a simple closed curve representing $c[\mu_{J_{c, 1}}]$ and $\mu_K$. This connected 4-manifold can be also obtained by attaching a 1-handle to join $W_1$ and $M_K\times I$ and a 0-framed 2-handle along the curve $\beta$ in Figure \ref{W2} to identify $c[\mu_{J_{c, 1}}]$ and $\mu_K$. By a similar Kirby calculus with Figure \ref{Kirby1}, we obtain the top boundary $\bdry_+ W_2$ as $M_{(J\# K)_{c, 1}}$, and hence, $W_2$ is a cobordism between $M_{J_{c, 1}}\amalg M_K$ and $M_{(J\# K)_{c, 1}}$. On the long exact sequence for the pair $(W_2, M_{J_{c, 1}}\amalg M_K)$,
\begin{equation*}
  \cdots \xrightarrow{j_n} H_n(W_2, M_{J_{c, 1}}\amalg M_K;\Q) \xrightarrow{\bdry_n} H_{n-1}(M_{J_{c, 1}}\amalg M_K;\Q) \xrightarrow{i_{n-1}} H_{n-1}(W_2;\Q) \xrightarrow{j_{n-1}}\cdots
\end{equation*}
the boundary map $\bdry_2$ sends the attached 2-handle, which is the generator, to $c[\mu_{J_{c, 1}}]-[\mu_K]$. Thus, we have
\begin{equation*}
  H_1(W_2;\Q)\cong \frac{H_1(M_{J_{c, 1}};\Q)\oplus H_1(M_K;\Q)}{(c[\mu_{J_{c, 1}}]-[\mu_K])} \cong H_1(M_{J_{c, 1}};\Q)\cong H_1(M_J;\Q) \cong H_1(M_K;\Q),
\end{equation*}
which are generated by $c[\mu_{J_{c, 1}}] = [\mu_J]=[\mu_K]$. Moreover, since $\bdry_2$ is injective, $j_2$ is the zero map, and then the inclusion induced map $i_2$ is surjective:
\begin{equation*}
  i_2: H_2(M_{J_{c,1}}\amalg M_K;\Q)\twoheadrightarrow H_2(W_2;\Q).
\end{equation*}

\begin{figure}[htb!]
\centering
\includesvg{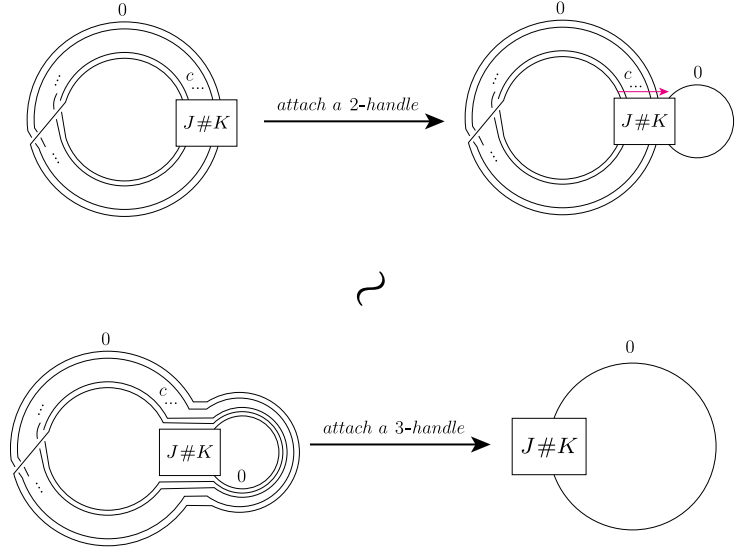}
\caption{After attaching $0$-framed 2-handle, slide $c$ strands of $(J\# K)_{c, 1}$ over the 2-handle to make it unknotted. Canceling the unknotted 2-handle by attaching 3-handle, $\bdry_+ W_3\cong M_{J\# K}$.}
\label{W3}
\end{figure}

Let $W_3$ be the rational homology cobordism from $M_{(J\# K)_{c, 1}}$ to $M_{J\# K}$ constructed in a similar but upside down way with $W_1$. In other words, the attached handles are now a 2-handle and a 3-handle as in Figure \ref{W3}. The Kirby calculus in Figure \ref{Kirby1} and Figure \ref{W3} are explicitly illustrated in \cite{DPR21}.

Glueing $W_3$ to $W_2$ along $M_{(J\#K)_{c,1}}$, the resulting manifold $W$ is a cobordism between  $M_J\amalg M_K$ and $M_{J\# K}$. By the Meyer-Vietoris sequence below,
\begin{equation*}
  \cdots \xrightarrow{j_n} H_n(W;\Q) \xrightarrow{\bdry_n} H_{n-1}(M_{(J\# K)_{c, 1}};\Q) \xrightarrow{i_{n-1}} H_{n-1}(W_2;\Q)\oplus H_{n-1}(W_3;\Q) \xrightarrow{j_{n-1}} \cdots.
\end{equation*}
We have an isomorphism
\begin{equation*}
  H_1(W;\Q)\cong \frac{H_1(W_2;\Q)\oplus H_1(W_3;\Q)}{([\mu_{(J\#K)_{c, 1}}]-c[\mu_{J\#K})]} \cong H_1(W_2;\Q) \cong H_1(W_3;\Q)\cong H_1(M_{J\#K};\Q).
\end{equation*}
The generator is $[\mu_{J\# K}]=c[\mu_{(J\# K)_{c, 1}}]=[\mu_J]=[\mu_K]$. Thus, the inclusion of each boundary component $M_J$, $M_K$, and $M_{J\#K}$ induces isomorphisms on the first rational homology.
Since $i_1$ is injective, the boundary map $\bdry_2$ is the zero map, so $j_2$ is surjective:
\begin{equation*}
  j_2:H_2(W_2;\Q)\oplus H_2(W_3;\Q) \twoheadrightarrow H_2(W;\Q).
\end{equation*}
Since we have seen that $H_2(M_J;\Q)\oplus H_2(M_K;\Q)$ surjects onto $H_2(W_2;\Q)$ and $H_2(M_{J\# K};\Q)$ surjects onto $H_2(W_3;\Q)$, we conclude that every second homology element of $W$ comes from its boundary, namely, the inclusion induced map of boundary is surjective:
\begin{equation*}
  H_2(\bdry W;\Q)\twoheadrightarrow H_2(W;\Q).
\end{equation*}

\begin{figure}[htb!]
\centering
\includesvg{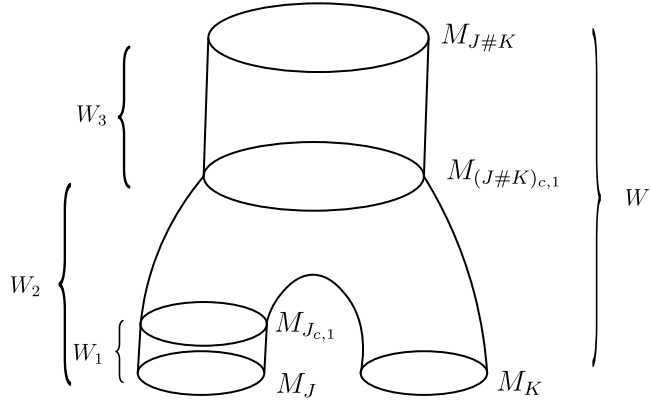}
\caption{A schematic picture of the cobordism $W$ between $M_J \amalg M_K$ and $M_{J\# K}$.}
\label{cobordism}
\end{figure}

\textit{Step 2. Check that $\frac{\pi_1(M)}{\pi_1(M)_{c, p}}\rightarrow \frac{\pi_1(W)}{\pi_1(W)_p}$ is injective for each boundary component $M$ of $W$.}\\

Notice that $H_1(M_K; \Qt) = \A_c (K)$ with complexity $c$, but $H_1(M_{(J\#K)_{c, 1}};\Qt)=\A((J\# K)_{c, 1})$ without complexity, i.e., complexity $c = 1$. In \textit{Step 1}, we have shown that each inclusion for $M=M_J, M_K$, and $M_{J\# K}$ into $W$ induces an isomorphism $H_1(M;\Q)\rightarrow H_1(W;\Q)$. For $\frac{\pi_1(M)}{\pi_1(M)^{(2)}_{c, p}}$ to inject into $\frac{\pi_1(W)}{\pi_1(W)^{(2)}_p}$, by Lemma \ref{inj-cond}, it is enough to check that each Alexander module $\A_{c, p}(M)$ injects to $\A_p(W)$.

We give an another description for $W_1$ in a similar way as we have done for $W_2$. Let $W_0$ be the complement of an unknotted slice disk $\Delta_P$ of the unknotted $(c, 1)$-cable pattern $P$ in $B^4$, which is bounded by the 0-surgery $M_P$. Just as we have identified the simple closed curve representing $c[\mu_{J_{c, 1}}]\subset \bdry_+ W_1$ and $\mu_K\subset M_K\times 1 \subset M_K\times I$, here we take a simple closed curve representing $c[\mu_P]$ in $M_P$ and the meridian $\mu_J$ in $M_J\times 1 \subset M_J\times I$. $W_1$ is the glueing result of $W_0$ and $M_J\times I$ along the tubular neighborhood of such curves $\nu(c\mu_P)=\nu(\mu_{J})\cong S^1\times D^2$. Then we have the following Mayer-Vietoris sequences with coefficient system $\Q[t^{\pm 1}]$:
\begin{equation*}
  \cdots \rightarrow H_n(S^1\times D^2) \rightarrow H_n(W_0)\oplus H_n(M_J\times I) \rightarrow H_n (W_1) \rightarrow \cdots
\end{equation*}
Note that $W_0$ is the 4-ball with a 1-handle attached, so $W_0$ is diffeomorphic to $S^1\times D^3$. Since $S^1\times D^2$ and $W_0$ have contractible infinite cyclic covers as $\R\times D^2$ and $\R\times D^3$ respectively, their homologies with $\Qt$-coefficient are trivial. Thus, 
\begin{equation*}
  H_n(W_1;\Qt) \cong H_n(M_J;\Qt)=\A_c(J).
\end{equation*}

\noindent Similarly, $W_2$ is $W_1 \cup (M_K\times I)$ with intersection $\nu(c\mu_{J_{c, 1}}) = \nu\mu_K\cong S^1\times D^2$. By the exact sequence below,
\begin{equation*}
  \cdots \rightarrow H_n(S^1\times D^2) \rightarrow H_n(W_1)\oplus H_n(M_K\times I) \rightarrow H_n (W_2) \rightarrow \cdots
\end{equation*}
we obtain isomorphisms:
\begin{align*}
H_n(W_2;\Qt) &\cong H_n(W_1;\Qt)\oplus H_n(M_K;\Qt)\\
&\cong H_n(M_J;\Qt)\oplus H_n(M_K;\Qt).
\end{align*}
Furthermore, by Poincare duality and the universal coefficient theorem,
\begin{align*}
0=H_n(W_2, M_J \amalg M_K;\Qt)&\cong H^{4-n}(W_2, M_{(J\# K)_{c, 1}};\Qt)\\
&\cong H_{4-n}(W_2, M_{(J\# K)_{c, 1}}; \Qt).
\end{align*}
Then, by the long exact sequence on the pair $(W_2, M_{(J\# K)_{c, 1}})$, we have
\begin{equation*}
  H_n(M_{(J\# K)_{c, 1}};\Qt)\cong H_n(W_2;\Qt).
\end{equation*}
In particular, the first homology $\A_c(J), \A_c(K)$, and $\A((J\#K)_{c, 1})$ inject into $\A(W_2)$. Since $\otimes R_p$ is exact, the localized Alexander module $\A_{c, p}(J), \A_{c,p}(K)$, and $\A_p((J\#K)_{c, 1})$ inject into $\A_p(W_2)$.

It remains to show that $\A_p(W_2)$ and $\A_{c,p}(M_{J\# K})$ inject into $\A_p(W)$. For the pair $(W, W_2)$, by excision, Poincare duality, and universal coefficient theorem as previous,
\begin{align*}
H_n(W, W_2;\Qt)&\cong H_n(W_3, M_{(J\# K)_{c, 1}};\Qt)\\
&\cong H_{4-n}(W_3, M_{J\# K};\Qt),
\end{align*}
but we can find $H_{4-n}(M_{J\# K};\Qt)\cong H_{4-n}(W_3;\Qt)$ in a similar Meyer-Vietoris sequence with the case for $W_1$ so that $H_n(W, W_2;\Qt)=0$. Thus, $H_n(W_2;\Qt)\cong H_n(W;\Qt)$. In particular, $\A_p(W_2)$ injects into $\A_p(W)$. Moreover, by an excision on the pair $(W, W_3)$, we have
\begin{equation*}
  H_n(W, W_3;\Qt)\cong H_n(W_2, M_{(J\#K)_{c, 1}};\Qt) = 0
\end{equation*}
so that
\begin{equation*}
  H_n(W;\Qt)\cong H_n(W_3; \Qt)\cong H_n(M_{J\# K};\Qt).
\end{equation*}
Therefore, by flatness of $R_p$, the desired condition for injectivity $\A_{c, p}(M) \hookrightarrow \A_p(W)$ for each $M=M_J, M_K$, and $M_{J\# K}$ holds.\\

\textit{Step 3. Compute the signature defect of $W$.}\\

Note that $\bdry W = M_J \amalg M_K \amalg -M_{J\# K}$. Since $\frac{\pi_1(M)}{\pi_1(M)^{(2)}_{c, p}}$ injects into $\frac{\pi_1(W)}{\pi_1(W)^{(2)}_p}$ for each $M=M_J, M_K$, and $-M_{J\# K}$, by Lemma \ref{rho-sg} and Proposition \ref{orientation-reversing}, we have
\begin{equation*}
  \rho_{c, p}^{(1)}(J) + \rho_{c, p}^{(1)}(K) - \rho_{c, p}^{(1)}(J\# K)=\sigma^{(2)}(W, \psi:\pi_1(W)\rightarrow \frac{\pi_1(W)}{\pi_1(W)_p}) - \sigma (W).
\end{equation*}

Let $\K$ be the fraction field of the Ore domain $\Q\left[\frac{\pi_1(W)}{\pi_1(W)_p^{2}}\right]$. Consider the first Meyer-Vietoris sequence in \textit{Step 2} with coefficient $\K$. As in \cite{Coc04},
\begin{equation*}
  H_*(S^1\times D^2;\K)\cong H_*(M_J;\K)\cong H_*(W_0;\K)\cong 0.
\end{equation*}
Thus, $H_*(W_1;\K)=0$, and since $H_*(M_K;\K)=0$ as well, it follows that $H_*(W_2;\K)=0$ from the second sequence in \textit{Step 2}. Since $W_3$ is constructed in the same way with $W_1$, it is also true that $H_*(W_3;\K)=0$ and hence,
\begin{equation*}
  H_2(W;\K)=0.
\end{equation*}
Thus, by Lemma \ref{sg-bound}, the $L^2$-signature $\sigma^{(2)}(W, \psi)$ vanishes. Moreover, while we compute the rational homology of $W$ in \textit{Step 1}, we have already seen that $H_2(W;\Q)$ are supported by the boundary components. Therefore, $\sigma(W)$ vanishes as well, so 
\begin{equation*}
  \rho_{c, p}^{(1)}(J) + \rho_{c, p}^{(1)}(K) - \rho_{c, p}^{(1)}(J\# K)=0.
\end{equation*}

\end{proof}

\begin{proposition}
  Let $-K$ be the mirror image of $K$. Then,
  \begin{equation*}
    \rho_{c, p}^{(1)}(-K) = -\rho_{c, p}^{(1)}(K).
  \end{equation*}
\end{proposition}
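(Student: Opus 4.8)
The plan is to mimic the classical computation showing $\rho^{(1)}(-K) = -\rho^{(1)}(K)$, using the fact that the mirror image $-K$ corresponds to reversing orientation on $S^3$ and hence on the $0$-surgery. Concretely, there is an orientation-reversing homeomorphism $M_K \to M_{-K}$ which is the identity on the underlying space $M_{-K} = -M_K$; it sends the meridian $\mu_K$ to $\mu_{-K}$ and induces an isomorphism $\pi_1(M_K) \to \pi_1(M_{-K})$ carrying the derived series to the derived series. First I would record that this homeomorphism induces an isomorphism of Alexander modules $\A(K) \xrightarrow{\cong} \A(-K)$ which intertwines the two module structures after inverting $t$ (equivalently, it is $\Qt$-linear once one accounts for $t \mapsto t^{-1}$). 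Since $p$ is a symmetric Laurent polynomial, the localizing set $R_p$ is preserved by $t \mapsto t^{-1}$, so this isomorphism descends to $\A_{c}(K) \cong \A_c(-K)$ and then to $\A_{c,p}(K) \cong \A_{c,p}(-K)$; consequently it carries $\pi_1(M_K)^{(2)}_{c,p}$ to $\pi_1(M_{-K})^{(2)}_{c,p}$ and induces an isomorphism
\begin{equation*}
  \frac{\pi_1(M_K)}{\pi_1(M_K)^{(2)}_{c,p}} \xrightarrow{\ \cong\ } \frac{\pi_1(M_{-K})}{\pi_1(M_{-K})^{(2)}_{c,p}}
\end{equation*}
compatible with the defining representations $\phi$ (up to the identification $M_{-K} = -M_K$).

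Second, I would invoke naturality of the von Neumann $\rho$-invariant: if $f\colon (M,\phi) \to (M',\phi')$ is an orientation-preserving homeomorphism with $\phi' \circ f_* = \phi$ then $\rho(M,\phi) = \rho(M',\phi')$ (this is immediate from the definition, since $f$ extends to a homeomorphism of the bounding $4$-manifold $W \to W'$ preserving the extended representation and both signatures). Here our $f$ is orientation-reversing, so it gives $\rho(M_{-K}, \phi) = \rho(-M_K, \phi)$, where on the right $\phi$ is the representation transported from $M_K$. Then by Proposition \ref{orientation-reversing},
\begin{equation*}
  \rho(-M_K, \phi) = -\rho(M_K,\phi) = -\rho_{c,p}^{(1)}(K),
\end{equation*}
and the left-hand side is $\rho_{c,p}^{(1)}(-K)$ by the identification of coefficient systems above. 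Combining these gives the claim.

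The main obstacle — really the only non-formal point — is checking that the homeomorphism-induced isomorphism genuinely matches up the localized coefficient systems $\pi_1(M_K)^{(2)}_{c,p}$ and $\pi_1(M_{-K})^{(2)}_{c,p}$, i.e. that the complexity parameter $c$ and the localizing polynomial $p$ behave correctly under $t \mapsto t^{-1}$. For $c$ this is fine because $\A_c$ is built from the ring map $t \mapsto t^c$, which commutes with $t \mapsto t^{-1}$; for $p$ it is exactly the hypothesis (standing throughout Section \ref{ss-2.3}) that $p$ is symmetric, so that inverting $t$ fixes the multiplicative set $R_p$ and hence the kernel defining $\A_{c,p}$. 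I would spell out this compatibility in a short diagram chase and then state that the rest is the two-line argument above; alternatively, one can phrase the whole thing via the cobordism $M_K \times I$ read with reversed orientation on one end, paralleling the proof of Proposition \ref{orientation-reversing}, but the homeomorphism argument is cleaner. No serious difficulty is anticipated.
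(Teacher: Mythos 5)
Your argument is correct and takes essentially the same route as the paper: identify $M_{-K}$ with $-M_K$ via an orientation-reversing homeomorphism and apply Proposition~\ref{orientation-reversing}. The paper's proof is a single line and leaves implicit the compatibility of the localized coefficient systems under the induced isomorphism of fundamental groups; your verification that $t\mapsto t^{-1}$ preserves the complexity-$c$ structure and, since $p$ is symmetric, the localizing set $R_p$ (hence carries $\pi_1(M_K)^{(2)}_{c,p}$ to $\pi_1(M_{-K})^{(2)}_{c,p}$) correctly fills in that unstated step.
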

\begin{proof}
  Since $S^3_0(-K)$ and $-S^3_0(K)$ are orientation-preserving diffeomorphic, it holds by Proposition \ref{orientation-reversing}.
\end{proof}

\begin{remark}\label{cable 1}
  Consider the $(c, 1)$-cable $K_{c, 1}$ of $K$. Recall the localized $\rho$-invariant $\rho_p$ defined in Section \ref{ss-2.3}. One can easily see that
  \begin{equation*}
    \rho^{(1)}_p(K_{c, 1}) = \rho^{(1)}_{1, p}(K_{c, 1}) = \rho^{(1)}_{c, p}(K).
  \end{equation*}
  The first equality is by definition. The second one can be derived from the cobordism $W_1$ constructed in the proof of Proposition \ref{additivity}, between $M_J$ and $M_{J_{c, 1}}$. See also \cite{CFHH13}.
\end{remark}

\section{Rational Slice Obstruction}\label{s-4}
In this section, we verify that our invariant defined in Section \ref{s-3} gives a rational slice obstruction. To use this, we impose a rather strong condition, called $(c, p)$-anisotropy, for the localized $\A_{c, p}$ to have no nontrivial isotropic submodule with respect to $\Bl_{c, p}$. For such a knot $K$, we show that if $K$ is rationally slice with complexity $c$, then $\rho_{c, p}(K)$ vanishes. Moreover, as Proposition \ref{loc obs}, it can obstruct any connected sum of those knots. After providing several sufficient conditions for $(c, p)$-anisotropy, we prove Theorem \ref{main B}. Throughout this section, $p$ is assume to be a Laurent polynomial with rational coefficients. We also assume that $p$ be symmetric so that the rational Blanchfield form $\Bl_c$ naturally extends to $\Bl_{c,p}$ on the localized Alexander module.

\begin{definition}\label{def: (c,p)-anisotropy} A knot $K$ is \textit{$(c, p)$-anisotropic} if $\A_{c,p}(K)$ has no nontrivial isotropic submodule with respect to the Blanchfield form $\Bl_{c,p}$.
\end{definition}

\begin{theorem}\label{main obstruction}
  Let $K_1, \cdots, K_n$ be $(c, p)$-anisotropic knots. If $K=K_1\#\cdots \# K_n$ is rationally slice with complexity $c$, then $\rho_{c,p}(K)=0$.
\end {theorem}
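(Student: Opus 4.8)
The plan is to adapt Davis's proof of Theorem~\ref{loc obs} (\cite[Theorem~4.1]{Dav12b}) by carrying the complexity $c$ through it, and to borrow the rational-slice vanishing argument from the proof of Theorem~\ref{CHL-style Q}. Since $K$ is rationally slice with complexity $c$, fix a rational homology $4$-ball $V$ and a locally flat disk $\Delta\subset V$ with $\bdry\Delta=K\subset S^3=\bdry V$, and set $W:=V\sm\nu\Delta$. Then $\bdry W=M_K$, the inclusion induces isomorphisms on $H_*(-;\Q)$ matching $H_*(S^1;\Q)$ and multiplication by $c$ on $H_1(-;\Z)/\mathrm{tors}$, so $M_K$ is a boundary component of $W$ with complexity $c$; in particular $H_2(W;\Q)=0$, hence $\sigma(W)=0$, and $W$ is a rational $n$-solution for $M_K$ for every positive (half) integer $n$.

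Next, localize the metabolizer. By \cite[Theorem~5.13]{Cha07} (used in the proof of Theorem~\ref{CHL-style Q}), $P:=\ker\!\big(j_c:\A_c(K)\to H_1(W;\Qt)\big)$ is Lagrangian for $\Bl_c$, and since $\otimes_{\Qt}R_p$ is exact, $P_p:=P\otimes R_p=\ker\!\big(j_c\otimes\mathrm{id}:\A_{c,p}(K)\to\A_p(W)\big)$ is Lagrangian for $\Bl_{c,p}$. Thus $\A_{c,p}(K)/P_p$ injects into $\A_p(W)$, so by the argument of Lemma~\ref{inj-cond} the representation $\bar\phi:\pi_1(M_K)\to\Gamma_K$ obtained by composing $\pi_1(M_K)^{(1)}\to\A_{c,p}(K)\to\A_{c,p}(K)/P_p$ (and keeping $\pi_1(M_K)/\pi_1(M_K)^{(1)}\cong\Z$) extends to $\psi:\pi_1(W)\to\Gamma_W:=\pi_1(W)/\pi_1(W)^{(2)}_p$ with $\Gamma_K\hookrightarrow\Gamma_W$; both groups are PTFA since the relevant module quotients are $\Q$-vector spaces. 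By Lemma~\ref{rho-sg}, $\rho(M_K,\bar\phi)=\sigma^{(2)}(W,\psi)-\sigma(W)$. Here $\sigma(W)=0$, and $H_2(W;\K)=0$ for $\K$ the skew field of $\Q\Gamma_W$ --- the standard computation for rational solutions, using $H_2(W;\Q)=0$, $\chi(W)=0$, and $H_0(W;\K)=H_1(W;\K)=0$ (cf.\ \cite{COT03,CHL09}) --- so $\sigma^{(2)}(W,\psi)=0$ by Lemma~\ref{sg-bound}, giving $\rho(M_K,\bar\phi)=0$.

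It remains to upgrade this to $\rho_{c,p}(K)=\rho(M_K,\phi_{c,p})=0$, where $\phi_{c,p}$ uses the full module $\A_{c,p}(K)$ in place of $\A_{c,p}(K)/P_p$; this is where the hypothesis that each $K_i$ (rather than $K$ itself, which need not be) is $(c,p)$-anisotropic is used. Writing $\A_{c,p}(K)=\bigoplus_{i=1}^n\A_{c,p}(K_i)$ orthogonally for $\Bl_{c,p}$, anisotropy of each summand forces $P_p\cap\A_{c,p}(K_i)=0$ for all $i$, while $\Bl_{c,p}\equiv 0$ on $P_p$; combining this with additivity (Proposition~\ref{additivity}) --- for instance, by comparing $\phi_{c,p}$ and $\bar\phi$ across the cobordism obtained from $M_K\times I$ by attaching $2$-handles along curves representing a $\Qt$-generating set of $P_p$ lifted to $\pi_1(M_K)^{(1)}$ (after clearing denominators), over which $\bar\phi$ extends and whose $L^2$- and ordinary signature defects vanish --- one identifies $\rho_{c,p}(K)$ with $\rho(M_K,\bar\phi)=0$. (If $\A_{c,p}(K)=0$ there is nothing to do: $\phi_{c,p}$ is then the abelianization and $\rho_{c,p}(K)=\rho^{(0)}(K)=\int_{S^1}\sigma_K=0$, since rationally slice knots have vanishing Levine--Tristram signature.)

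I expect this last step to be the main obstacle. For an anisotropic knot the metabolizer $P_p$ is forced to vanish and the argument is immediate, but once $K$ is a connected sum its Alexander module may acquire isotropic submodules, and the delicate point is to verify that replacing $\A_{c,p}(K)/P_p$ by the full module $\A_{c,p}(K)$ leaves the von Neumann $\rho$-invariant unchanged --- this is precisely where $(c,p)$-anisotropy of the individual factors, together with additivity, must do the work, paralleling the remark following Theorem~\ref{loc obs}.
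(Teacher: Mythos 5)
Your first two paragraphs are correct and aligned with the paper's ingredients: taking the slice disk complement $W=V\setminus\nu\Delta$, noting it has complexity $c$, identifying the Lagrangian $P=\ker j_c$ via \cite[Theorem~5.13]{Cha07}, localizing to get $P_p$ Lagrangian, and using Lemma~\ref{inj-cond}, Lemma~\ref{rho-sg}, and Lemma~\ref{sg-bound} to conclude $\rho(M_K,\bar\phi)=0$. The observation that $(c,p)$-anisotropy of each $K_i$ forces $P_p\cap\A_{c,p}(K_i)=0$ is also correct.

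The gap is exactly where you flag it, and your proposed repair does not close it. From $P_p\cap\A_{c,p}(K_i)=0$ for all $i$ you cannot conclude $P_p=0$ (the diagonal Lagrangian in $\A_{c,p}(J)\oplus\A_{c,p}(-J)$ is the standard counterexample), so $\bar\phi$ and $\phi_{c,p}$ genuinely differ and you must compare two different representations on $M_K$. The von Neumann $\rho$-invariant has a subgroup property but no quotient property, so the surjection $\pi_1(M_K)/\pi_1(M_K)^{(2)}_{c,p}\twoheadrightarrow\pi_1(M_K)/\ker\bar\phi$ gives no equality of $\rho$'s. The 2-handle cobordism you sketch is obstructed at the start: attaching $2$-handles along curves representing a generating set of $P_p$ kills under $\phi_{c,p}$ precisely the elements of $P_p$, so $\phi_{c,p}$ does not inject into the fundamental group quotient of that cobordism and Lemma~\ref{rho-sg} cannot be applied to the $M_K$ side with $\phi_{c,p}$. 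Even granting an extension of $\bar\phi$ over the cobordism, you would at best recompute $\rho(M_K,\bar\phi)$, not $\rho_{c,p}(K)$.

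The paper avoids this comparison entirely by not working with $M_K$ as the only boundary. It first applies additivity, $\rho_{c,p}^{(1)}(K)=\sum_i\rho_{c,p}^{(1)}(K_i)$, and then builds a single $4$-manifold $X=W\cup_{M_K}E$ with $\partial X=\amalg_i M_{K_i}$, where $W$ is the standard connected-sum cobordism (gluing the $M_{K_i}\times I$ along meridian tubes) and $E$ is the slice disk complement. The injectivity hypothesis of Lemma~\ref{inj-cond} is then checked \emph{at each $M_{K_i}$}: Lemma~\ref{ker=isotropic} shows $\ker\bigl(\A_{c,p}(K_i)\to\A_p(X)\bigr)$ is isotropic for $\Bl_{c,p}$, hence zero by $(c,p)$-anisotropy of $K_i$. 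This is the mechanism by which anisotropy of the \emph{summands} substitutes for anisotropy of $K$ itself; it sidesteps any attempt to relate $\phi_{c,p}$ to the quotient representation on $M_K$. The signature defect of $X$ then vanishes by the same $H_*(X;\K)=0$ and $H_2(\partial X;\Q)\twoheadrightarrow H_2(X;\Q)$ arguments you already use for $W$.
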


\begin{proof}
Suppose that $K=K_1 \#\cdots \# K_n$ with $(c,p)$-anisopropic $K_i$, possibly $K_i=\pm K_j$. Let $M_i = M_{K_i}$. Here we similarly go on with the proof of Proposition \ref{additivity} by constructing a 4-manifold $X$ bounded by $M_1,\cdots M_n$ satisfying certain properties. More precisely, the resulting $X$ is required to satisfy the condition in the Lemma \ref{inj-cond} so that $\rho_{c, p}^{(1)}(K)=\sigma^{(2)}(X, \psi)-\sigma(X)$. After that, it is easily verified that the signature defect for $X$ vanishes.\\

\textit{Step 1. Construct a 4-manifold $X$ bounded by $M_1,\cdots, M_n$.}\\

Take $M_i\times I$ and let $\mu_i$ be a meridian of $M_i\times 1\subset M_i\times I$ for each $i$. By identifying all tubular neighborhoods of $\mu_i$, we obtain a cobordism $W$ between $M_1\amalg \cdots \amalg M_n$ and $M_K$. This can be also described as the result attaching $n-1$ copies of a tube $S^1\times B^2 \times I$, say $V_i \times I$ along $V_i \times 0 = \nu\mu_i$ and $V_{i} \times 1 = \nu\mu_{i+1}$. Using a simple Mayer-Vietoris argument, one can check that the inclusion of each boundary component induces an isomorphism. See \cite[Theorem 4.1]{Dav12b} for the detail.
\begin{equation*}
  H_1(M_i;\Q)\cong H_1(W;\Q)\cong H_1(M_K;\Q).
\end{equation*}
which is generated by $[\mu_1]=\cdots =[\mu_n]=[\mu_K]$. We also obtain the following surjection:
\begin{equation*}
  H_2(\amalg_{i=1}^n M_i;\Q)\oplus H_2(M_K;\Q)\twoheadrightarrow H_2(W;\Q).
\end{equation*}
%\begin{center}
%\begin{tikzcd}
%H_2(\amalg_{i=1}^n M_i;\Q)\oplus H_2(M_K;\Q)\ar[twoheadrightarrow]{r} &H_2(W;\Q).
%\end{tikzcd}
%\end{center}

Let $E$ be a slice disk complement for $K$ in a rational 4-ball. Then $H_*(E;\Q)\cong H_*(S^1;\Q)$ and $H_1(E;\Q)\cong H_1(M_K;\Q)$ generated by $c[\mu_K]$. Glueing $W$ and $E$ along $M_K$, let $X$ be the resulting 4-manifold with $\bdry X=M_1\amalg \cdots \amalg M_n$. Then $X$ satisfies the required conditions for injectivity on the first and surjectivity on the second homology with $\Q$ coefficient by the Meyer-Vietoris sequence for $X = W \cup E$.\\

\textit{Step 2. Show that $\frac{\pi_1(M_i)}{\pi_1(M_i)_{c, p}}\rightarrow \frac{\pi_1(W)}{\pi_1(W)_p}$ is injective.}\\

It is enough to show that the localized Alexander module $\A_{c, p}(K_i)$ injects into $\A_p(X)$ by inclusion induced map. It comes from the following lemma.

\begin{lemma}\label{ker=isotropic}
The kernel of the inclusion induced map $\A_{c, p}(K_i)\rightarrow \A_p(X)$ is isotropic with respect to the localized Blanchfield form $\Bl_{c, p}$ on $\bdry X$.
\end{lemma}
\begin{proof}
Note that every isomorphism before localization is preserved since $\otimes R_p$ is exact. Consider the following commutative diagram. Each map is the inclusion induced map.
\begin{center}
\begin{tikzcd}
  \A_{c, p}(K_i) \ar{d}[swap]{\cong} \ar{r}{i_*\otimes id} & \A_p(X)\\
\A_p(W)\\
\A_{c, p}(K) \ar{u}{\cong} \ar{r}{j_*\otimes id} & \A_p(E) \ar{uu}[swap]{\cong}
\end{tikzcd}
\end{center}
Via the vertical isomorphisms, the statement is equivalent with that the kernel of $j_*\otimes id$ is isotropic with respect to $\Bl_{c, p}$ on $M_K$. Let $f(t), g(t)$ be elements in that kernel. Then $f(t)=\frac{f_1(t)}{f_2(t)}$ and $g(t)=\frac{g_1(t)}{g_2(t)}$ where $f_1(t), g_1(t)\in \A_c(K)$ and each of $f_2(t), g_2(t)\in\Qt$ is coprime with $p$. Since $\ker(j_*\otimes id) = (\ker j_*)\otimes {R_p}$, $f_1(t)$ and $g_1(t)$ should lie in the kernel $P = \ker j_*$ so that $\Bl_c(f_1(t), g_1(t))=0$.

By \cite{COT03}, since $E$ is a rational $(1.5)$-solution, $P$ is isotropic with respect to the Blanchfield form $\Bl_c$ on $\A_c(K)$. Choose $k(t)\in \Qt$ such that it represents
\begin{equation*}
  \Bl_c(f_1(t), g_1(t))=[k(t)]=0\in \frac{\Q(t)}{\Qt}.
\end{equation*}
Now we compute the localized Blanchfield form $\Bl_{c, p}(f(t), g(t))$ on $\A_{c, p}(K)$:
\begin{align*}
  \Bl_{c,p}(f(t),g(t))&=\frac{1}{f_2(t)}\Bl_c(f_1(t), g_1(t))\frac{1}{\overline{g_2}(t)}\\
&=\left[ \frac{k(t)}{f_2(t)g_2(t^{-1})} \right] = 0.
\end{align*}
The last equality comes from the fact that $\frac{k(t)}{f_2(t)g_2(t^{-1})}\in R_p$.
\end{proof}
If the kernel of $\A_{c,p}(K_i)\rightarrow \A_p(X)$ were nontrivial, then, since it is isotropic by Lemma \ref{ker=isotropic}, it contradicts with the $(c, p)$-anisotropy assumption for each $K_i$. Thus, each $\A_{c, p}(K_i)$ successfully injects into $\A_p(X)$. Now by Lemma \ref{inj-cond}, $\frac{\pi_1(M_i)}{\pi_1(M_i)^{(2)}_{c, p}}$ injects into $\frac{\pi_1(X)}{\pi_1(X)^{(2)}_p}$. By Proposition \ref{additivity} (additivity) and Lemma \ref{rho-sg}, the invariant is obtained by the signature defect of $X$ as:
\begin{align*}
\rho^{(2)}_{c, p}(K)&= \rho^{(2)}_{c, p}(K_1)+\cdots +\rho^{(2)}(K_n)\\
&=\sigma^{(2)}\left(X, \psi:\pi_1(X)\rightarrow \frac{\pi_1(X)}{\pi_1(X)_p^{(1)}}\right)-\sigma (X).
\end{align*}\\

\textit{Step 3. Compute the signature defect of $X$.}\\

As \cite[Lemma 4.9]{Dav12b}, $H_*(X;\K)=0$, where $\K$ is the fraction field of the Ore domain $\Q\left[\frac{\pi_1(X)}{\pi_1(X)_p^{2}}\right]$. Thus, by Lemma \ref{sg-bound}, the $L^2$-signature $\sigma^{(2)}(X, \psi)$ vanishes. Since $H_2(\bdry X;\Q)\rightarrow H_2(X;\Q)$ is surjective, the ordinary signature $\sigma(X)$ vanishes as well.
\end{proof}

\begin{remark}\label{cable 2}
  Recall $\rho^{(1)}_{1, p}(K_{c, 1}) = \rho^{(1)}_{c, p}(K)$ as in Remark \ref{cable 1}. Thus, if a $(c, p)$-anisotropic knot $K$ has $\rho^{(1)}_{c, p}(K)\neq 0$, then it also obstructs $K_{c, 1}$ from being \textit{strongly rationally slice}, namely rationally slice with complexity $1$. This reminds us of the fact that $K$ is rationally slice with complexity $c$ if and only if $K_{c, 1}$ is strongly rationally slice \cite{CFHH13}.
\end{remark}

Now we provide a sufficient condition for $K$ to be $(c, p)$-anisotropic.
\begin{proposition}\label{c,p-anisotropy condition}
Let $K$ be a knot and $\Delta_K$ be its Alexander polynomial. If each factor of $p$ is symmetric and divides $\Delta_K(t^c)$ with multiplicity at most $1$, then $K$ is $(c, p)$-anisotropic.
\end{proposition}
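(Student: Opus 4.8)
\noindent\emph{Proof strategy.} The plan is to read the statement off the module structure of $\A_{c,p}(K)$. After localizing at $p$ this is a finitely generated torsion module over the PID $R_p$ (a localization of $\Qt$), and I would study it prime by prime, using that $\Bl_{c,p}$ is a nonsingular linking form. Since $R_p$ depends only on the set of irreducible factors of $p$, I may assume $p\doteq p_1\cdots p_r$ with the $p_i$ distinct, irreducible and (by hypothesis) symmetric, and $v_{p_i}(\Delta_K(t^c))\le 1$ for each $i$. As $\A_c(K)$ is finitely generated, $\Qt$-torsion of order $\Delta_K(t^c)$, and $\otimes_{\Qt}R_p$ is exact, the primary decomposition of $\A_{c,p}(K)$ over $R_p$ reads
\begin{equation*}
  \A_{c,p}(K)=\bigoplus_{i=1}^{r} A_i,
\end{equation*}
where $A_i$ is the $p_i$-primary summand, of order $p_i^{m_i}$ with $m_i=v_{p_i}(\Delta_K(t^c))\le 1$. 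In particular $p_iA_i=0$, so each $A_i$ is a vector space over the field $\mathbb F_i:=R_p/(p_i)$ of dimension $m_i\le 1$.

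Next I would verify that $\Bl_{c,p}$ is block-diagonal for this decomposition. For $x\in A_i$, $y\in A_j$ with $i\ne j$, we have $p_i\cdot\Bl_{c,p}(x,y)=\Bl_{c,p}(p_ix,y)=0$ and, since $p_j$ is symmetric, $p_j\cdot\Bl_{c,p}(x,y)=\Bl_{c,p}(x,p_jy)=0$; as $p_i,p_j$ are coprime in $R_p$, this forces $\Bl_{c,p}(x,y)=0$ in $\Q(t)/R_p$. Hence $\Bl_{c,p}$ restricts to a form on each $A_i$, and this restriction is nonsingular: $\Bl_{c,p}$ itself is nonsingular, being the localization of the nonsingular form $\Bl_c$ on the finitely generated module $\A_c(K)$ — tensoring the adjoint isomorphism $\A_c(K)\xrightarrow{\cong}\Hom_{\Qt}(\A_c(K),\Q(t)/\Qt)$ with the flat module $R_p$ yields the adjoint of $\Bl_{c,p}$ — and a nonsingular form restricted to an orthogonal direct summand stays nonsingular.

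To finish, let $P\subset\A_{c,p}(K)$ be an isotropic submodule. Being a submodule of a torsion $R_p$-module, $P$ respects the primary decomposition: $P=\bigoplus_i(P\cap A_i)$. Each $P\cap A_i$ is an $\mathbb F_i$-subspace of the at most one-dimensional space $A_i$, hence equals $0$ or $A_i$. If $P\cap A_i=A_i\ne 0$, then $\Bl_{c,p}$ vanishes identically on $A_i\times A_i\subset P\times P$, contradicting that $\Bl_{c,p}|_{A_i}$ is nonsingular on the nonzero space $A_i$. So $P\cap A_i=0$ for all $i$, giving $P=0$; thus $K$ is $(c,p)$-anisotropic. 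This is the direct analogue of Proposition \ref{p-anisotropy condition}; alternatively, it follows from that proposition applied to the $(c,1)$-cable $K_{c,1}$, since $\A_{c,p}(K)\cong\A_p(K_{c,1})$ compatibly with the Blanchfield forms (as in Remark \ref{cable 1}, via the cobordism $W_1$ of Proposition \ref{additivity}) and the cable pattern being unknotted gives $\Delta_{K_{c,1}}(t)\doteq\Delta_K(t^c)$.

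The argument is formal once the two structural facts are set up, so the only delicate points are: confirming $\Bl_{c,p}$ is nonsingular with values in $\Q(t)/R_p$, i.e. that $(\Q(t)/\Qt)\otimes_{\Qt}R_p\cong\Q(t)/R_p$ and that localizing the adjoint of $\Bl_c$ at the symmetric polynomial $p$ preserves the isomorphism; and ensuring the primary decomposition is self-dual under the involution $t\mapsto t^{-1}$ — which is precisely why the hypothesis demands that each factor of $p$, and not merely $p$ itself, be symmetric. Both are routine, and the rest is the structure theory of finitely generated torsion modules over a PID.
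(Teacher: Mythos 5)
Your proof is correct, but it takes a genuinely different primary route from the paper's. The paper disposes of Proposition \ref{c,p-anisotropy condition} in two lines: the pair $(\A_c(K),\Bl_c)$ is identified with $(\A(K_{c,1}),\Bl)$, $\Delta_K(t^c)=\Delta_{K_{c,1}}(t)$, and then Proposition \ref{p-anisotropy condition} (the $c=1$ case, which the paper states without proof) applies to $K_{c,1}$. You instead give a direct structure-theoretic argument over the PID $R_p$: primary decomposition of $\A_{c,p}(K)$ into at-most-one-dimensional $\mathbb F_i$-vector spaces $A_i$, block-diagonality of $\Bl_{c,p}$ (which is where the hypothesis that each $p_i$ is symmetric is actually consumed), nonsingularity of each restriction via flat base change of the adjoint isomorphism, and then the observation that a submodule of a torsion module splits along the primary decomposition so any isotropic $P$ must be zero. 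You then note the cable reduction as an alternative at the end. The trade-off is clear: your direct argument is longer but in effect supplies a proof of the black-boxed Proposition \ref{p-anisotropy condition} (just run it with $c=1$), whereas the paper's proof is shorter but entirely dependent on that unproved lemma. One small point worth flagging if you keep the direct route: the adjoint of $\Bl_c$ is sesquilinear with respect to $t\mapsto t^{-1}$, so strictly speaking it is an isomorphism onto the conjugate dual $\overline{\Hom_{\Qt}(\A_c(K),\Q(t)/\Qt)}$; since the involution is a ring isomorphism and $R_p$ is stable under it (its primes being symmetric), this does not affect the flat-base-change step, but it is the kind of thing a careful referee would want said out loud.
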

\begin{proof}
Since $(\A_c(K), \Bl_c)$ is equivalent with $(\A(K_{c, 1}), \Bl)$, the $(c, p)$-anisotropy condition of $K$ is equivalent with the $p$-anisotropy condition of $K_{c, 1}$. Also note that $\Delta_K(t^c)=\Delta_{K_{c, 1}}(t)$. Thus, the result follows from the fact that $p$-anisotropy of $K_{c, 1}$ holds by the Lemma \ref{p-anisotropy condition}.
\end{proof}

\noindent Then the followings are clear.
\begin{proposition}\label{c,p-anisotropy condition 2}
Suppose that each factor of $p$ is symmetric. $K$ is $(c,p)$-anisotropic if one of the following holds:
\begin{itemize}
\item [(a)] $\Delta_K(t^c)$ is squarefree.
\item [(b)] $\Delta_K(t^c)$ and $p$ are coprime.
\end{itemize}
\end{proposition}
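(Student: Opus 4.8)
The plan is to deduce both parts directly from Proposition \ref{c,p-anisotropy condition}, whose hypothesis asks that every symmetric irreducible factor of $p$ divide $\Delta_K(t^c)$ with multiplicity at most $1$ in the PID $\Qt$. In each of the two cases this hypothesis holds for free, so the only work is the divisibility bookkeeping; there is essentially no genuine obstacle, and the argument will be only a few lines.

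For part (a), suppose $\Delta_K(t^c)$ is squarefree. Let $q$ be an irreducible factor of $p$; it is symmetric by assumption. If $q$ does not divide $\Delta_K(t^c)$, its multiplicity there is $0$; if it does divide, then because $\Delta_K(t^c)$ has no repeated prime factors, its multiplicity is exactly $1$. Either way the multiplicity is at most $1$, so Proposition \ref{c,p-anisotropy condition} applies and $K$ is $(c,p)$-anisotropic. The only point to be careful about is the convention that a non-dividing factor contributes multiplicity $0$, which still counts as ``at most $1$''.

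For part (b), suppose $(p, \Delta_K(t^c)) = 1$. Then every irreducible factor of $p$ is coprime to $\Delta_K(t^c)$, hence divides it with multiplicity $0 \le 1$, and Proposition \ref{c,p-anisotropy condition} again gives $(c,p)$-anisotropy. A cleaner, more conceptual argument is also available: coprimality forces $\A_{c,p}(K) = \A_c(K) \otimes R_p = 0$, by the same computation used in the proof of Proposition \ref{localization property}(a) --- since $\A_c(K)$ is annihilated by $\Delta_K(t^c)$, which is a unit in $R_p$, every simple tensor $x \otimes f$ equals $x\Delta_K(t^c) \otimes f/\Delta_K(t^c) = 0$. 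The zero module has no nontrivial submodules whatsoever, so the defining condition of $(c,p)$-anisotropy is satisfied vacuously.

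Finally, one should note that the standing assumption that each factor of $p$ is symmetric is exactly what makes $\Bl_{c,p}$ well defined on $\A_{c,p}(K)$, as recorded at the start of Section \ref{s-4}, so the assertion ``$K$ is $(c,p)$-anisotropic'' already makes sense in both cases before the argument begins.
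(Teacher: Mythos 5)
Your proposal is correct, and it matches the argument the paper intends but leaves unwritten (the paper simply says ``the followings are clear'' after Proposition~\ref{c,p-anisotropy condition}). Both parts reduce to the observation that an irreducible factor of $p$ has multiplicity $0$ or $1$ in $\Delta_K(t^c)$ under either hypothesis, and your alternative argument for (b) --- that coprimality kills $\A_{c,p}(K)$ outright, just as in Proposition~\ref{localization property}(a) --- is a valid and tidy complement, though not a genuinely different route.
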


\noindent A stronger but simpler condition is irreducibility for $\Delta_K(t^c)$, which follows from Proposition \ref{c,p-anisotropy condition 2}(a). We say a polynomial $p(t)$ is \textit{strongly irreducible} if $p(t^c)$ is irreducible for all positive integers $c$. $p(t)$ and $q(t)$ are said to be \textit{strongly coprime} if $p(t^c)$ and $q(t^d)$ are coprime for all positive integers $c$, $d$. 
%
%\begin{remark} Beyond the above proposition, there are many more choice for $p$ with relaxed condition for a knot $K$ to be $(c,p)$-anisotropic. For example, if any factor of $p$ is symmetric and each factor divides $\Delta_K(t^c)$ at most multiplicity $1$, then one can show that $K$ is $(c, p)$-anisotropic by almost same argument in \cite{Dav12b}. For our purpose, above condition is enough.
%\end{remark}

\begin{proposition}\label{strong irr=>(c,p)-aniso}
If $\Delta_K$ is strongly irreducible, then $K$ is $(c, \Delta_K(t^c))$-anisotropic for all $c$.
\end{proposition}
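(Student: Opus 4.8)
The plan is to deduce this immediately from the sufficient condition already established in Proposition~\ref{c,p-anisotropy condition} (or its corollary Proposition~\ref{c,p-anisotropy condition 2}(a)), by feeding in the polynomial $p = \Delta_K(t^c)$. First I would observe that, by the definition of strong irreducibility, $\Delta_K(t^c)$ is irreducible for every positive integer $c$; in particular it is its own unique irreducible factor, up to units. Next I would check that this single factor is symmetric: since $\Delta_K(t)$ is (up to units) symmetric, $\Delta_K(t) \doteq \Delta_K(t^{-1})$, and substituting $t \mapsto t^c$ preserves this, so $\Delta_K(t^c) \doteq \Delta_K(t^{-c})$, giving a symmetric polynomial.

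The remaining point is the multiplicity condition: the unique factor $\Delta_K(t^c)$ must divide $\Delta_K(t^c)$ with multiplicity at most $1$, which is obvious since $\Delta_K(t^c)$ is irreducible and hence not divisible by its own square. With all hypotheses of Proposition~\ref{c,p-anisotropy condition} verified for $p = \Delta_K(t^c)$, that proposition yields that $K$ is $(c, \Delta_K(t^c))$-anisotropic. Since $c$ was an arbitrary positive integer, the conclusion holds for all $c$, as claimed. Alternatively, one can phrase the same argument through Proposition~\ref{c,p-anisotropy condition 2}(a): $\Delta_K(t^c)$ being irreducible is in particular squarefree.

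There is essentially no genuine obstacle here; this proposition is a packaging statement whose entire content is that strong irreducibility is precisely tuned to satisfy the hypothesis ``each factor of $p$ is symmetric and divides $\Delta_K(t^c)$ with multiplicity at most $1$'' when $p$ is taken to be $\Delta_K(t^c)$ itself. The only subtlety worth a sentence of care is the symmetry claim for the substituted polynomial, and perhaps a remark that we may, as usual, normalize $\Delta_K$ to be a genuine symmetric Laurent polynomial (rather than merely symmetric up to a unit $\pm t^k$) so that it is a legitimate choice of $p$ in the sense fixed at the start of Section~\ref{s-4}. I expect the proof in the paper to be two or three lines long.
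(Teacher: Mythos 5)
Your argument is correct and matches the paper's intended reasoning: the paper introduces the proposition immediately after the sentence noting that irreducibility of $\Delta_K(t^c)$ is a stronger, simpler sufficient condition ``which follows from Proposition~\ref{c,p-anisotropy condition 2}(a),'' and gives no further proof, so your verification (irreducible $\Rightarrow$ squarefree, plus the routine check that $\Delta_K(t^c)$ remains symmetric under $t\mapsto t^c$) is exactly what is intended.
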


Now we are ready to prove the obstructive Theorem \ref{main B} which is stated without $(c, p)$-anisotropy condition.
\begin{reptheorem}{main B}
  Let $\{K_i\}$ be a set of knots whose order is finite in $\AC_\Q$ and $\Delta_i$ be the Alexander polynomial of $K_i$. If $\Delta_i$'s are strongly irreducible and pairwisely strongly coprime, and $\rho^{(1)}(K_i)\neq 0$, then $\{K_i\}$ is linearly independent in $\C_\Q$.
\end{reptheorem}

\begin{proof}
Since $K_i$ has finite order in $\AC_\Q$, the Levine-Tristram signature function $\sigma_{K_i}(\omega) = 0$ by \cite{CO93}, \cite{CK02}. Thus, by Theorem \ref{thm: zeroth}, the zeroth order signature vanishes:
\begin{equation*}
  \rho^{(0)}(K_i) = 0.
\end{equation*}

Fix any index $j$, and let $p(t) = \Delta_j(t^c)$. Note that $K_i$ is $(c, p)$-anisotropic for either $i=j$ or $i\neq j$ by Proposition \ref{c,p-anisotropy condition 2}(b) and Proposition \ref{strong irr=>(c,p)-aniso}. By Proposition \ref{localization property}, 
\begin{align*}
\rho_{c, p}(K_i)&=\rho^{(0)}(K_i) = 0 \text{ if }i \neq j. \\
\rho_{c, p}(K_i)&=\rho^{(1)}(K_i) \neq 0 \text{ if }i = j.
\end{align*}

Given $J_1, \cdots, J_n$ arbitrarily chosen in $\{K_i\}$, suppose that $a_1 J_1 \#\cdots \# a_n J_n$ is rationally slice with some complexity $c$. Let $p(t) = \Delta_{k}(t^c)$ for a fixed $k\in\{1,\cdots, n\}$. Then by Theorem \ref{main obstruction}, Proposition \ref{additivity} and the above result for $\rho_{c, p}$,
\begin{align*}
\rho_{c, p}^{(1)}(a_1 J_1 \#\cdots a_n J_n)&=a_1\rho_{c, p}^{(1)}(J_1) + \cdots a_n \rho_{c,p}^{(1)}(J_n)\\
&=\sum\limits_{i\neq k}a_i\rho^{(0)}(J_i)+a_k\rho^{(1)}(J_k)=0.
\end{align*}
Since $\rho^{(0)}(K_i)=0$ and $\rho^{(1)}(K_i)\neq 0$ for all $i$, we have $a_k=0$. In this way, $a_1=\cdots = a_n=0$.
\end{proof}

On the other hand, below corollary immediately follows from Remark \ref{cable 2} in the exactly same way.
\begin{corollary}
  Suppose $\{K_i\}_{i\in I}$ satisfies all conditions in above Theorem \ref{main B}. Then $\{(K_i)_{c, 1}:i \in I, c\in \Z^+\}$ is linearly independent in the classical knot concordance group $\C$.
  \label{cable 3}
\end{corollary}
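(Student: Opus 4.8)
The plan is to derive Corollary \ref{cable 3} directly from Theorem \ref{main B} together with Remark \ref{cable 2}, reducing linear independence of the cables in $\C$ to the already-established linear independence of the $\{K_i\}$ in $\C_\Q$ via the localized invariant $\rho^{(1)}_{c,p}$. The key translation is the identity $\rho^{(1)}_{1,p}(K_{c,1}) = \rho^{(1)}_{c,p}(K)$ from Remark \ref{cable 1} (see also Remark \ref{cable 2}), which lets us compute the invariant of a cable from that of the companion knot.

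First I would set up notation: let $\{(K_{i_1})_{c_1,1}, \dots, (K_{i_m})_{c_m,1}\}$ be an arbitrary finite subcollection (the $i_k$ need not be distinct, since we allow distinct cabling parameters on the same companion), and suppose $\sum_{k=1}^m a_k (K_{i_k})_{c_k,1}$ is slice in $S^3$, hence rationally slice with complexity $1$. Group the summands by companion: for each distinct companion index $j$ appearing among the $i_k$, collect the terms $(K_j)_{c,1}$ for the various $c$'s. The crucial point is that a cable $(K_j)_{c,1}$ has Alexander polynomial $\Delta_j(t^c)$, so I would use strong irreducibility of $\Delta_j$ and pairwise strong coprimality to see that, after fixing a target polynomial $p(t) = \Delta_{j_0}(t^{c_0})$ for one chosen summand $(K_{j_0})_{c_0,1}$, every other cable $(K_j)_{c,1}$ in the sum is $p$-anisotropic with $\Delta_j(t^c)$ coprime to $p$ — here I invoke that $\Delta_j(t^c)$ and $\Delta_{j_0}(t^{c_0})$ are coprime when $(j,c) \neq (j_0,c_0)$, which follows from strong coprimality if $j \neq j_0$ and from strong irreducibility (distinct cyclotomic-type factors) if $j = j_0$ but $c \neq c_0$. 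By Proposition \ref{p-anisotropy condition} each cable is $p$-anisotropic, so Theorem \ref{loc obs} (Davis's localized obstruction, or equivalently our Theorem \ref{main obstruction} with $c=1$) applies: $\rho^{(1)}_p\big(\sum a_k (K_{i_k})_{c_k,1}\big) = 0$.

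Next, by additivity of $\rho^{(1)}_{1,p}$ (Proposition \ref{additivity} with $c=1$) this sum splits as $\sum_k a_k \rho^{(1)}_{1,p}\big((K_{i_k})_{c_k,1}\big) = 0$, and by Remark \ref{cable 1} each term equals $a_k \rho^{(1)}_{c_k,p}(K_{i_k})$. Now Proposition \ref{localization property} evaluates these: for the summand with $(j,c) = (j_0,c_0)$ we get $\rho^{(1)}_{c_0,p}(K_{j_0}) = \rho^{(1)}(K_{j_0}) \neq 0$ by hypothesis, while for every other summand $\Delta_{i_k}(t^{c_k})$ is coprime to $p$, so $\rho^{(1)}_{c_k,p}(K_{i_k}) = \rho^{(0)}(K_{i_k}) = 0$ (the zeroth-order signature vanishes because $K_{i_k}$ has finite order in $\AC_\Q$, exactly as in the proof of Theorem \ref{main B}). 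Hence the relation collapses to $a_{k_0}\rho^{(1)}(K_{j_0}) = 0$, forcing $a_{k_0} = 0$; running this over each choice of distinguished summand kills every coefficient, so $a_1 = \cdots = a_m = 0$.

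The main obstacle — really the only nontrivial point — is verifying the coprimality bookkeeping when the \emph{same} companion $K_j$ is cabled with two different parameters $c \neq c'$: one must check $\Delta_j(t^c)$ and $\Delta_j(t^{c'})$ are coprime. Strong irreducibility gives that each of $\Delta_j(t^c), \Delta_j(t^{c'})$ is irreducible, but coprimality of two distinct irreducibles still requires they not be associates; this follows because their degrees are $c\cdot\deg\Delta_j$ and $c'\cdot\deg\Delta_j$ respectively, which differ when $c \neq c'$, so they cannot be associate. (If $\deg\Delta_j = 0$ then $K_j$ is algebraically slice, excluded by strong irreducibility as noted after Theorem \ref{main B}.) Once this is in hand everything else is a direct application of the machinery already developed, so I would present the argument compactly, citing Theorem \ref{main B}'s proof for the repeated $\rho^{(0)} = 0$ step rather than re-deriving it.
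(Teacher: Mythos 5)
Your proposal is correct and matches the route the paper intends: translate each cable's invariant via Remark \ref{cable 1}, apply the obstruction at complexity $1$ (Theorem \ref{loc obs}, equivalently Theorem \ref{main obstruction} with $c=1$), use additivity (Proposition \ref{additivity}), and then evaluate each summand with Proposition \ref{localization property}, exactly mirroring the proof of Theorem \ref{main B}. The paper disposes of the corollary with the single sentence that it ``immediately follows from Remark \ref{cable 2} in the exactly same way,'' so there is no written proof to compare against line by line, and your write-up is a faithful unpacking of that sentence.

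The one place where you go genuinely beyond what the paper says --- and where you are right to flag it as ``really the only nontrivial point'' --- is the case of the \emph{same} companion $K_j$ appearing with two different cabling parameters $c\neq c'$. The hypotheses of Theorem \ref{main B} give strong coprimality only for \emph{distinct} $\Delta_i,\Delta_j$, so coprimality of $\Delta_j(t^c)$ and $\Delta_j(t^{c'})$ is not literally one of the listed assumptions and must be argued. Your argument is correct: strong irreducibility makes both $\Delta_j(t^c)$ and $\Delta_j(t^{c'})$ irreducible, and they cannot be associates because their breadths are $c\cdot\deg\Delta_j$ and $c'\cdot\deg\Delta_j$, which differ since $c\neq c'$ and $\deg\Delta_j>0$ (strong irreducibility rules out a unit Alexander polynomial, as a unit is not irreducible; this is consistent with the paper's remark that strong irreducibility excludes the algebraically rationally slice case). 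Two nonassociate irreducibles in the UFD $\Qt$ are coprime, so Proposition \ref{c,p-anisotropy condition 2}(b) applies and Proposition \ref{localization property}(a) kills those terms, while Proposition \ref{localization property}(b) keeps the distinguished term nonzero. Everything else --- slice in $B^4$ implies strongly rationally slice, $\rho^{(0)}=0$ from finite order in $\AC_\Q$, and running over each distinguished pair $(j_0,c_0)$ --- is as in the paper.
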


\noindent We remark that Corollary \ref{cable 3} can be restated in slightly stronger way by replacing $\C$ with the \textit{strong rational concordance group}. The latter means the quotient of $\C$ by strongly rationally slice knots.

\section{Examples and Remark on $(c, 1)$-cable}\label{s-5}
Recall that the $n$-twist knot is the positive Whitehead double of the unknot with $n$ twists, illustrated in Figure \ref{twist knot}. In this section, we prove Theorem \ref{main C} by applying our rational slice obstruction Theorem \ref{main B} to $n$-twist knots which are of order 2 in the algebraic rational concordance group $\AC_\Q$. Let $K_n$ be such an $n$-twist knot and $\Delta_n$ be its Alexander polynomial. Since they are algebraically torsions, their zeroth order signatures $\rho^{(0)}(K)$ vanish by Theorem \ref{thm: zeroth}. Davis \cite{Dav12a, Dav12b} computed their first order signature $\rho^{(1)}(K_n)$ except for finitely many (thirty nine) $n$. The remaining thing is just to show that there exist infinitely many such $K_n$'s which satisfy the required condition on $\Delta_n$ in Theorem \ref{main B}.

\begin{theorem}\label{alg order} \cite[Lemma 7.1]{Dav12a} (cf. \cite{Lev69})
$K_n$ is order $2$ in $\AC$ if and only if there exist positive integers $a$ and $b$ such that $n=a^2-a+b^2$ but there does not exist an integer $m$ such that $n=m^2-m$.
\end{theorem}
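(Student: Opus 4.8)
The plan is to exploit that $[K_n]$ has order exactly $2$ in $\AC$ if and only if $K_n$ is not algebraically slice while $K_n\#K_n$ is, and to analyze the two halves by elementary arithmetic of the Seifert form of $K_n$. First I would record the standard data: $K_n$ bounds a genus one surface whose Seifert form gives $\Delta_n(t)\doteq nt^2-(2n+1)t+n$, with discriminant $4n+1$, and whose symmetrization has negative determinant for $n\ge 0$, so $\sigma(K_n)=0$; since the zeros of $\Delta_n$ are real, positive and $\ne\pm1$ they lie off the unit circle, so all Levine–Tristram and Milnor signatures of $K_n$ vanish and $[K_n]$ lies in the torsion subgroup of $\AC$, all of whose elements have order dividing $4$ by Levine's classification. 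By the Fox–Milnor condition over $\Q$ (equivalently, metabolicity of the rank-two Seifert form), a direct computation shows $K_n$ is algebraically slice precisely when $4n+1$ is a perfect square, i.e.\ when $n=m^2-m$ for some integer $m$ (as $4(m^2-m)+1=(2m-1)^2$). This identifies the second displayed condition with ``$K_n$ is not algebraically slice''.

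Next, assuming $4n+1$ is not a square, I would decide when $2[K_n]=0$ by passing to the rational Blanchfield form. As $\Delta_n$ is then irreducible, $E:=\Q[t^{\pm 1}]/(\Delta_n)$ is a field; the relation $t+t^{-1}=(2n+1)/n$ identifies $E$ with $\Q(\sqrt{4n+1})$ and shows that the Alexander involution $t\mapsto t^{-1}$ acts as the nontrivial element of $\operatorname{Gal}(E/\Q)$, i.e.\ as conjugation $x\mapsto\overline x$. The Blanchfield form of $K_n$ is therefore a nonsingular rank one hermitian form $\langle\alpha\rangle$ over $(E,\ \overline{\phantom{x}})$ with $\alpha\in\Q^\times$, so that of $K_n\#K_n$ is $\langle\alpha,\alpha\rangle$ on the two–dimensional $E$–vector space $E^2$. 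A rank-two hermitian form over a field is metabolic iff it is isotropic, and $\alpha x\overline x+\alpha y\overline y=0$ with $(x,y)\ne 0$ forces $y\ne 0$ and $N_{E/\Q}(x/y)=-1$, while conversely any norm-$(-1)$ element yields an isotropic line. Hence $K_n\#K_n$ is algebraically slice $\iff$ $-1$ is a norm from $\Q(\sqrt{4n+1})$.

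Finally I would convert the norm condition into the stated arithmetic. Write $d=4n+1=d_0e^2$ with $d_0$ squarefree (and odd). Then ``$-1\in N_{E/\Q}(E^\times)$'' is equivalent to the ternary equation $a^2+c^2=d_0b^2$ having a nontrivial integer solution, and by Legendre's theorem on ternary quadratic forms this holds iff $-1$ is a quadratic residue mod $d_0$, i.e.\ iff every prime dividing $d_0$ is $\equiv 1\pmod 4$, i.e.\ iff $d_0$ — equivalently $4n+1$ — is a sum of two squares. Since $4n+1$ is odd, any representation $4n+1=A^2+B^2$ has $A$ odd and $B$ even; the degenerate case $B=0$ is exactly ``$4n+1$ is a square'' (already excluded), and otherwise $A=2a-1,\ B=2b$ with $a,b\ge 1$ and $n=a^2-a+b^2$. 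Combining the three steps: $[K_n]$ has order $2$ in $\AC$ iff $4n+1$ is a sum of two squares but not a perfect square, iff there exist positive integers $a,b$ with $n=a^2-a+b^2$ but no integer $m$ with $n=m^2-m$.

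The main obstacle is the middle step — the exact criterion for $K_n\#K_n$ to be algebraically slice. Two structural inputs make it work: Levine's theorem that $\AC$ has no torsion beyond order $4$ (so a nonzero torsion class has order $2$ or $4$), and the reduction of algebraic sliceness to metabolicity of the Blanchfield form, which, because $\Delta_n$ is irreducible, takes place over the \emph{field} $E$ on a one–dimensional module — so that the doubled form's metabolicity collapses to the single condition $-1\in N_{E/\Q}(E^\times)$, independent of the unknown scaling class $\alpha$. One can instead search directly for a rank-two rational metabolizer of $\bigl(\begin{smallmatrix}-1&1\\0&n\end{smallmatrix}\bigr)^{\oplus 2}$, but that reduces to the same equation $a^2+c^2=(4n+1)b^2$, so Legendre's theorem is used either way.
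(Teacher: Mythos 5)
The paper cites this statement to Davis (Lemma 7.1 of \cite{Dav12a}, in turn following \cite{Lev69}) and gives no proof of its own, so there is no in-paper argument to compare against; I can only assess your derivation, which is correct and self-contained.

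Your route is the natural one: reduce to Levine's criterion, observe that the vanishing of all Levine--Tristram signatures (roots of $\Delta_n$ off the unit circle) forces torsion, use metabolicity of the genus-one Seifert form to identify algebraic sliceness with ``$4n+1$ is a square,'' and then, for $4n+1$ not a square, pass to the hermitian form over $E=\Q(\sqrt{4n+1})$ to turn ``$K_n\#K_n$ algebraically slice'' into the norm equation $-1\in N_{E/\Q}(E^\times)$, which Legendre's theorem translates into ``$4n+1$ is a sum of two squares,'' i.e.\ $n=a^2-a+b^2$. Two small points worth making explicit. First, the passage from rational to integral metabolicity is harmless here because $V$ is nondegenerate for $n\ge 1$: a rational Lagrangian $L\subset \Q^{4}$ yields $L\cap\Z^{4}$, a pure rank-two summand on which $V\oplus V$ vanishes, so ``metabolic over $\Q$'' and ``metabolic over $\Z$'' coincide, and one may equally well work with the (rational) Blanchfield/hermitian form as you do. Second, the reduction of metabolicity of $\langle\alpha,\alpha\rangle$ to the single norm condition uses that $\alpha$ lies in the fixed field $\Q$ of the involution and hence scales out; stating this isolates why the criterion is independent of the specific Blanchfield pairing. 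With those remarks, your argument matches the arithmetic content of the cited lemma.
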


\begin{proposition}\label{strong irr}\cite{BD12}
\begin{itemize}
\item [(a)] $\Delta_n$ and $\Delta_m$ are strongly coprime for distinct integers $n$ and $m$.
\item [(b)] $\Delta_n$ is strongly irreducible if $n$ is neither $m(m-1)$ nor $m^k$ for $k>1$ for some $m$.
\end{itemize}
\end{proposition}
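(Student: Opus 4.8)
The plan is to pin down the Alexander polynomial and then argue by elementary algebraic number theory. Up to units $\Delta_n(t)=nt^2-(2n+1)t+n$, whose roots are $\alpha_n^{\pm1}$ with $\alpha_n=\dfrac{\sqrt{4n+1}+1}{\sqrt{4n+1}-1}$, so that $\alpha_n\alpha_n^{-1}=1$ and $\alpha_n+\alpha_n^{-1}=2+\tfrac1n$. Since $4n+1$ is a perfect square exactly when $n=m(m-1)$ (write $4n+1=(2j+1)^2$), in the remaining case $K:=\Q(\alpha_n)=\Q(\sqrt{4n+1})$ is a real quadratic field and $\Delta_n$ is irreducible over $\Q$. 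The technical core is the prime factorization of the ideal $(\alpha_n)$ in $K$: writing $\eta=\sqrt{4n+1}$, one has $\alpha_n=\tfrac{\eta+1}{\eta-1}$ and $(\eta-1)(\eta+1)=(4n)$, and I would show that every rational prime $\ell\mid n$ splits in $K$, say $(\ell)=\mathfrak l\overline{\mathfrak l}$, that $\tfrac{\eta\pm1}{\ell}\notin\mathcal O_K$, and hence that exactly one of $\mathfrak l,\overline{\mathfrak l}$ appears in $(\eta-1)$ and the other in $(\eta+1)$, with the net result $(\alpha_n)=\prod_{\ell\mid n}\big(\mathfrak l\,\overline{\mathfrak l}^{-1}\big)^{\pm v_\ell(n)}$, a product over distinct prime ideals.

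For (b): by Capelli's theorem $\Delta_n(t^c)$ is irreducible over $\Q$ iff $t^c-\alpha_n$ is irreducible over $K$, which by the classical binomial criterion holds iff $\alpha_n\notin(K^\times)^p$ for every prime $p\mid c$ and, when $4\mid c$, $\alpha_n\notin-4(K^\times)^4$. The last condition is automatic because $\alpha_n$ is totally positive (each real embedding sends it to $\tfrac{\sqrt{4n+1}\pm1}{\sqrt{4n+1}\mp1}>0$) while $-4(K^\times)^4$ is totally non-positive. If $\alpha_n=\gamma^p$ then $(\alpha_n)=(\gamma)^p$, so the factorization above forces $p\mid v_\ell(n)$ for all $\ell\mid n$, i.e.\ $n$ is a perfect $p$-th power. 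Thus if $n$ is neither of the form $m(m-1)$ nor a perfect power, $\alpha_n\notin(K^\times)^p$ for all primes $p$, and $\Delta_n(t^c)$ is irreducible for every $c$.

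For (a): let $n\ne m$ (both $\ge1$; the case $n=0$ or $m=0$ is trivial since $\Delta_0\doteq1$), and suppose $\Delta_n(t^c)$ and $\Delta_m(t^d)$ share a root $\beta$. Then $\beta^c$ is a root of $\Delta_n$ and $\beta^d$ of $\Delta_m$, so $\beta^{cd}=\alpha_n^{\pm d}=\alpha_m^{\pm c}$; the mixed-sign possibilities are excluded since $\alpha_n,\alpha_m>1$, leaving $\alpha_n^d=\alpha_m^c$ with $c,d\ge1$. If $4n+1$ or $4m+1$ is a square, one finishes by unique factorization in $\Q$ (the rational root has the shape $\tfrac{q+1}{q}$ in lowest terms, and $s^e-r^e=1$ is impossible in positive integers for $e\ge2$), or — when the other root is a quadratic irrational — by observing $\alpha_m^c\in\Q$ together with $\overline{\alpha_m}=\alpha_m^{-1}$ forces $\alpha_m^{2c}=1$, contradicting $\alpha_m>1$; the same Galois argument settles the case $\Q(\sqrt{4n+1})\ne\Q(\sqrt{4m+1})$, since then $\alpha_n^d=\alpha_m^c$ lies in the intersection $\Q$. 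In the remaining case $K:=\Q(\sqrt{4n+1})=\Q(\sqrt{4m+1})$, cancelling $\gcd(c,d)$ gives $\alpha_n^d=\alpha_m^c$ with $\gcd(c,d)=1$; comparing the factorizations of $(\alpha_n)^d$ and $(\alpha_m)^c$ from the first paragraph forces $n$ and $m$ to have the same prime divisors and $d\,v_\ell(n)=c\,v_\ell(m)$ for all $\ell$, hence $n=k^c$, $m=k^d$ for some integer $k\ge2$. Choosing $s,t$ with $sc+td=1$ and setting $\gamma=\alpha_n^s\alpha_m^t\in K^\times$ yields $\alpha_n=\gamma^c$ and $\alpha_m=\gamma^d$; then with $y=\tfrac12(\gamma+\gamma^{-1})>1$ and the Chebyshev polynomials $T_j$ we get $T_c(y)=1+\tfrac1{2k^c}$ and $T_d(y)=1+\tfrac1{2k^d}$, and since $j\mapsto T_j(y)$ is strictly increasing on $(1,\infty)$ the inequality $c<d$ would give $k^d<k^c$, a contradiction. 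So $n=m$, which proves (a).

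I expect the main obstacle to be the ideal factorization of $(\alpha_n)$ used throughout — in particular verifying that every prime dividing $n$ splits in $\Q(\sqrt{4n+1})$ and that no rational prime divides $\eta\pm1$ in $\mathcal O_K$ — where the prime $2$ and the two possible shapes of $\mathcal O_K$ (according to $4n+1\bmod 8$) require separate bookkeeping; once that is established, both parts follow cleanly from Capelli's theorem, total positivity of $\alpha_n$, and monotonicity of Chebyshev polynomials.
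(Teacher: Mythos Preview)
The paper does not prove this proposition; it is quoted from Bullock--Davis \cite{BD12} without argument, so there is nothing in the present paper to compare your proof against.

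That said, your approach is correct and is the natural route (and close in spirit to what Bullock--Davis do): reduce part~(b) via Capelli's criterion to showing $\alpha_n\notin(K^\times)^p$ for primes $p$, and read this off from the prime-ideal factorization of the fractional ideal $(\alpha_n)$ in $K=\Q(\sqrt{4n+1})$; handle part~(a) by reducing a common root to an identity $\alpha_n^{\,d}=\alpha_m^{\,c}$ and again comparing factorizations. The verification you flag as the main obstacle goes through without surprises: for odd $\ell\mid n$ one has $4n+1\equiv1\pmod\ell$ a nonzero square with $\ell\nmid D$, so $\ell$ splits, and $\mathfrak l$ cannot divide both $\eta\pm1$ since their difference is $2$; if $2\mid n$ then $4n+1\equiv1\pmod8$, so $2$ splits as well; and if $n$ is odd then $(2)$ is inert and divides each of $\eta\pm1$ to order exactly one, hence cancels in $(\alpha_n)$. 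This yields $(\alpha_n)=\prod_{\ell\mid n}(\mathfrak l\,\overline{\mathfrak l}^{\,-1})^{\pm v_\ell(n)}$ as you claim, and both parts then follow as you outline. Your Chebyshev-polynomial finish in the final case of~(a) is a nice touch; an equivalent one-line alternative is to note that $n=k^c<k^d=m$ forces $\alpha_n>\alpha_m$ (since $x\mapsto\tfrac{x+1}{x-1}$ is decreasing), while $\alpha_n^{\,d}=\alpha_m^{\,c}$ with $c<d$ and $\alpha_m>1$ forces $\alpha_n<\alpha_m$.
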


\noindent Since the order $2$ cases in Theorem \ref{alg order} may include some perfect power cases, we should exclude those to exploit strong irreducibility.
\begin{proposition} \label{infinitely exist}
There are infinitely many positive integer $n$ such that $K_n$ is of order 2 in $\AC$ and $\Delta_n$ is strongly irreducible. In particular, such $K_n$ has order $2$ in $\AC_\Q$.
\end{proposition}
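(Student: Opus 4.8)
The plan is to produce an explicit infinite family rather than an abstract density argument. I would take
\[
  n = p(2p-1) = 2p^2 - p,
\]
with $p$ ranging over the odd primes, and verify the two required properties — order $2$ in $\AC$ and strong irreducibility of $\Delta_n$ — directly from Theorem \ref{alg order} and Proposition \ref{strong irr}(b). The only point that needs thought is that the set $\{m(m-1):m\in\Z\}$ must be avoided for \emph{both} criteria, while strong irreducibility additionally forbids perfect powers; the virtue of the choice $n=p(2p-1)$ is that primality of $p$ rules out perfect powers automatically, and parity of $p$ rules out the values $m(m-1)$ by a congruence.

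First, for order $2$ in $\AC$: taking $a=b=p$ gives $n = p^2 - p + p^2$, so the representability half of Theorem \ref{alg order} holds. For the other half I would show $n \neq m^2 - m$ for every integer $m$. Since $n = m^2 - m$ is equivalent to $4n+1 = (2m-1)^2$, it suffices to check that $4n+1 = 8p^2 - 4p + 1$ is not a perfect square; being odd, it could only be an odd square, hence $\equiv 1 \pmod 8$, whereas for $p$ odd one computes $8p^2 - 4p + 1 \equiv 5 \pmod 8$. Thus $K_n$ has order $2$ in $\AC$ by Theorem \ref{alg order}.

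Next, for strong irreducibility of $\Delta_n$ I would invoke Proposition \ref{strong irr}(b), which requires that $n$ be neither of the form $m(m-1)$ nor a perfect power $m^k$ with $k>1$. The first is exactly what was shown in the previous paragraph. For the second, note that $p \mid n$ while $p^2 \nmid n$, since $2p-1 \equiv -1 \pmod p$; if $n = m^k$ with $k\ge 2$, then $p\mid m$, forcing $p^k\mid n$ and in particular $p^2\mid n$, a contradiction. Hence $\Delta_n$ is strongly irreducible, and since there are infinitely many odd primes this produces infinitely many such $n$.

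Finally, for the last assertion, strong irreducibility of $\Delta_n$ implies, via the generalized Fox--Milnor condition, that $K_n$ is not algebraically rationally slice, so its class in $\AC_\Q$ is nontrivial; since $2[K_n]=0$ in $\AC$ maps to twice the class of $K_n$ in $\AC_\Q$, that class is a nontrivial $2$-torsion element, hence of order exactly $2$. The main obstacle here is only the number-theoretic bookkeeping of finding one family that simultaneously evades the values $m(m-1)$ and the perfect powers; the choice $n=p(2p-1)$ with $p$ an odd prime handles both, and no further difficulty is anticipated.
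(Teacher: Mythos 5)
Your proof is correct and follows essentially the same strategy as the paper: exhibit an explicit infinite one-parameter family of $n$ and verify the two criteria from Theorem \ref{alg order} and Proposition \ref{strong irr}(b) by elementary arithmetic, concluding via the generalized Fox--Milnor condition. The paper uses $n = 36k^2 - 6k + 4$ for $k$ odd (representability with $a=6k$, $b=2$, a mod-$3$ obstruction to $m(m-1)$, and the $2$-adic valuation to rule out perfect powers), while you use $n = p(2p-1)$ for $p$ an odd prime (with $a=b=p$, a mod-$8$ obstruction via $4n+1$, and the $p$-adic valuation); both families work equally well.
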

\begin{proof}
Let $k$ be any odd integer and $n = 36k^2-6k+4$. Then $n=6k(6k-1) + 4 \equiv 1$ mod $3$, but any number of the form $m(m-1)$ is congruent to either $0$ or $2$ mod $3$. Thus, $K_n$ has order 2 in $\AC$. Moreover, $n=2(18k^2-3k+2)$ has a factor $2$ with multiplicity $1$ so that it cannot be any perfect power. Thus, $\Delta_n$ is strongly irreducible so that, by generalized Fox-Milnor condition, $K_n$ is not algebraically rationally slice.
\end{proof}
\noindent Now we are ready to prove Theorem \ref{main C}.
\begin{proof}[Proof of Theorem \ref{main C}]
  Choose infinitely many $K_n$ obtained from Proposition \ref{infinitely exist}. They are of order $2$ in $\AC_\Q$. Their Alexander polynomials are strongly irreducible, and pairwisely strongly coprime by Proposition \ref{strong irr}. For such $n$'s with finitely many exceptions, Davis showed that $\rho^{(1)}(K_n)\neq 0$ in \cite{Dav12a, Dav12b}. Thus, the conditions in Theorem \ref{main B} for $\{K_i\}$ to generate $\Z^\infty$ in $\C_\Q$ are satisfied.
\end{proof}

\noindent On the other hand, when $n=m^2$, $K_n$ is of order $2$ in $\AC$ while it is of order $1$ in $\AC_\Q$ \cite{BD12}. Indeed, $\Delta_n(t^2)$ splits into symmetric pairs. When $m > 8$, $\rho^{(1)}(K_n)\neq 0$ as in \cite{Dav12a, Dav12b}. By Theorem \ref{CHL-Q}, such $K_n$ is not rationally slice. Nevertheless, we cannot conclude that it is of infinite order in $\C_\Q$ since $K_n$ for such $n=m^2$ is not $(2, \Delta_n(t^2))$-anisotropic.

\begin{question*} Let $n = m^2$. Is $K_n$ of infinite order in $\C_\Q$? Even when $m \le 8$, is $K_n$ not rationally slice?
\end{question*}

\noindent Both answers for $n = m = 1$ are ``no" because $K_1$ is the figure-eight knot $4_1$, which is rationally slice.

On the other hand, we also obtain linear independence of their cable in the classical knot concordance group $\C$. In particular, we restate Corollary \ref{main D} in a more general way. Its proof is done by Corollary \ref{cable 3}.
\begin{repcorollary}{main D}
  $\{(K_n)_{c, 1}: K_n\text{ is of order }2 \text{ in }\AC_\Q\text{ with }\rho^{(1)}(K_n)\neq 0, c\in \Z_+\}$ generates an infinite rank subgroup in $\C$.
\end{repcorollary}

\noindent Corollary \ref{main D} reminds us of the following question, a stronger version of \cite[Question 3]{Miy94}, asked by Kang-Park \cite{KP22}.
\begin{question*}
  If $K$ is not slice, then does $K_{c, 1}$ have infinite order in $\C$?
\end{question*}

The twist knots $K_n$ in Corollary \ref{main D}, which are not algebraically rationally slice, give a partially positive answer for above question. In particular, such cable $(K_n)_{c, 1}$ is of infinite order in $\C$. For other cases, in particular, the case when $n=1$, it is hard to tell that its cable has infinite order in $\C$ since $K_1 = 4_1$ itself is not of infinite order in $\C$. It turned out that $(4_1)_{c, 1}$ for odd $c > 1$ are linearly independent in smooth category \cite{HKPS22} (see also \cite{KP22}). This case is especially important because those cables are members of infinite order in the kernel of $\psi:\C\rightarrow \C_\Q$. However, there is no known analogue in the topological category. Question \ref{q3} is asked here again with more specific candidates:

\begin{question*}
Is $(4_1)_{\text{odd}, 1}$ of infinite order in $\C$?
\end{question*}

On the other hand, when $c$ is even, $(4_1)_{c, 1}$ is strongly rationally slice \cite{Kaw80}. When $c$ is $2n$ for any odd $n$, it is known that they have infinite order in smooth category \cite{DKMPS22, KPT24} (see also \cite{ACMPS23}). In particular, they are examples of strongly rationally slice but not smoothly slice. It still remains unknown whether they are topologically slice or not. Even if $\rho$ can be used to obstruct a knot from being topologically slice, our obstruction is far from being able to obstruct topological sliceness of $(4_1)_{c, 1}$.

\begin{question*}
Does there exist a strongly rationally slice but not topologically slice knot?
\end{question*}

\section{Remark on BPH-slice knots}\label{s-6}
In this section, we consider smooth category. A knot $K$ is called \textit{H-slice} in a closed, smooth, orientable 4-manifold $W$ if $K\subset S^3=\bdry(W-B^4)$ bounds a smoothly embedded nullhomologous disk in $W-B^4$. $K$ is said to be $0$\textit{-positive (negative, resp)} if it is H-slice in a simply-connected positive (negative, resp) definite $W^+$ ($W^-$, resp). The following facts are well known:
\begin{theorem} \cite{CHH13}
If $K$ is 0-positive, then
\begin{itemize}
\item [(a)] the signature $\sigma (K) \le 0$,
\item [(b)] the Levine-Tristram signature function $\sigma_K(\omega) \le 0$,
\item [(c)] Ozsváth-Szabó's $\tau(K) \ge 0$.
\item [(d)] Ozsváth-Szabó's correction term $d(S^3_{p/q}(K), i)\le d(L(p, q), i)$.
\end{itemize}
If $K$ is 0-negative, then the inequalities above are given in the opposite way.
\end{theorem}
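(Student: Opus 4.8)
This is the theorem of Cochran--Harvey--Horn \cite{CHH13}, and the plan is to recover their proof. It suffices to treat the $0$-positive case: the $0$-negative case follows by reversing the orientation of the ambient $4$-manifold (equivalently, by mirroring $K$), which reverses the sign of each of $\sigma(K)$, $\sigma_K(\omega)$, $\tau(K)$, and of the correction terms involved. So assume $K \subset S^3 = \bdry(W^+ - B^4)$ bounds a smoothly embedded nullhomologous disk $D$ in $W^+ - B^4$, with $W^+$ a closed, simply connected, positive-definite $4$-manifold.

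\emph{The auxiliary $4$-manifold.} First I would excise an open tubular neighborhood of $D$ and set $Z = (W^+ - \mathring B^4) - \nu D$. Since $D$ is a properly embedded disk with $\bdry D = K$ and $[D] = 0$, a Mayer--Vietoris computation gives $\bdry Z = S^3_0(K)$, $H_1(Z;\Z) \cong \Z$ generated by the meridian $\mu_K$, and an intersection form on $H_2(Z;\Z)/\mathrm{tors}$ that is still positive definite of rank $b_2(W^+)$; the analogous construction on an unknotted disk in $B^4$ produces the standard $S^1 \times D^3$ with $\bdry(S^1\times D^3) = S^3_0(U) = S^1 \times S^2$. Attaching to $\bdry Z$, along the dual knot of the $0$-surgery, the linear chain of $2$-handles prescribed by the continued-fraction expansion of $p/q$ then gives, for each $p/q > 0$, a $4$-manifold $X_{p/q}$ with $\bdry X_{p/q} = S^3_{p/q}(K)$, with $H_1(X_{p/q})$ torsion, and with intersection form (semi-)definite of the same sign as that of $W^+$; performing the identical attachments on $S^1 \times D^3$ produces the corresponding standard filling $Y_{p/q}$ of $L(p,q) = S^3_{p/q}(U)$.

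\emph{The Floer-theoretic and signature bounds.} For (d), feed $X_{p/q}$ and $Y_{p/q}$ into Ozsv\'ath--Szab\'o's correction-term inequality for (semi-)definite $4$-manifolds bounding rational homology spheres: after matching $\mathrm{Spin}^c$ structures and boundary orientations through the surgeries, the positive-definiteness of $W^+$ forces $d(S^3_{p/q}(K), i) \le d(L(p,q), i)$ for every $i$. Inequality (c) then follows either from (d) via the known relation between $\tau(K)$ and the correction terms of large surgeries on $K$, or directly from the fact that $\tau$ bounds below the genus of a nullhomologous surface representing $K$ in a positive-definite $4$-manifold, applied with genus $0$; either way $\tau(K) \ge 0$. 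For (b), cap $Z$ along $S^3_0(K)$ by the $4$-manifold $W_K$ built from $B^4$ with a single $0$-framed $2$-handle along $K$ (so $\bdry W_K = -S^3_0(K)$), forming the closed manifold $X = Z \cup_{S^3_0(K)} W_K$, and equip it with the unitary character $\pi_1(X)\to H_1(X) \to \Z \xrightarrow{1\mapsto \omega} U(1)$. Novikov additivity gives $\sigma(X) = \sigma(Z) + \sigma(W_K) = b_2(W^+)$ and $\sigma^\omega(X) = \sigma^\omega(Z) + \sigma^\omega(W_K)$; a classical computation identifies $\sigma^\omega(W_K)$ with $\sigma_K(\omega)$, while $|\sigma^\omega(X) - \sigma(X)|$ is controlled by the twisted second Betti number of $X$ and the twisted intersection form of $Z$ still has nonnegative signature because $Z$ was built from the positive-definite $W^+$ by removing a nullhomologous disk. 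Assembling these inequalities pins down the sign $\sigma_K(\omega) \le 0$, and evaluating at $\omega = -1$ yields (a), $\sigma(K) \le 0$.

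\emph{Main obstacle.} The Kirby-calculus and Mayer--Vietoris bookkeeping in building $Z$, $X_{p/q}$, and $X$, together with the invocations of the Ozsv\'ath--Szab\'o machinery in (c) and (d), are routine. The genuine work is the signature argument for (a) and (b): one must compute the $\omega$-twisted homology of the non-simply-connected $Z$ precisely enough to sign its contribution to $\sigma^\omega(X)$, set up the twisted analogue of Novikov additivity and the $G$-signature/Atiyah--Patodi--Singer estimate $|\sigma^\omega(X) - \sigma(X)| \le b_2(X;\CC_\omega)$ in this $4$-manifold-with-boundary setting, and make everything uniform over all $\omega \in S^1$, including at the finitely many jump points of $\sigma_K$.
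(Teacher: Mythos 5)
This theorem is not proved in the paper; it is quoted from Cochran--Harvey--Horn \cite{CHH13}, so the only thing to check is whether your reconstruction is sound. Parts (c) and (d) are fine in outline, but the signature argument for (a) and (b) has a genuine gap. Observe that your closed $4$-manifold $X = Z \cup_{S^3_0(K)} W_K$ is nothing other than $W^+$: writing $W^+ = B^4 \cup (W^+ \smallsetminus \interior B^4)$, removing the open tubular neighbourhood $\nu D$ from the second piece and regluing it onto $B^4$ exhibits $W^+ = W_K \cup_{S^3_0(K)} Z$. Since $W^+$ is simply connected, $H_1(X) = 0$ and the character $\pi_1(X)\to H_1(X) \to \Z \to U(1)$ is forced to be trivial, so $\sigma^\omega(X) = \sigma(X)$ tautologically and your estimate on $|\sigma^\omega(X)-\sigma(X)|$ says nothing. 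Worse, $W_K$ is itself simply connected (attaching a $2$-handle to $B^4$ kills nothing in $\pi_1$), so the character on $\pi_1(S^3_0(K))$ does not extend over $W_K$ and there is no twisted signature $\sigma^\omega(W_K)$ equal to $\sigma_K(\omega)$; any sensible interpretation gives $0$, not the Tristram--Levine signature. Your twisted Novikov additivity thus collapses to $0=0$ and cannot produce the inequality.

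The correct shape of the argument works with $Z$ alone: the character $\pi_1(S^3_0(K))\to\Z\to U(1)$, $\mu\mapsto\omega$, does extend over $Z$ because $H_1(Z)\cong\Z$ is generated by the meridian, and Atiyah--Patodi--Singer identifies $\sigma^\omega(Z)-\sigma(Z)$ with an invariant of $(S^3_0(K),\omega)$ that, away from roots of $\Delta_K$, equals $\sigma_K(\omega)$. The remaining step — exactly the sentence you flag as the real work but then assume, namely that $\sigma^\omega(Z)\le\sigma(Z)=b_2(W^+)$ — is not formal: the twisted Betti number $b_2(Z;\CC_\omega)$ can \emph{exceed} $b_2(Z;\Q)$ when $\omega$ is a root of $\Delta_K$, so one cannot simply bound $|\sigma^\omega(Z)|$ by the untwisted rank. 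The standard way to close this (\`a la Viro and Casson--Gordon, and in effect what \cite{CHH13} relies on) is to cap $D$ off with a pushed-in Seifert surface to get a nullhomologous closed surface in $W^+$, pass to prime-power cyclic branched covers, apply the $G$-signature theorem (the $[D]=0$ hypothesis kills the self-intersection correction term), obtain the inequality at prime-power roots of unity, and then deduce it for all $\omega$ by density and the known jump behaviour of $\sigma_K$. You need to either supply this branched-cover argument or give a genuine proof of the twisted-rank bound; as written, that step is asserted, not proved.
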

A knot $K$ is called \textit{$0$-bipolar} if it is both $0$-positive and $0$-negative. In such a particular case, Hom's $\epsilon(K) = 0$ \cite{CHH13}. Since $d(S^3_{p/q}(K), i) = d(L(p, q), i)$ for all coprime integers $p$ and $q$, $\nu^+(K)=0$ by \cite[Theorem 2.5]{NW15} and \cite[Proposition 2.3]{HW16}. It follows from $\nu^+(K)=0$ that $\Upsilon_K(t)=0$ by \cite[Proof of Theorem 1.1]{OSS17}. In summary, if $K$ is $0$-bipolar, then we have (see also \cite{CK21}):
\begin{equation*}
  \sigma(K) = \sigma_K(\omega) = \tau(K) = \epsilon (K) = \Upsilon_K(t)= \nu^+(K)=0.
\end{equation*}

On the other hand, a $0$-bipolar knot is called \textit{biprojective H-slice (or, BPH-slice)} if definite 4-manifolds $W^{\pm}$ in which $K$ is H-slice are specified as $n\mathbb{CP}^2$ and $n\overline{\mathbb{CP}^2}$. Recall the Rasmussen's $s$-invariant \cite{Ras10}. Recently it has turned out that:
\begin{theorem} \cite{MMSW23}
If $K$ is H-slice in $n\mathbb{CP}^2$ ($n\overline{\mathbb{CP}^2}$, resp) for some $n$, then $s(K)\ge 0$ ($\le 0$, resp). In particular, if $K$ is BPH-slice, then
\begin{equation*}
  s(K) = 0.
\end{equation*}
\end{theorem}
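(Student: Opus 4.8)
The plan is to upgrade Rasmussen's slice--genus bound for $s$ to an adjunction-type inequality valid for surfaces in $n\mathbb{CP}^2$, and then to specialize it to a nullhomologous disk. First I would present $X^\circ := n\mathbb{CP}^2 \setminus \mathrm{int}(B^4)$ as $B^4$ with $n$ two-handles attached along a $+1$-framed $n$-component unlink, so that a properly embedded surface $\Sigma \subset X^\circ$ with $\partial\Sigma = K$ can be isotoped into ``movie'' form relative to this handle decomposition: a finite sequence of elementary cobordisms in $S^3\times I$ interspersed with $n$ two-handle attachment steps. The target is a filtered map $F_\Sigma$ between suitable deformations of Khovanov homology (the Lee, or Bar--Natan, deformation) associated to this movie, whose effect on a canonical generator controls $s(K)$, exactly as in Rasmussen's original argument for surfaces in $B^4$.

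Next I would track the quantum filtration through $F_\Sigma$. Each ordinary cobordism piece shifts the filtration by its Euler characteristic, as usual. The new input is the contribution of a two-handle step: attaching a $+1$-framed two-handle corresponds to a specific local modification of the cobordism, and the key point is that, over the Bar--Natan ground ring, this modification is filtered with a \emph{one-sided} shift precisely because the framing is positive (the opposite framing would give the opposite-sided shift). One also has to verify that $F_\Sigma$ carries the canonical generator for the unknot to a nonzero element mapping to the canonical generator for $K$; this nonvanishing uses connectedness of $\Sigma$ together with the module structure of Lee/Bar--Natan homology. Assembling the shifts yields an inequality of the shape $s(K) \ge -\chi(\Sigma) + 1 + c([\Sigma])$, where the correction term $c([\Sigma])$ is determined by the class $[\Sigma] \in H_2(X^\circ,\partial X^\circ)$ and vanishes when $[\Sigma] = 0$.

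Specializing gives the theorem: if $K$ is H-slice in $n\mathbb{CP}^2$ then $\Sigma$ can be taken to be a nullhomologous disk, so $\chi(\Sigma) = 1$ and $c([\Sigma]) = 0$, whence $s(K) \ge 0$. The statement for $n\overline{\mathbb{CP}^2}$ follows at once by mirroring, since $\overline{K}$ is H-slice in $n\mathbb{CP}^2$ exactly when $K$ is H-slice in $n\overline{\mathbb{CP}^2}$ and $s(\overline{K}) = -s(K)$. Finally, if $K$ is BPH-slice it is H-slice in both $n\mathbb{CP}^2$ and $n\overline{\mathbb{CP}^2}$, so $s(K) \ge 0$ and $s(K) \le 0$, forcing $s(K) = 0$.

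The main obstacle is the middle step: building the deformed theory so that a $+1$-framed two-handle attachment induces a genuinely filtered, nonzero map with the correct one-sided shift, and ruling out that the homology class $[\Sigma]$ could be absorbed so as to cheat the bound. Equivalently, this is the work of defining the four-manifold invariant attached to $X^\circ$ (or the corresponding skein-lasagna class) and computing it for $n\mathbb{CP}^2$; once that is in hand, the remainder is bookkeeping with gradings.
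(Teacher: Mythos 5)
The paper does not prove this theorem itself; it is quoted with a citation to [MMSW23], so there is no in-paper argument to compare your sketch against. That said, your outline does capture the strategy of that reference faithfully: express $n\mathbb{CP}^2 \setminus \mathrm{int}(B^4)$ as $B^4$ with $n$ two-handles attached along a $(+1)$-framed unlink, realize a properly embedded surface in that handlebody as a movie of elementary link cobordisms interspersed with two-handle steps, run the induced map on the Lee/Bar--Natan deformation, and track the quantum filtration. The heart of the matter, which you correctly identify as the obstacle, is that the $(+1)$-framed two-handle step is filtered with only a \emph{one-sided} shift; in [MMSW23] this takes the concrete form of a filtered-degree bound for the Bar--Natan cobordism map induced by a positive full twist on the balanced collection of strands passing through the cocore. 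The mirroring step for $n\overline{\mathbb{CP}^2}$ and the combination for BPH-slice knots are correct as you wrote them.

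Two small points. First, your general inequality has a sign slip: it should read $s(K) \geq \chi(\Sigma) - 1 + c([\Sigma])$ rather than $s(K) \geq -\chi(\Sigma) + 1 + c([\Sigma])$. For a disk both expressions vanish, so the conclusion $s(K)\ge 0$ is unaffected, but as written your bound would become \emph{stronger} as the genus of $\Sigma$ grows, which cannot be right. Second, while you present the skein--lasagna formulation as an equivalent packaging, [MMSW23] proceeds by the more direct cobordism-map computation rather than by evaluating a lasagna invariant of $n\mathbb{CP}^2$; the lasagna computations are a separate, later development. Neither point affects the correctness of your derivation of the stated theorem.
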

\noindent By works of Donaldson \cite{Don83} and Freedman \cite{Fre82}, every simply-connected positive definite smooth 4-manifold is homeomorphic to $n\mathbb{CP}^2$ while it is not known if there exists an exotic $n\mathbb{CP}^2$ or $n\overline{\mathbb{CP}^2}$. Therefore, it is natural to ask the following questions:
\begin{question*}\label{BPH question 1} Does there exist a 0-bipolar but not BPH-slice knot?
\end{question*}
\noindent If such a knot exists, then it implies that there exists a simply-connected closed exotic definite 4-manifold. For existence of non-simply-connected case, see \cite{LLP23, SS23}. The notion of H-slice and BPH-slice has been introduced recently. H-slice knots can be used to detect an exotic indefinite 4-manifolds in gauge theoretic way \cite{MMP24}, and BPH-slice knots are used to construct a potential exotic $n\mathbb{CP}^2$, including $S^4$ for the case when $n=0$ \cite{MP23}. Such homotopy $n\mathbb{CP}^2$'s are shown to be standard \cite{Nak23}.
 
It is interesting to compare the set of rationally slice knots and the set of BPH slice knots. We make a remark that every known examples of rationally slice knots are BPH-slice. On the other hand, not all BPH-slice knots are rationally slice knots (see \cite{CHH13}). As a corollary of Theorem \ref{main C}, we can even take twist knots as such examples:
\begin{corollary}
There are infinitely many BPH-slice twist knots that generate $\Z^\infty$ subgroup of $\C_\Q$.
\end{corollary}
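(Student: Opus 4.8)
The plan is to deduce this immediately from Theorem~\ref{main C} together with the BPH-sliceness of nonnegative twist knots recorded in the introduction. First I would recall that the proof of Theorem~\ref{main C} runs through Proposition~\ref{infinitely exist}, which produces infinitely many \emph{positive} integers $n$ — explicitly $n = 36k^2 - 6k + 4$ for odd $k$ — such that $K_n$ has order $2$ in $\AC_\Q$, the Alexander polynomials $\Delta_n$ are strongly irreducible and pairwise strongly coprime, and, by Davis's computations \cite{Dav12a, Dav12b}, $\rho^{(1)}(K_n) \neq 0$ for all but finitely many of them. Discarding the finitely many exceptions and applying Theorem~\ref{main B}, this collection of twist knots is linearly independent in $\C_\Q$, hence generates a subgroup isomorphic to $\Z^\infty$.

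The second and only remaining step is the bookkeeping observation that each such $n$ satisfies $n \ge 0$, so that $K_n$ is $0$-bipolar by \cite{CHH13} and therefore BPH-slice by \cite{MP23}. Thus the linearly independent generating family just produced consists entirely of BPH-slice twist knots, which is exactly the assertion of the corollary.

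I do not anticipate any real obstacle here: the content is precisely that the knots already exhibited for Theorem~\ref{main C} lie in the range $n \ge 0$ where BPH-sliceness is known, and that Theorem~\ref{main B} delivers a genuine \emph{set} of twist knots — rather than merely connected sums of them — that is linearly independent in $\C_\Q$. The one mild point worth flagging is to insist on the explicit family of Proposition~\ref{infinitely exist}, since that is what simultaneously guarantees positivity of $n$ (needed for $0$-bipolarity) and the full list of hypotheses of Theorem~\ref{main B}.
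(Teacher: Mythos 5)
Your proposal is correct and follows exactly the route the paper intends: the family from Proposition~\ref{infinitely exist} has $n = 36k^2 - 6k + 4 \ge 0$, so each $K_n$ is $0$-bipolar by \cite{CHH13} and hence BPH-slice by \cite{MP23}, while Theorem~\ref{main C} gives linear independence in $\C_\Q$. The paper states this as an immediate corollary of Theorem~\ref{main C} without a separate proof, and your bookkeeping observation about positivity of $n$ is precisely the implicit step.
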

Note that if $K$ is rationally slice, then:
\begin{equation*}
  \sigma(K) = \sigma_K(\omega) = \tau(K) = \epsilon (K) = \Upsilon_K(t)= \nu^+(K) = 0.
\end{equation*}
\noindent On the other hand, it is not known if the Rasmussen's $s$-invariant vanishes for rationally slice knots. We end this article with the following question. 
\begin{question*}
Does there exist a rationally slice but not BPH-slice knot?
\end{question*}

\bibliographystyle{amsalpha}
\bibliography{ref}

\end{document}